\theoremstyle{plain}
\newtheorem{thmI}{Theorem}[chapter]
\newtheorem{thm}{Theorem}[section]
\newtheorem{cor}[thm]{Corollary}
\newtheorem{lem}[thm]{Lemma}
\newtheorem{prop}[thm]{Proposition}
\theoremstyle{definition}
\newtheorem{Q}{Question}
\newtheorem{defn}[thm]{Definition}
\newtheorem{rem}[thm]{Remark}
\newtheorem{rems}[thm]{Remarks}
\theoremstyle{Example}
\newtheorem{ex}[thm]{Example}
\begin{document}
\nocite{*}

\title{\textsc{\Large Combinatorial geometry of flag domains in $G/B$}}

\author{\textsc{\Large Dissertation}\\ \ \\ \ \\
zur Erlangung des Doktorgrades\\
 der Naturwissenschaften\\
an der Fakult\"at f\"ur Mathematik\\
der\\ \ \\
\textsc{\Large Ruhr-Universit\"at Bochum} \\ \ \\ \ \\
vorgelegt von\\ \ \\
\textsc{\Large Faten Said Qarmout}\\
n\`ee Abu-Shoga \\ \ }

\date{im \\ August 2017}

\sloppy
\parindent0ex
\maketitle
\pagenumbering{roman}


\null \vspace{\stretch{1}}
\begin{flushright}
To everyone who wished the best for me.~~~~~~~~~~~~~~~~~~~~~~~~~~~~~~~~~
\end{flushright}
\vspace{\stretch{5}}\null

 \newpage
\chapter*{Acknowledgement}
"Be a scientist, if you can not,  be a student, if you can not, love scientists, if not, don't hate them"

\bigskip\noindent

It is a great honor to thank everyone who supported me in my scientific career and helped me overcome the obstacles and challenges that have always faced me.
I would like to express my thanks to God Almighty, who has enlightened my life and made me choose science to be my path.

\bigskip\noindent

Undertaking this Ph.D. has been a truly life-changing experience for me and it would not have been possible to do without the support and guidance that I received from my supervisors Prof. Dr. Peter Heinzner and Prof. Dr. Alan Huckleberry. I take the opportunity to send rivers of thanks to them for giving me the golden opportunity to come to Germany and complete my Ph.D. study.

\bigskip\noindent
I would like to express the deep appreciation of Professor Peter Heinzner for granting me first knowledge of the theory of Lie group and complex geometry. Thank you very much for directing many useful and interesting seminars on transformation groups and for making my time as a graduate student at the Ruhr University of Bochum very fruitful. I would also like to thank you for your support and assistance in passing all the challenges that I have encountered during my studies, and forced me to work hard and to complete my research.

\bigskip\noindent

I would like to express the deep appreciation to Professor Alan Huckleberry for his guidance and encouragement during my studies.  He has been actively interested in my work and has always been available to advise me. Thank you for forcing me to work more and more and think in different ways. It would have been very hard for me to understand combinatorial geometry and complete my research without your observations and advice and your deep knowledge of cycle spaces.

\bigskip\noindent

Besides my advisers, I would like to thank PD. Dr. Stéphanie Cupit-Foutou who supported me in mathematical discussions and also fortunately by talking about things other than my research. Also I would like to thank all my colleagues in the Transformation Groups seminar.

\bigskip\noindent

I gratefully acknowledge the funding received towards my Ph.D. from the Islamic Development Bank-The Merit Scholarship- during the first $3$ years in my study. I am also grateful to the funding received from the Ruhr University of Bochum.

\bigskip\noindent

I would also like to thank my parents for their wise advice and sympathetic ear. They were always with me.  I send my thanks to the candles that illuminate my life, my parents, my brother, my sister, my husband and my children, Ahmed and Mohammed.


\tableofcontents{}

\pagenumbering{arabic}

 \setcounter{page}{1}

\chapter{Introduction}
\section{Preliminaries}
Let $G$ denote a complex semisimple Lie group with real form $G_0$ and $Z:=G/Q$
be a $G$-flag manifold. It is known that $G_0$ has only finitely many orbits in $Z$ and
that there is a unique closed orbit $\gamma^{cl}$. The open $G_0$-orbits are referred to as
\emph{flag domains} $D$.  Every maximal compact subgroup $K_0$ of $G_0$ has a unique
orbit $C_0$ in $D$ which is a complex submanifold.  This is the unique $K_0$-orbit  of minimal
dimension in $D$.  If $K$ is the corresponding (reductive) subgroup of $G$, then it can also be
characterized as the unique $K$-orbit which is contained in $D$.  Having fixed $K_0$, we
refer to $C_0$ as the associated \emph{base cycle}. For the above and other background information see (\cite{FHW} and \cite{W1}).

\bigskip\noindent
The main goal of our work can be formulated as that of determining the class of $C_0$ in the
(topological) cohomology ring of $Z$.  If $B$ is any Borel subgroup of $G$
and $r:=dim_\mathbb{C}C_0$, this amounts to determining the intersection numbers
of the $r$-codimensional $B$-Schubert varieties $S$ with $C_0$. Of course it is possible that
this intersection is empty.

\bigskip\noindent
 The original motivation
for this project was another:  If $B$ is maximally real in the sense that it fixes a point in $\gamma^{cl}$,
or equivalently if it contains the $A_0N_0$-factor of an Iwasawa-decomposition $G_0=K_0A_0N_0$,
and $S$ is a $B$-Schubert variety (we refer to such as Iwasawa-Schubert varieties) of complementary dimension to the cycle $C_0$, then $S\cap D$ is
contained in the open $B$-orbit $\mathcal{O}$ in $S$ and the intersection $\mathcal{O}\cap C_0$
is finite and transversal at each of its points (see \cite{FHW}).  In fact each component $\Sigma$ of $\mathcal{O}\cap D$
intersects $C_0$ in exactly one point $z_\Sigma $ with $\Sigma =A_0N_0.z_\Sigma $.  This in turn gives
rise to an optimal Barlet-Koziarz trace-transform from holomorphic functions on $\mathcal{O}\cap D$
to holomorphic functions on the appropriately defined space $\mathcal{C}(D)$ of cycles (see \cite{BK}, \cite{FHW}).  Thus, although the determination of
the $r$-codimensional Iwasawa-Schubert varieties which have non-empty intersection with $D$, along with their
points of intersection with $C_0$, is apparently a problem of a combinatorial nature, such information has complex analytic
significance.

\bigskip\noindent
Since $S \cap D \neq \emptyset$ implies that $S \cap C_0 \neq \emptyset$, the first goal of this project is to determine which Schubert varieties $S$ have nonempty intersection with $C_0$. After doing so, we then describe this intersection in the case where $S$ is of complementary dimension to $C_0$. The Schubert varieties are determined by the elements of the Weyl group $W_I$ of a distinguished maximal torus $T_I$ in the Borel subgroup $B_I$ which fixes a certain base point in $\gamma_{c\ell}$. These Schubert varieties are denoted by $S_w$ where $w \in W_I$. Consequently our results in are formulated in terms of conditions on elements $w$ in the Weyl group $W_I$.

\bigskip\noindent
In this thesis we make several restrictions. First, we only consider the case where $G$ is a classical group,
i.e., associated to one of the Lie algebras $A_n$, $B_n$, $C_n$ or $D_n$.  The case of $A_n$ (and all of its real
forms) was handled by Brecan (\cite{Bre},\cite{Bre2}). Our work here is therefore devoted to the cases of $B_n$, $C_n$ and $D_n$. Furthermore, we restrict to the setting where $Z=G/B$ is the $G$-manifold of full flags.  This is reflective of our work with the advantage that technical complications are minimized. The case of a general $G$-flag manifold $Z=G/Q$ is handled in author paper (\cite{F-S}).

\bigskip\noindent

 \section{The structure of the thesis}
As indicated above this thesis is devoted to study the combinatorial geometry of the flag domains in $G/B$ of the real forms $SP(2n,\mathbb{R}),SO^*(2n),$ $SO(p,q)$ and $SP(2p,2q)$, where $B$ is Borel subgroup. We study the intersection between the base cycles of the flag domains and the Schubert varieties. In the cases of $SP(2n,\mathbb{R})$ and $SO^*(2n)$ a remarkable
unicity appears so that an exact description of the Weyl elements can be given. Analogous to the case of $SU(p,q)$ in the
work of Brecan, except for a few interesting special cases, for $SO(p,q)$ and $SP(2p,2q)$ these elements are described by
algorithms.  In all cases the number of intersection points with $C_0$ is explicitly computed.

\bigskip\noindent

The work here is organized in six chapters which are briefly described as follows:

\bigskip\noindent

\textbf{Chapter $2$:} In this chapter we explain the formulas for all real forms $G_0$ which arise in this thesis. In each case two base points are introduced: a base point $F_I$ in $\gamma_{c\ell}$ and a base point $F_S$ in a certain standard cycle. These two base points are full flags associated to bases which define the maximal Tori $T_I$ and $T_S$ respectively. The Borel subgroup $B_I$ and the Weyl group $W_I$ are described in terms of the ordered basis which defines $F_I$. Also, we define a special way of writing the Weyl group elements for the Lie groups of type $B_n,C_n$ and $D_n$ in terms of flags. In each case the flag domains and base cycles are described by signature invariants.

\bigskip\noindent

\textbf{Chapter $3$:} Here we prove two general fixed point theorems:
\begin{thmI}
For every $w \in W$, if the intersection $S_w \cap C_0 $ is non-empty, then it contains a $T_S$-fixed point.
\end{thmI}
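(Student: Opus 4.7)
The plan is to reduce the statement to the Borel fixed point theorem applied to a torus that acts on both $S_w$ and $C_0$.

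By definition $S_w$ is $B_I$-invariant, hence stabilized by the maximal torus $T_I \subset B_I$, while the cycle $C_0 = K \cdot F_S$ is $K$-invariant. A torus acting on both must therefore lie in $B_I \cap K$. Since $B_I$ is maximally real and contains the $A_0 N_0$-factor of an Iwasawa decomposition $G_0 = K_0 A_0 N_0$, the compact torus $T_I \cap K_0$ sits inside $B_I \cap K_0$, and its complexification $T \subseteq B_I \cap K$ acts simultaneously on $S_w$ and $C_0$. The first step of my proof is to identify, from the basis-level descriptions of $F_I$, $F_S$, $T_I$ and $T_S$ developed in Chapter 2, a torus $T$ with this property whose fixed points on $C_0$ coincide with those of $T_S$. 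In the most favorable setting one simply has $T_S \subset B_I$, in which case $T := T_S$ already works.

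Granted such a $T$, it acts on the intersection $S_w \cap C_0$. This intersection is closed in the projective variety $Z$, hence projective. Assuming it is nonempty, the Borel fixed point theorem applied to the connected solvable group $T$ produces a point $z \in S_w \cap C_0$ fixed by $T$. By the choice of $T$, the point $z$ is then also fixed by $T_S$, which gives the desired conclusion.

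The main obstacle lies in the first step, namely arranging a torus $T \subset B_I \cap K$ whose $C_0$-fixed locus agrees with $C_0^{T_S}$. This is a compatibility statement between the closed-orbit-adapted base point $F_I$ (together with the torus $T_I$ it determines) and the cycle-adapted base point $F_S$ (together with $T_S$), and it requires a case-by-case verification in the explicit bases set up for each of the real forms $Sp(2n,\mathbb{R})$, $SO^*(2n)$, $SO(p,q)$ and $Sp(2p,2q)$. Once this compatibility is in place, the Borel fixed point argument above yields the theorem uniformly.
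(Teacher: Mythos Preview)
Your reduction to the Borel fixed point theorem has a genuine gap at exactly the step you flag as ``the main obstacle'': for the split (or maximally split) real forms treated here, the group $B_I\cap K$ is \emph{zero--dimensional}, so there is no nontrivial torus to apply the fixed point theorem to. Concretely, take $G_0=Sp(2n,\mathbb{R})$. An element $g\in K$ preserves the splitting $V=E^+\oplus E^-$, while $g\in B_I$ must stabilize the line $\langle e_1-e_{2n}\rangle$. Writing $g(e_1)\in E^+$ and $g(e_{2n})\in E^-$ and comparing with $g(e_1-e_{2n})=\lambda(e_1-e_{2n})$ forces $g(e_1)=\lambda e_1$ and $g(e_{2n})=\lambda e_{2n}$; but the $K$--structure couples the action on $e_{2n}$ to that on $e_1$ via an inverse, which gives $\lambda^2=1$. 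The same happens at every step of the flag $F_I$, so $(B_I\cap K)^\circ=\{e\}$. (A dimension count confirms this: $\dim B_I+\dim K-\dim G=0$ for the split form.) In particular neither $T_S\subset B_I$ nor $T_I\subset K$ ever holds here; your ``most favorable setting'' does not occur. For the non--split forms $SO(p,q)$ and $Sp(2p,2q)$ with $p>q$ there \emph{is} a positive--dimensional torus $T\subset T_I\cap K$, but it acts trivially on the span of $e_1,\ldots,e_{2q}$, so its fixed locus on $C_0$ is strictly larger than $\mathrm{Fix}(T_S)$ and you would still need another argument to get down to a $T_S$--fixed point. A secondary issue: in this paper $S_w=B_I\cdot F_w$ denotes the Schubert \emph{cell}, which is only locally closed, so $S_w\cap C_0$ is not projective and Borel's theorem would in any case have to be applied to the closure.

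The paper's proof bypasses all of this by a direct ``Moving Lemma'': given any flag $F_0\in S_w\cap C_\alpha$, one writes down an explicit split basis for $F_0$ and then constructs, coordinate by coordinate, a continuous curve $\mathfrak{B}(t)$ of split bases that stays inside $S_w\cap C_\alpha$ and degenerates to a basis of $T_S$--eigenvectors. The construction uses the concrete description of the $B_I$--orbits of the basis vectors (the formulas in \S\ref{S3.2}) rather than any group acting on the intersection. This is what makes the argument go through uniformly, including in the split cases where no common torus is available.
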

\begin{thmI}
If $S_w$ is of complementary dimension to $C_0$, then $S_w \cap C_0 \subset Fix(T_S)$.
\end{thmI}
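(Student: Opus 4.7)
The plan is to strengthen the preceding theorem from "some intersection point is $T_S$-fixed" to "every intersection point is $T_S$-fixed," using the transversality of $S_w \cap C_0$ available in the complementary-dimension case.

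Let $z \in S_w \cap C_0$. By the Barlet--Koziarz structure recalled in the introduction, the complementary-dimension hypothesis forces $S_w \cap D \subset \mathcal{O}$, where $\mathcal{O}$ is the open $B_I$-orbit of $S_w$, and each $A_0 N_0$-component $\Sigma$ of $\mathcal{O} \cap D$ meets $C_0$ in exactly one point $z_\Sigma$, the intersection being transversal there. Consequently $S_w \cap C_0$ is a finite set.

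To conclude $z$ is $T_S$-fixed, my plan is to show $T_S \cdot z = \{z\}$. That $T_S \cdot z \subset C_0$ is immediate from $T_S \subset K$ and the $K$-invariance of $C_0$. If one can further show $T_S \cdot z \subset S_w$, then $T_S \cdot z$ is a connected subset of the finite set $S_w \cap C_0$, so $T_S \cdot z = \{z\}$ by connectedness of $T_S$. To establish the second containment I would use that $T_S$ normalizes $A_0 N_0$ (a compatibility between $T_S$ and $M = Z_{K}(A_0)$ in the Iwasawa decomposition), so $T_S$-translation permutes $A_0 N_0$-orbits in $Z$; combining this with the first theorem applied to the $T_S$-translates $t \cdot S_w$ and with the uniqueness $\Sigma \cap C_0 = \{z_\Sigma\}$ should pin the $T_S$-motion of $z$ to a single $A_0 N_0$-component, forcing it to be $\{z\}$.

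The main obstacle is precisely the step $T_S \cdot z \subset S_w$, since $T_S$ and $T_I$ are distinct maximal tori and $T_S$ does not preserve the $B_I$-Schubert variety $S_w$ globally. I expect that resolving this will either rely on the explicit coordinate descriptions of Chapter~2 to verify the claim case-by-case for $SP(2n,\mathbb{R})$, $SO^*(2n)$, $SO(p,q)$ and $SP(2p,2q)$, or will proceed via an infinitesimal transversality argument: for any $X$ in the Lie algebra of $T_S$, the fundamental vector field $X^*$ satisfies $X^*(z) \in T_z C_0$, and if one can independently argue $X^*(z) \in T_z S_w$ (e.g.\ by analyzing the behavior of $T_S$ on the Iwasawa stratification near $z$), then the transversality decomposition $T_z Z = T_z S_w \oplus T_z C_0$ forces $X^*(z) = 0$, yielding $z \in \mathrm{Fix}(T_S)$.
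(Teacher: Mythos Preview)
Your approach has a genuine gap at the step $T_S \cdot z \subset S_w$, and neither of your proposed resolutions closes it. The claim that $T_S$ normalizes $A_0 N_0$ is not correct here: $T_S$ is diagonal in the standard basis $(e_1,\ldots,e_{2n})$, whereas $B_I \supset A_0 N_0$ is upper-triangular in the basis $(e_1 - e_{2n}, e_2 - e_{2n-1}, \ldots)$ (taking the $Sp(2n,\mathbb{R})$ case for concreteness). Since $T_S$ does not stabilize the lines spanned by $e_i \pm e_{2n-i+1}$, it neither normalizes $B_I$ nor the real Iwasawa factor $A_0 N_0$, and there is no mechanism forcing $T_S$-translates of $z$ to remain in $S_w$. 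Your infinitesimal variant asks for $X^*(z) \in T_z S_w$, but this is exactly the infinitesimal form of the same unproved containment; ``analyzing the behaviour of $T_S$ on the Iwasawa stratification near $z$'' gives no independent handle on it, and the transversality decomposition $T_z Z = T_z S_w \oplus T_z C_0$ cannot be invoked until that containment is already known.

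The paper proceeds by an entirely different idea. Rather than moving $z$ by $T_S$ and trying to stay in $S_w$, it moves $z$ \emph{within} $S_w \cap C_\alpha$ along an explicit curve coming from $B_I$ (the Moving Lemma). Any point of $S_w \cap C_\alpha$ is represented by a \emph{split basis} $(\varepsilon_1,\ldots,\varepsilon_m)$ with each $\varepsilon_k \in E^+$ or $E^-$, obtained from $(b\cdot v_{w_1},\ldots,b\cdot v_{w_m})$ by a triangular change. Writing $b\cdot v_{w_j} = K_j + B_j$, where $K_j$ carries the contribution of a distinguished pair of standard vectors, one defines $b(t)\cdot v_{w_j} := t K_j + B_j$ and checks that the resulting $\varepsilon_k(t)$ remain split (so the curve stays in $C_\alpha$) while manifestly lying in $B_I\cdot F_w$ (so the curve stays in $S_w$). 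At $t=0$ one offending coefficient is killed; iterating, the endpoint is a flag of $T_S$-eigenvectors. This shows that every connected component of $S_w \cap C_\alpha$ contains a $T_S$-fixed point. In the complementary-dimension case the intersection is zero-dimensional, so each component is a single point, which must therefore be that fixed point. No compatibility between $T_S$ and $B_I$ is ever needed.
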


\bigskip\noindent
In the following chapters we focus on the goal of understanding the combinatorial geometry of the topological class $[C_0]$ by attempting to find precises formulas which answer the following question in each of the cases $B_n$, $C_n$ and $D_n$.
\begin{Q}
 What are the conditions on the Weyl elements $w$ which parametrize the Iwasawa-Schubert Varieties $S_w$ with non-empty intersection with the base cycle $C_0$?
\end{Q}
\begin{Q}
 What are the points of intersection? How many of these points do we have?
\end{Q}

\bigskip\noindent

\textbf{Chapter $4$:} This chapter is concerned with the case of the real form $SP(2n,\mathbb{R})$. In this case there is a certain Weyl element, which we refer to as a "super generous permutation". Since we are considering flag domains in $G/B$, all base cycles have the same dimension. In terms of a super generous permutation the main result of this chapter can be stated as follows.
\begin{thmI}
If $S_w$ is an Iwasawa-Schubert variety of complementary dimension to a base cycle $C_0$ and
$S_w\cap C_0\not=\emptyset $, then $w$ is the super generous permutation. Furthermore, the intersection $S_w\cap C_0$ with every base cycle consists of just one point.
\end{thmI}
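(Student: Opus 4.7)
The plan is to combine the two general fixed point theorems stated in Chapter~3 with the explicit description of $T_S$, $B_I$ and $W_I$ on the $SP(2n,\mathbb{R})$-side developed in Chapter~2. Since $S_w$ has complementary dimension to $C_0$, the second fixed point theorem forces $S_w\cap C_0\subset \mathrm{Fix}(T_S)$. In the full flag manifold $G/B$ the $T_S$-fixed points are precisely the flags $F_\sigma$ obtained by reordering the distinguished $T_S$-eigenbasis used to define the base point $F_S$, i.e.\ they are indexed by elements $\sigma\in W$. Hence I would reformulate the problem as: determine those $\sigma$ such that (i) $F_\sigma\in C_0$ and (ii) $F_\sigma\in S_w$, and then show that complementary dimension forces $w$ to coincide with a single, canonical $\sigma$.

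First I would analyze condition (i). In the $SP(2n,\mathbb{R})$ setting, the base cycle $C_0$ is the unique $K$-orbit in $D$ and is cut out combinatorially by the signature invariants of the symplectic form on each step of the flag. Writing $\sigma$ in the ``flag notation'' for $W_I$ introduced in Chapter~2, the condition $F_\sigma\in C_0$ becomes an explicit pairing condition: the basis vectors chosen at each level must respect the isotropy/signature pattern imposed by $K_0=U(n)$ acting on $\mathbb{C}^{2n}$ with the standard Hermitian/symplectic compatibility. I expect this to single out a very restrictive class of permutations $\sigma$, each of which can be read off the pairing of the symplectic basis.

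Next I would analyze condition (ii) using the standard Bruhat-type characterization of $T_S$-fixed points of the Iwasawa-Schubert variety $S_w\subset G/B_I$: $F_\sigma\in S_w$ iff $\sigma$ satisfies the rank inequalities (in the appropriate ordering induced by the real Iwasawa Borel $B_I$) imposed by the permutation matrix of $w$. Imposing complementary dimension $\mathrm{codim}\, S_w=\dim C_0$ constrains $w$ through the length function. The heart of the argument, and the main obstacle, is to show that the combined constraints (i)+(ii) together with the length constraint force $w$ to be precisely the \emph{super generous} permutation: I would proceed by showing that any admissible $\sigma$ satisfying (i) already has length at least that of the super generous permutation, and that the only $w$ of complementary codimension which can dominate such a $\sigma$ in the relevant order is itself that super generous permutation. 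This is where the special combinatorics of $C_n$ (the sign-reversal symmetry of $W_I$) is used to eliminate every other candidate.

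Finally I would verify unicity of the intersection point. Granted that $w$ is the super generous permutation, the fixed-point analysis above produces one explicit $\sigma$ satisfying both (i) and (ii); I would check that no other $\sigma$ does so, by noting that any competitor would either violate the signature pattern defining $C_0$ or would give rank numbers strictly below those of $w$, contradicting complementary dimension. Combined with the transversality of Iwasawa-Schubert/cycle intersections recalled in the introduction (each such intersection in $D$ consists of simple, transversal points), this yields $\#(S_w\cap C_0)=1$. The delicate part throughout is not the existence of the distinguished fixed point but the uniqueness/rigidity step that rules out all other Weyl elements, which I expect to carry out by an explicit length count in $W(C_n)$ together with the pairing structure of the symplectic basis.
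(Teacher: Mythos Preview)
Your plan has a conceptual gap at the step where you invoke a ``standard Bruhat-type characterization'' with rank inequalities to decide which $T_S$-fixed points $F_\sigma$ lie in $S_w$. The Schubert cells $S_w=B_I.F_w$ are defined relative to the Iwasawa torus $T_I\subset B_I$, whereas the fixed points you want to test are for the \emph{different} torus $T_S$; there is no off-the-shelf Bruhat order comparing elements $\sigma\in W_{T_S}$ with $w\in W_I$, and your phrase ``$w$ dominates $\sigma$'' is not well-posed without first building the bridge between the two Weyl groups (the map $\psi$ in Chapter~2) and then analyzing, orbit by orbit, which standard basis vectors can appear in $B_I$-images of the $F_I$-basis vectors. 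That explicit orbit analysis is precisely what the paper does, and it is not a length or rank-inequality argument.

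The paper's route is structurally different from what you outline. First it proves directly (Theorem~\ref{thmSP}) that $S_w\cap D_\alpha\neq\emptyset$ for \emph{some} $\alpha$ forces $w$ to be \emph{generous} (all entries negative). The mechanism is not a length bound on $\sigma$: it is that if some $w_j>0$ then $b.v_{w_j}$ lies in the span of the $h$-isotropic vectors $e_k-e_{2n-k+1}$, and splitting it into $E^+\oplus E^-$ would force two earlier split vectors to pair nontrivially under the symplectic form $b$, contradicting isotropy of the flag. Only after this does a length count enter, and it is a count \emph{inside the set of generous permutations}: for every generous $w$ one has $f(w)+n=\tfrac{n(n+1)}{2}$, so the unique generous $w$ of length $\tfrac{n(n+1)}{2}$ is the one with $L(\tilde w)=0$, namely $w_0=(-n,\ldots,-1)$. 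Finally, the single-point statement comes from writing the $\varepsilon$-basis for $S_{w_0}\cap C_\alpha$ and observing that, since $e_{n-k+1}$ and $e_{n+k}$ first appear in $b.\hat v_k$ with nonzero coefficient and never earlier, the $k$-th $T_S$-eigenvector is forced to be one of these two, the choice being dictated by $\alpha$. Your proposed endgame (``competitor $\sigma$ would violate the signature pattern or give rank numbers strictly below those of $w$'') again leans on the undefined cross-torus order and does not substitute for this explicit coefficient argument.
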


\bigskip\noindent
\textbf{Chapter $5$:} This chapter is devoted to study the real form $SO^*(2n)$ which is analogous to that for the case of the real form $SP(2n,\mathbb{R})$. Again a uniqueness theorem of the above type is valid, but in this case the results depend on $n$ being even or odd.

\bigskip\noindent
\textbf{Chapter $6$:} The real form of this chapter is $SO(p,q)$. The results are stated in terms of algorithms (See definitions \ref{de1}, \ref{de2} and Corollary \ref{cor12.4})
; in fact it seems impossible to avoid this. Also it would seem to be difficult to find a concrete formula for the intersection points  in the general case. We do however explicitly compute the numbers of intersection points (see Theorem \ref{thm5.18} and Corollary \ref{cor12.4}).

\bigskip\noindent
\textbf{Chapter $7$:} This chapter is devoted to the real form  $SP(2p,2q)$. Our work here is similar to that for $SO(p,q)$. Again the results are stated in terms of algorithms (Corollary \ref{cor6.17}, Definition \ref{majorD1} and Definition \ref{majorD2}) and again it would seem to be difficult to find a concrete formula for the intersection points  in the general case. We do, however, explicitly compute the numbers of intersection points (see Theorem \ref{8.4.5}).

\chapter{Basic ingredients}
The Lie subgroup $G_0$ is a real form of the classical group $G$ which is assumed to be
of type $B_n$, $C_n$ or $D_n$, and $Z$ is in each case the associated manifold
of full flags.  In each case we introduce appropriate bases of the relevant vector
space so that $G$, $G_0$ and the compact and reductive groups, $K_0$ and $K$,
are described in matrix terms which are useful for our purposes.  Regarded as flags,
these bases define base points in $Z$ along with Borel subgroups with distinguished
maximal tori.  Of particular importance is the base point $F_I$ in the closed $G_0$-orbit,
because the Iswasawa-Schubert cells which will be considered  are orbits of
its isotropy group $B_I$ of points $w(F_I)=:F_w$, where $w$ runs through the Weyl group
$W_I$ defined by the distinguished maximal torus $T_I$ in $B_I$.   Finally, in each of
the cases we describe the flag domains $D$ in $Z$ by signature invariants and the
base cycle $C_0$ in terms of splittings of $V$.
\section{Introduction to the flags in $Z$}
\bigskip\noindent

Let $G$ be a semisimple Lie group of type $B_n,C_n$ or $D_n$, defined in each case by a non-degenerate bilinear form $b(v,w)$.
 A maximally $b$-isotropic full flag $F$ with respect to $b$ is a sequence of $(m+1)$-vector spaces
$$F=(\{0\}=V_0 \subset V_1 \subset ... \subset V_{m}=\mathbb{C}^{m})$$
such that $dim~V_i=i,~for~all ~ 0\leq i\leq m$ and $V_{m-i}=V_{i}^{\bot} ,~\text{for} ~ 1\leq i\leq \left\lfloor  \frac{m}{2}\right\rfloor$,  where $m=2n$ or $2n+1$. In particular $V_i$ is isotropic for $1\leq i\leq n$. The set of all maximally isotropic flags is denoted by $Z$.
The manifold structure of $Z$ arises from the fact that the Lie group $G$ acts transitively on it with $Z=G/B$ where $B$ is the stabilizer of any particular maximally $b$-isotropic flag in $Z$.

\bigskip\noindent

In all cases $\sigma :\mathbb{C}^{2n}\to \mathbb{C}^{2n}$, $u\mapsto \bar{u}$, is standard complex conjugation and $G_0$ is defined to be the fixed set of $ \sigma $,i.e,
$$
G_0=\{g \in G: \sigma(g)=g\}.
$$

The signature of a flag $F =(0\subset V_1 \subset ... \subset V_{m}) \in Z$ with respect to $h$ consists of three sequences
$$a:0 \leq a_1 \leq a_2 \leq ... \leq a_{m} $$
 $$b:0 \leq b_1 \leq b_2 \leq ... \leq b_{m} $$
$$d:0 \leq d_1 \leq d_2 \leq ... \leq d_{m} $$
 with $sign(V_i)=(a_i,b_i,d_i)$ where $a_i$ (resp. $b_i$) denotes the dimension of a maximal negative (positive) subspace and $d_i$ the degeneracy $V_i$ of the restriction of $h$ to $V_i$ and $a_i+b_i+d_i=i~~\forall 1\leq i \leq 2n$. The 3-tuple $(a,b,d)$ is called the signature of the flag $F$. If $d\not=0$, we refer to $F$ as being non-degenerate and write $sign(F)=(a,b)$.\\


\section{The real form $Sp(2n,\mathbb{R})$}
\subsection{Preliminaries}\label{Sp}
In order to define the symplectic group $G=Sp(2n,\mathbb{C})$ we introduce $J_n$ to be $$J_n =\left(\begin{array}{ccccc}  0&0&\cdots &0 & 1\\0&0&\cdots &1 & 0\\\cdots &\cdots & \cdots&\cdots& \cdots\\0&1&\cdots &0 & 0\\
1 &0 &\cdots&0&0  \end{array}\right)$$
 and  $b(v,w)=v^t \left(\begin{array}{cc} 0 & J_n\\  -J_n & 0 \end{array}\right) w.$
Then $$Sp(2n,\mathbb{C}):=\{g \in GL(2n,\mathbb{C}):g^t \left(\begin{array}{cc} 0 & J_n\\  -J_n & 0 \end{array}\right)  g= \left(\begin{array}{cc} 0 & J_n\\  -J_n & 0   \end{array}\right) \}.$$  The Hermitian form $h(v,w)$ of the signature $(n,n)$ is defined by $$h(v,w)=b(v,\sigma (w))= \sum_{i=1}^n v_i \bar{w}_i -\sum_{i=n+1}^{2n} v_i \bar{w}_i\,$$
and the real form $G_0=Sp(2n,\mathbb{R})=Sp(2n,\mathbb{C}) \cap U(n,n)$.

\bigskip\noindent

Here a Cartan involution $\theta$ is given by
$$\theta:g\longrightarrow \left(\begin{array}{cc} I_n & 0\\ 0 & -I_n \end{array}\right) g  \left(\begin{array}{cc} I_n & 0\\ 0 & -I_n \end{array}\right)$$
and the maximal compact subgroup associated to it is
$$K_0=\{\left(\begin{array}{cc} U & 0\\ 0 & J_n (U^t)^{-1} J_n \end{array}\right):U \in U(n,\mathbb{C}) \}\approx U(n)$$

\subsection{Base points}

The standard basis $(e_1,e_2,...,e_{2n})$ of $\mathbb{C}^{2n}$ defines the standard maximal torus $T_S$ as the subgroup of diagonal matrices, i.e,
$$T_S=\{g \in  G_0:g=diag(t_1,t_2,...,t_n,-t_n,...,-t_2,-t_1), t_i \in \mathbb{C} \}$$
and the standard Borel subgroup $B$ of upper triangular matrices in $Sp(2n,\mathbb{R})$.

\bigskip\noindent

The standard basis defines a base point $F_S$ in $Z$ as the associated flag
$$
<e_1>\subset <e_1,e_2>\subset ... \subset <e_1,...,e_{2n}>
$$
Reorderings on this basis will  determine the flag domains, the base cycles and the intersection points.

\bigskip\noindent

In order to define a base point in $\gamma^{c\ell}$ we introduce the basis
\begin{equation}\label{basisAA}
e_1-e_{2n},e_2-e_{2n-1}...,e_n-e_{n+1},e_n+e_{n+1},...,e_2+e_{2n-1},e_1+e_{2n}
\end{equation}
with $T_I$ being the maximal torus of diagonal matrices in $G$ and $B_I$ the Iwasawa-Borel subgroup of $G$ which fixes the associated flag $F_I$.

\subsection{Weyl group}\label{Weyl}
If $(r_1,\ldots ,r_n,s_1,\ldots,s_n)$ is a basis with $b(r_i,r_j)=0$, $b(s_i,s_j)=0$ and $b(r_i,s_j)=\delta_{ij}$, then we regard $V=U\oplus U^*$ with $U=Span\{r_1,\ldots,r_n\}$ and $U^*=Span\{s_1,\ldots,s_n\}$.  For such a basis $b$ is in \emph{canonical form}. If $T$ is the maximal torus in $G$ defined by such a basis, then the Weyl group $W(T)$ acts on $T$ by certain permutations of the basis.  These are of the form $w=\pi E$ where $\pi $ is an arbitrary permutation of $(r_1,\ldots ,r_n)$ and $E$ exchanges an arbitrary number of the $r_i$ with the corresponding dual vectors $s_i$. If we have a maximally isotropic flag associated to some permutation of the ordered basis $(r_1,\ldots,r_n,s_n,\ldots,s_1)$, then in the first $n$ positions of the flag the above condition holds: if $r_i$ appears, then $s_i$ does not, and vice versa. Now the full flag is determined by the first $n$ positions; regard it as a permutation of the $r_i$ and, if $s_i$ occurs instead of an $r_i$, we mark that
spot with a minus sign.  In this way $W(T)$ is the semidirect product $ {\displaystyle S_{n}\ltimes \mathbb{Z}_2^{n}}$ of a symmetric group and a normal elementary abelian 2-subgroup. If the basis in question is that in (\ref{basisAA}) above, then we denote this Weyl group by $W_I$.  Analogously, if the basis in question is the standard basis $e_1,...,e_{2n}$, then we denote this Weyl group by $W_{T_S}$.

 \bigskip\noindent

In our particular case there are two maximal tori, the maximal torus $T_I$ defined by \ref{basisAA} and the maximal torus $T_S$ defined by $(e_1,e_2,...,e_{2n})$.
Recall that these two maximal tori $T_I, T_S$ are conjugate, and a conjugation induces an isomorphism $\psi$ of the associated Weyl groups.  In this case we define a bijective map $\psi(\mp (i))=\pm i,$ for all $1\leq i\leq n$ for later use.

\subsection{Flag domains and base cycles}\label{flag}

Since in every flag domain $D$ there is a unique
closed $K$-orbit, the associated base cycle $C_0$, in order to parametrize the flag domains it is enough it is enough to parametrize the
closed $K$-orbits, (see \cite{FHW} , $\S4.2$ and $\S4.3$).   For this we choose a base point $F_S\in Fix(T_S)$. It follows that the set
$\mathcal{C}$ of closed $K$-orbits can be identified with the Weyl group orbit
$W_G(T_S).F_S$ (see \cite{FHW}, Corollary 4.2.4).  As we have observed above, $W_G(T_S)\cong S_n\ltimes \mathbb{Z}_2^n$
where the $S_n$ factor can be identified with $W_K(T_S)$. In fact it is exactly the
stabilizer of the $K.F_S$ in $\mathcal{C}$.  Consequently $\mathcal{C}$ can be identified with
$\mathbb{Z}_2^n.F_S$. We regard an element of $\mathbb{Z}_2^n$ as a vector $\alpha $ of length $n$ consisting of plus- and minus-signs, and thus $\mathcal{C}$ can be identified with the set of such vectors. In concrete terms the $K$-orbits (resp. $G_0$-orbits ) of
base points $(\pm e_1,\ldots ,\pm e_n)$ are exactly the base cycles (resp. flag domains) in $Z$. The respective flag domains are denoted by $D_\alpha $.

\bigskip\noindent

Observe that $\alpha $ defines a non-degnerate signature $(a,b)$ and that $D_\alpha\subset D_{a,b}:=\{F:sign(F)=(a,b)\}$.
Conversely, given $F$ with $sign(F)=(a,b)$ we may choose a basis $(v,w):=(v_1,\ldots ,v_n,w_n,\ldots w_1)$ which defines
$F$ and which has the following properties:
\begin {enumerate}
\item
 $b$ is in canonical form.
\item
$(v,w)$ is $h$-orthogonal.
\item
$\Vert v_j\Vert^2_h=\pm 1$ with $sgn(\Vert v_j\Vert_h )=\alpha_j$.
\item
$\Vert w_j\Vert^2_h=-\Vert v_j\Vert^2_h$.
\end {enumerate}
If  $sign(F)=sign(\hat{F})$, then we choose bases $(v,w)$ and $(\hat{v},\hat{w})$ for  $F$ and $\hat{F}$, respectively, and
note that the transformation which takes the one basis to the other is in $G_0$.  This argument shows that
$D_{a,b}=D_\alpha $.

\bigskip\noindent

Denote $E_+=<e_1,...,e_n>$, and $E_-=<e_{n+1},...,e_{2n}>$, where $\mathbb{C}^{2n}=E_+ \oplus E_-$.
\begin{prop}\label{Base cycle}
For a fixed open orbit $D_{\alpha}$, the base cycle $C_{\alpha}$ is the set
$$C_{\alpha} =\{F \in Z : V_i= (V_i \cap E^-) \oplus (V_i \cap E^+) , 1 \leq i \leq 2n\}.$$
\end{prop}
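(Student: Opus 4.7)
Denote by $\mathcal{A}$ the set appearing on the right-hand side. Since the decomposition condition is not intrinsically tied to $\alpha$, I read the equality as $C_\alpha=\mathcal{A}\cap D_\alpha$. The argument rests on the fact that $K$ preserves the splitting $\mathbb{C}^{2n}=E^+\oplus E^-$: the matrix form $\mathrm{diag}(U,J_n(U^t)^{-1}J_n)$ of $K_0\cong U(n)$ shows that $K_0$, hence its complexification $K$, stabilizes $E^+$ and $E^-$ separately. I will prove both inclusions.

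For $C_\alpha\subseteq\mathcal{A}$, observe that $\mathcal{A}$ is $K$-invariant, since $g(V_i\cap E^\pm)=(gV_i)\cap E^\pm$ for every $g\in K$. A standard base point $F_\alpha\in D_\alpha$ is the flag built from an ordered signed basis $(\varepsilon_1 e_{i_1},\ldots,\varepsilon_n e_{i_n},\ldots)$ whose first $n$ terms lie in $E^+\cup E^-$ according to the pattern $\alpha$; each $V_i$ of $F_\alpha$ is then spanned by standard basis vectors and visibly decomposes along $E^+\oplus E^-$. Hence $F_\alpha\in\mathcal{A}\cap D_\alpha$, and since $C_\alpha=K.F_\alpha$ by the classification recalled in subsection~\ref{flag}, $K$-invariance yields $C_\alpha\subseteq\mathcal{A}$.

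For the reverse inclusion, given $F\in\mathcal{A}\cap D_\alpha$ I produce $g\in K$ with $g.F=F_\alpha$. Membership in $D_\alpha$ forces the signatures of $F$ at every level to match those of $F_\alpha$, so $\dim(V_k\cap E^+)=|\{j\le k:\alpha_j=+\}|$ and dually for $E^-$. Together with the $\mathcal{A}$-condition, this permits an inductive construction of a basis $(v_1,\ldots,v_n)$ of $V_n$ with $\langle v_1,\ldots,v_k\rangle=V_k$ and $v_k\in E^{\alpha_k}$: at step $k$, precisely the component indexed by $\alpha_k$ gains a dimension in $V_k$ over $V_{k-1}$. I then complete $(v_1,\ldots,v_n)$ to a $b$-canonical basis $(v_1,\ldots,v_n,w_n,\ldots,w_1)$ in which each $w_j$ is chosen in the opposite component $E^{-\alpha_j}$; the analogous basis for $F_\alpha$ has the identical $E^\pm$-pattern. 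The linear map sending one basis to the other then preserves $b$ (both bases are canonical) and each of $E^+,E^-$ (each basis vector has a definite component), hence lies in $K$. This exhibits $F\in K.F_\alpha=C_\alpha$, completing the reverse inclusion.

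\textbf{Main obstacle.} The delicate step is the choice of the duals $w_j$ inside the prescribed component $E^{-\alpha_j}$ satisfying both $b(v_i,w_j)=\delta_{ij}$ and $b(w_i,w_j)=0$. Because $V_n$ is maximally $b$-isotropic and $b|_{E^+\times E^-}$ is non-degenerate, an initial $b$-dual of $v_j$ exists in $E^{-\alpha_j}$; the full set of orthogonality relations is then achieved by a Gram--Schmidt-style iteration that shifts by vectors in $V_n\cap E^{-\alpha_j}$, whose existence is ensured by the $\mathcal{A}$-condition applied to $V_n$. Once this component-preserving orthogonalization is carried out, the identification of $\mathcal{A}\cap D_\alpha$ with the single $K$-orbit $C_\alpha$ is immediate.
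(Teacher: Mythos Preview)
Your argument is correct and essentially parallel to the paper's, but it is organized differently and is in one respect more careful. Both proofs show that the set $\mathcal{A}\cap D_\alpha$ of split flags with signature $\alpha$ is a single $K$-orbit through the base point $F_\alpha$. The paper argues transitivity by asserting a product decomposition $K_0=K_0^+\times K_0^-$ acting separately on flags in $E^+$ and $E^-$, and then invokes the characterization of $C_\alpha$ as the unique $K_0$-orbit in $D_\alpha$ which is a complex submanifold. You instead prove both inclusions directly: $C_\alpha\subseteq\mathcal{A}$ by $K$-invariance of the splitting condition, and $\mathcal{A}\cap D_\alpha\subseteq C_\alpha$ by explicitly building a $b$-canonical, $E^\pm$-adapted basis for an arbitrary split flag and observing that the change of basis lies in $K$.

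Your route is slightly longer but more self-contained: it does not appeal to the uniqueness theorem for the complex $K_0$-orbit, and it sidesteps the paper's product claim $K_0=K_0^+\times K_0^-$, which is not literally true here (for $Sp(2n,\mathbb{R})$ one has $K_0\cong U(n)$ acting diagonally on $E^+$ and $E^-$, not as a product). The point you isolate as the ``main obstacle''---choosing the dual vectors $w_j$ inside the correct component $E^{-\alpha_j}$---is exactly the place where the $Sp$-structure is used: since $E^+$ and $E^-$ are Lagrangian and paired by $b$, the duals automatically live in the opposite component, and the Gram--Schmidt correction stays there. Once that is noted, your construction goes through cleanly.
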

\begin{proof}
Firstly, it was shown above that there is a base point $F_\alpha $ with the splitting condition. Since $K_0$ is the product $K_0=K_0^+ \times K_0^-$, it acts transitively on all such flags. Let $F \in C_{\alpha}$ where $V_i^-=V_i \cap E^-$ and $V_i^+=V_i \cap E^+$. Define $F^+$ and $F^-$ to be the sets of maximal isotopic flags in $E^+$ and $E^-$ respectively. Recall that $K_0=K_0^+ \times K_0^-$ where $K_0^+$ and $K_0^-$ act transitively on the sets $F^+$ and $F^-$ respectively, which implies that $K_0$ acts transitively on the set of maximal isotropic flags obtained by put flags from $F^+$ and $F^-$ in away such that the new flag has signature $\alpha$. Hence $C_{\alpha}$ is a complex manifold. But $K_0$ has a unique orbit in $D_{\alpha}$ which is a complex manifold. Therefor $C_{\alpha}$ is the base cycle.
\end{proof}

\section{The real form $SO^*(2n)$}
\subsection{Preliminaries}
Let $G=SO(2n,\mathbb{C})$  be the special orthogonal group defined by the non-degenerate complex bilinear form $b:\mathbb{C}^{2n}\longrightarrow \mathbb{C}^{2n}$ where $b(v,w)=\sum_{i=1}^{2n} v_{2n-i+1}w_i$. The real form $G_0=SO^*(2n)$ is the subgroup of elements of $G$ which leave invariant the Hermitian form defined by $$h(v,w)=b(v,\bar{w})=\sum_{i=1}^n v_i \bar{w}_i -\sum_{i=n+1}^{2n} v_i \bar{w}_i .$$ This real form is the fixed point set of the involution $\tau:SO(2n,\mathbb{C})\longrightarrow SO(2n,\mathbb{C})$ defined by
$$\tau(g)=\left(\begin{array}{cc} 0 & J_n\\  J_n & 0 \end{array}\right)  \bar{g}^{-t} \left(\begin{array}{cc} 0 & J_n\\  J_n & 0 \end{array}\right) ,~~~where~J_n=\left(\begin{array}{ccccc}  0&0&\cdots &0 & 1\\0&0&\cdots &1 & 0\\\cdots &\cdots & \cdots&\cdots& \cdots\\0&1&\cdots &0 & 0\\
1 &0 &\cdots&0&0  \end{array}\right).$$
 Note that $SO^*(2n)\cong SO(2n,\mathbb{C}) \cap SU(n,n)$.\\
A Cartan involution $\theta:G_0\longrightarrow G_0$ is given by
$$\theta:g\longrightarrow \left(\begin{array}{cc} I_n & 0\\ 0 & -I_n \end{array}\right) g  \left(\begin{array}{cc} I_n & 0\\ 0 & -I_n \end{array}\right)$$
and the maximal compact subgroup associated to it is
$$K_0=\{\left(\begin{array}{cc} U & 0\\ 0 & -J_n (U^t)^{-1} J_n \end{array}\right): U \in U(n) \}\approx U(n)$$

\subsection{Base points}

By an argument which is completely analogous to that in the case of $Sp(2n,\mathbb{R})$ the standard basis $(e_1,e_2,...,e_{2n})$ of $\mathbb{C}^{2n}$ defines the standard maximal torus $T_S$, i.e,
$$T_S:=\{g \in  G_0:g=diag(t_1,t_2,...,t_n,-t_n,...,-t_2,-t_1), t_i \in \mathbb{C} \}$$
and the standard Borel subgroup $B$ of upper triangular matrices in $SO^*(2n)$.

\bigskip\noindent

The standard basis defines a base point $F_S$ in $Z$ as the associated flag
$$
<e_1>\subset <e_1,e_2>\subset ... \subset <e_1,...,e_{2n}>
$$
Reorderings on this basis will  determine the flag domains, the base cycles and the intersection points.

\bigskip\noindent

In order to define a base point in $\gamma^{c\ell}$ we introduce the basis
\begin{equation}\label{basis n even}
e_1+ie_{n+1},e_2+ie_{n+2}...,e_n+ie_{2n},e_n-ie_{2n},...,e_2-ie_{n+2},e_1-ie_{n+1},~~ \text{if~n~is~even};
\end{equation}
or
\begin{equation}\label{basis n odd}
e_1+ie_{n+1},...,e_n+ie_{2n},e_{\frac{n+1}{2}},e_{n+\frac{n+1}{2}},e_n-ie_{2n},...,e_1-ie_{n+1}, ~~\text{if~n~is~odd}.
\end{equation}
with $T_I$ being the maximal torus of diagonal matrices in $G$ and $B_I$ the Iwasawa-Borel subgroup of $G$ which fixes the associated flag $F_I$.
\subsection{Weyl groups, flags domains and base cycles}\label{Weyl2}
By arguments which are essentially the same as those in $\S \ref{Weyl}$, if
$T$ is a maximal torus defined by a basis
of $(r,s)$-type, then $W(T)=S_n\ltimes \mathbb{Z}_2^{n-1}$ where now the sign-change vector $\alpha $ has only an even number of minuses.
 As in the case of $Sp(2n,\mathbb{R})$, to describe the flag domains and their $K_0$-base cycles we use the Weyl group $W(T_S)$ of the standard torus $T_S$. The base point for $\alpha $ consisting only of pluses is the flag $F_S$ is defined by the standard basis.  Then
all base points are
$F_\alpha :=w(F_S)$ where $w\in \mathbb{Z}_2^{n-1}$ is associated to the sign-change vector $\alpha $.  As before the base cycles and flag domains are the orbits $K.F_\alpha $ and $G_0.F_\alpha $, respectively, and $\alpha $ defines a non-degenerate sign with $sign(F_\alpha)=(a,b)$ and then $D_\alpha =D_{a,b}$.

\bigskip\noindent

In our particular case there are two maximal tori, the maximal torus $T_I$ defined by (\ref{basis n even}) or (\ref{basis n odd}) and the maximal torus $T_S$ defined by $(e_1,e_2,...,e_{2n})$.
Recall that these two maximal tori $T_I, T_S$ are conjugate, and a conjugation induces an isomorphism $\psi$ of the associated Weyl groups.  In this case we define a bijective map $\psi(\mp (i))=\pm i,$ for all $1\leq i\leq n-1$ and $\psi(\pm n)=\pm n$.
.

\bigskip\noindent

Denote $E_+=<e_1,...,e_n>$, and $E_-=<e_{n+1},...,e_{2n}>$, where $\mathbb{C}^{2n}=E_+ \oplus E_-$. Again, by an argument which is completely  analogous to Proposition \ref{Base cycle} of the case $SP(2n,\mathbb{R})$, the base cycle $C_0$ of the flag domain $D_{\alpha}$ of $SO^*(2n)$ is the set
$$C_0 =\{F \in Z : V_i= (V_i \cap E^-) \oplus (V_i \cap E^+) , 1 \leq i \leq 2n\}.$$
which is convenient to use the orbit $\mathbb{Z}_2^{n-1}.F_S$ of the base point as the set of intersection point of $S_w \cap D_{\alpha}$. The intersection dimensions are determined by $\alpha $ (resp. $(a,b)$). See section \ref{flag} for more details.

\section{The real form $SO(p,q)$}

\subsection{Preliminaries }

Consider the semisimple Lie group $G=SO(m,\mathbb{C})$ where $m=2n$ or $2n+1$. In this case it is convenient to choose the bilinear form $b$ depending on the $p$ and $q$.  If $p$ or $q$ is even, then we choose it in the usual way:
$$b(v,w)=-\sum_{i=1}^{q} v_i w_i+ \sum_{i=q+1}^{m} v_i w_i$$
Let $\sigma$ be the complex conjugation $\sigma (v) = \bar{v}$, then the Hermitian form $h(v,w)$ of signature $(p,q)$ which defines the real form is defined by $$h(v,w)=b(v,\sigma (w))= -\sum_{i=1}^q v_i \sigma (w_i) +\sum_{i=q+1}^{m} v_i \sigma(w_i).$$
  If both $p$ and $q$ are odd, then the complex bilinear form is
$$b(v,w)=-\sum_{i=1}^{q-1} v_i w_i+(v_q w_{q+1}+v_{q+1} w_{q}) +\sum_{i=q+2}^{m} v_i w_i$$  and the Hermitian form $h(v,w)$ is defined by $$h(v,w)= -\sum_{i=1}^q v_i \bar{w}_i +\sum_{i=q+1}^{m} v_i \bar{w}_i.$$
The real form is $G_0=SO(p,q)=SO(2n,\mathbb{C}) \cap SU(p,q)$.
\bigskip\noindent

A Cartan involution $ \theta:\mathfrak{so}(p,q) \longrightarrow \mathfrak{so}(p,q)$ in the Lie algebra level is given by $\theta(g)=-g^t$, and the maximal compact subgroup associated to it is $K_0 := S(O(p) \times O(q))\subset S(U(p) \times U(q))$.

\subsection{Base points}\label{basepoint}
 In the following we define bases depending on $p$ and $q$ as well as the positive and negative spaces which will be essential for understand the base cycle.  The fundamental maximal torus $T_S$ is defined in each case by requiring that each basis vector is a $T_S$-eigenvector.

\begin{description}
	\item[$\bullet$ If $m=2n$ and $q$ is even: ] The ordered $b$-isotropic basis is
 \begin{eqnarray}\label{basis11}
e_1+ie_2,e_3+ie_4,......,e_{2n-1}+ie_{2n},e_{2n-1}-ie_{2n},......,e_1-ie_2,
 \end{eqnarray}
  where $E^-:=<e_1+ie_2,e_3+ie_4,...,e_{q-1}+ie_{q},e_1-ie_2,e_3-ie_4,...,e_{q-1}-ie_{q}>$ and \\$E^+:=<e_{q+1}+ie_{q+2},...,e_{2n-1}+ie_{2n},e_{q+1}-ie_{q+2},..., e_{2n-1}-ie_{2n}>$.
	
	\item[$\bullet$ If $m=2n$ and $q$ is odd: ] The ordered $b$-isotropic basis is
 \begin{eqnarray}\label{basis22}
e_1+ie_2,e_3+ie_4,......,e_{2n-1}+ie_{2n},e_q,e_{q+1},e_{2n-1}-ie_{2n},......,e_1-ie_2,
 \end{eqnarray}
 where  $E^-=<e_1+ie_2,e_3+ie_4,...,e_{q},e_1-ie_2,e_3-ie_4,...,e_{q-1}-ie_{q}>$ and\\ $E^+=<e_{q+1}, e_{q+2}+i e_{q+3},...,e_{2n-1}+ie_{2n},e_{q+2}-ie_{q+3},..., e_{2n-1}-ie_{2n}>$.\\
Note that the vectors $e_q$ and $e_{q+1}$ are isotropic and $b(e_q,e_{q+1})=-1$. Moreover $h(e_3,e_3)=-1$ and $h(e_4,e_4)=1$ which means that
$e_q \in E^-$ and $e_{q+1} \in E^+$.
\item[$\bullet$ If $m=2n+1$ and $q$ is even: ] The ordered $b$-isotropic basis is
 \begin{eqnarray}\label{basis3}
e_1+ie_2,e_3+ie_4,......,e_{2n}+ie_{2n+1},e_{q+1},e_{2n}-ie_{2n+1},......,e_1-ie_2,
 \end{eqnarray}
 where $E^-:=<e_1+ie_2,e_3+ie_4,...,e_{q-1}+ie_{q},e_1-ie_2,e_3-ie_4,...,e_{q-1}-ie_{q}>$ and\\ $E^+:=<e_{q+1},e_{q+2}+ie_{q+3},...,e_{2n}+ie_{2n+1},e_{q+2}-ie_{q+3},..., e_{2n}-ie_{2n+1}>$.

\item[$\bullet$ If $m=2n+1$ and $q$ is odd: ] The ordered $b$-isotropic basis is
 \begin{eqnarray}\label{basis4}
e_1+ie_2,e_3+ie_4,......,e_{2n}+ie_{2n+1},e_q,e_{2n}-ie_{2n+1},......,e_1-ie_2,
 \end{eqnarray}
 where $E^-=<e_1+ie_2,e_3+ie_4,...,e_{q},e_1-ie_2,e_3-ie_4,...,e_{q-2}-ie_{q-1}>$ and \\$E^+=<e_{q+1}+i e_{q+2},...,e_{2n}+ie_{2n+1},e_{q+1}-ie_{q+2},..., e_{2n}-ie_{2n+1}>$.

\end{description}

\begin{rem}
If $m=2n+1$ we have the vector $e_q$ or $e_{q+1}$ in the ordered $b$-isotropic basis.
In this case the vector sits in a fixed position in the middle of the basis and the terms at the beginning skip over $e_q$ or $e_{q+1}$.
\end{rem}

\bigskip\noindent

The above bases define the standard maximal Tours $T_S$ in each case as the subgroup of diagonal matrices, i.e,
$$T_S=\{g \in  G_0:g=diag(t_1,t_2,...,t_n,-t_n,...,-t_2,-t_1), t_i \in \mathbb{C} \}~~\text{ if $m=2n$},$$
or
$$T_S=\{g \in  G_0:g=diag(t_1,t_2,...,t_n,0,-t_n,...,-t_2,-t_1), t_i \in \mathbb{C} \}~~\text{ if $m=2n+1$}.$$

 For any of the ordered bases above denote by $F_S$ the associated flag in $Z$. Reorderings on these bases will  determine the flag domains, the base cycles and the intersection points.

\bigskip\noindent

Just as in the case of $T_S$, the maximal Torus $T_I$ is defined to have a certain basis of eigenvectors which depends on $m$ being odd or even.

\begin{itemize}
	\item If $m$ is even, then the basis is   \begin{eqnarray}\label{basis 1}
 e_1+e_{2q},...,e_q+e_{q+1},e_{2q+1}+ie_{2q+2},e_{2q+3}+ie_{2q+4},...,e_{2n-1}+ie_{2n},&&  \nonumber \\
 e_{2n-1}-ie_{2n},...,e_{2q+3}-ie_{2q+4},e_{2q+1}-ie_{2q+2},e_q-e_{q+1},...,e_1-e_{2q}.& &
\end{eqnarray}
\item If $m$ is odd, then the basis is   \begin{eqnarray}\label{basis 2}
e_1+e_{2q},e_2+e_{2q-1},...,e_q+e_{q+1},e_{2q+1},e_{2q+2}+ie_{2q+3},e_{2q+4}+ie_{2q+5},...,e_{2n}+ie_{2n+1},&& \nonumber \\
e_{2q+1},e_{2n}-ie_{2n+1},...,e_{2q+4}-ie_{2q+5},e_{2q+2}-ie_{2q+3},e_q-e_{q+1},...,e_2-e_{2q-1},e_1-e_{2q}~~~&&.
\end{eqnarray}
\end{itemize}

 and $B_I$ the Iwasawa-Borel subgroup of $G$ which fixes the associated flag $F_I$.

\subsection{Weyl groups, flag domains and base cycles}
Let us use shorthand notation for the bases used above.  By $(r,s)-form$ we mean a basis $(r_1,\ldots ,r_n,s_n,\ldots s_1)$ where
$b(r_i,r_i)=b(s_i,s_i)=0$ and $b(r_i,s_i)=\delta_{ij}$ for all $i$ and $j$. This occurs in the even-dimensional case.  By $(r,t,s)$-form, which occurs in
odd-dimensional case, the $r_i$ and $s_i$ satisfy the same conditions and $t$ is a single vector with $b(t,t)=\pm 1$ and $b(t,r_i)=b(t,s_i)=0$ for
all $i$.  Here we discuss the Weyl groups $W(T_S)$ and $W(T_I)$ by their actions on these bases.  If $T$ is either of these tori, then
it stabilizes the spaces spanned by $r_i$ and $s_i$ for $i=1,\ldots ,n$ for both kinds of bases.    In this we regard $T$ as a product
$T=T_1\cdot \ldots \cdot T_n$, in the second case acting trivially on the space spanned by $t$.  In both cases an arbitrary permutation in $S_n$ acting
diagonally on these spaces by $(r_i,s_i)\mapsto (r_{\pi(i)},s_{\pi(i)})$ normalizes the $T$-action and is in the given orthogonal group $G$.
A simple reflection $(r_i,s_i)\mapsto (s_i,r_i)$ for one such $i$, and being the identity on the other $2$-dimensional spaces, is as usual denoted by a sign
change.  Such normalizes $T$, but has negative determinant.  In the first case, this means that only an even number of sign changes is allowed.
In the second case we may couple the sign change with the map $t\mapsto -t$ so that this slightly modified sign change has positive determinant.
Thus in both cases we have the action of $S_n\ltimes \mathbb{Z}_2^n$ normalizing the $T$-representation on the basis at hand.  This is in fact
the action of the full Weyl group.

\bigskip\noindent

 As in the cases of $Sp(2n,\mathbb{R})$ and $SO^*(2n)$, to describe the flag domains and their $K_0$-base cycles we use the Weyl group $W(T_S)$ of the standard torus $T_S$. The base point for $\alpha $ is the flag $F_S$ defined by the standard basis.  Then
all base points are
$F_\alpha :=w(F_S)$ where $w\in \mathbb{Z}_2^{n-1}$ if $m=2n$ and $w\in \mathbb{Z}_2^{n}$ if $m=2n+1$ is associated to the sign-change vector $\alpha $.  As before the base cycles and flag domains are the orbits $K.F_\alpha $ and $G_0.F_\alpha $, respectively, and $\alpha $ defines a non-degenerate sign with $sign(F_\alpha)=(a,b)$ and then $D_\alpha =D_{a,b}$.

\bigskip\noindent

In our particular case there are two maximal tori, the maximal torus $T_I$ and the maximal torus $T_S$ which defined above.
Recall that these two maximal tori $T_I, T_S$ are conjugate, and a conjugation induces an isomorphism $\psi$ of the associated Weyl groups.  In this case the  bijective map  $\psi$ is described, depending on the case, as follows:
\begin{description}
	\item[$\bullet$ If $m=2n$ and $q$ is even: ] Define $W_{T_S}$ to be the Weyl group with respect to the basis (\ref{basis11}) and let $W_I$ be the Weyl group with respect to the basis \ref{basis 1}.
 In this case the bijective map between $W_I$ and $W_{T_S}$ is $\psi(\mp (2i-1))=\pm i, \psi(\mp 2i)=\pm(q-i+1)$ if $1\leq i\leq  \frac{q}{2}$ and $\psi(\pm i)= \pm i $ if $i>q$.

\item[$\bullet$ If $m=2n$ and $q$ is odd: ] Define $W_{T_S}$ to be the Weyl group with respect to  basis (\ref{basis22}) and let $W_I$ be the Weyl group with respect to the basis (\ref{basis 1}), then the bijective map in this case is
  $\psi:W_I\longrightarrow W_{T_o}$ defined by  $\psi(\mp (2i-1))=\pm i, \psi(\mp 2i)=\pm(q-i)$ if $1\leq i\leq  \frac{q-1}{2}$ and $\psi(\pm i)= \pm (i-1) $ if $i>q$ and $\psi(-q)=-n. ~$

\item[$\bullet$ If $m=2n+1$ and $q$ is even: ] Define $W_{T_e}$ to be the Weyl group with respect to  the basis (\ref{basis3}) and let $W_I$ be the Weyl group with respect to the basis (\ref{basis 2}), then we can define the bijective map between them to be $\psi(\mp (2i-1))=\pm i, \psi(\mp 2i)=\pm(q-i+1)$ if $1\leq i\leq  \frac{q}{2}$ and $\psi(\pm i)= \pm i $ if $i>q$.

\item[$\bullet$ If $m=2n+1$ and $q$ is odd: ] Define $W_{T_o}$ to be the Weyl group with respect to  basis (\ref{basis4}).
Let $W_I$ be the Weyl group with respect to the basis (\ref{basis 2}). The bijective map $\psi$ between $W_I$ and $W_{T_o}$ can be defined as  $\psi(\mp (2i-1))=\pm i, \psi(\mp 2i)=\pm(q-i)$ if $1\leq i\leq  \frac{q-1}{2}$ and $\psi(\pm i)= \pm (i) $ if $i>q$.
\end{description}


\bigskip\noindent

Moreover, through an argument quiet similar to Proposition $\ref{Base cycle}$, the base cycle $C_{\alpha}$ is the set
$$C_{\alpha} =\{F \in Z : V_i= (V_i \cap E^-) \oplus ( V_i \cap E^+) , 1 \leq i \leq m\}.$$
where $E^-$ and $E^+$ defined in $\S \ref{basepoint}$ and the intersection dimensions are determined by $\alpha $ (resp. $(a,b)$). The flag domains are parametrized by the signature of its base cycles.

 \section{The real form $Sp(2p,2q)$}

\subsection{Preliminaries}
Let $G:=Sp(2n,\mathbb{C})$ be the Lie group of complex linear transformations is defined in $\S \ref{Sp}$. We realize the real form $G_0=Sp(2p,2q)$ as a group of matrices $g \in Sp(2n,\mathbb{C})$ which leave invariant the Hermitian form $h(v,w)$ of the signature $(2p,2q)$ defined by
 $$
h(v,w)=-\sum_{i=1}^{q} v_i\bar{w}_i+\sum_{i=q+1}^{p+q} v_i\bar{w}_i+\sum_{i=p+q+1}^{2p+q} v_i\bar{w}_i-\sum_{i=p+2q+1}^{2n} v_i\bar{w}_i.
$$
such that $Sp(2p,2q)=Sp(2n,\mathbb{C}) \cap SU(2p,2q)$.
A Cartan involution $ \theta:\mathfrak{sp}(2p,2q) \longrightarrow \mathfrak{sp}(2p,2q)$ in the Lie algebra level is given by $$\theta (g)=-I_{q,p,p,q} \overline{g}^t I_{q,p,p,q},~~~where~I_{q,p,p,q}=\left(\begin{array}{cccc}
                                                             -I_q & 0&0&0\\
                                                             0 & I_p&0&0\\
							                                               0 & 0&I_p&0\\
                                                             0 & 0&0&-I_q\\
                                                            \end{array}\right)$$
																														
In particular, the maximal compact subgroup of $Sp(2p,2q)$ is $K_0 := Sp(2q) \times Sp(2p)$

\subsection{Base points}
Let $Z$ be the space of all maximally $b$-isotropic full flags. By an argument which is completely analogous to that in the case of $Sp(2n,\mathbb{R})$, the standard basis $(e_1,e_2,...,e_{2n})$ of $\mathbb{C}^{2n}$ defines a base point $F_S$ in $Z$ as the associated flag
$$
<e_1>\subset <e_1,e_2>\subset ... \subset <e_1,...,e_{2n}>
$$
Reorderings on this basis will  determine the flag domains, the base cycles and the intersection points.

\bigskip\noindent

The standard basis defines the standard maximal torus $T_S$, i.e,
$$T_S:=\{g \in  G_0:g=diag(t_1,t_2,...,t_n,-t_n,...,-t_2,-t_1), t_i \in \mathbb{C} \}$$
and the standard Borel subgroup $B$ of upper triangular matrices in $Sp(2p,2q)$.

\bigskip\noindent

In order to define a base point in $\gamma^{c\ell}$ we introduce the basis
\begin{eqnarray}\label{basisA}
 e_1+e_{2q}, e_{2n-2q+1}-e_{2n},...,e_q+e_{q+1},e_{2n-q}-e_{2n-q+1},e_{2q+1},e_{2q+2},...,e_{n},e_{n+1},&&  \nonumber \\
 e_{n+2},...,e_{2n-q},e_q-e_{q+1},e_{2n-q}+e_{2n-q+1},...,e_1-e_{2q},e_{2n-2q+1}+e_{2n}.~~~~~~~~~& &
\end{eqnarray}
with $T_I$ being the maximal torus which fixes the basis in the sense of eigenvectors.
\subsection{Weyl group, flag domains and base cycles}
By arguments which are completely analogous to those in $\S \ref{Weyl}$, if
$T$ is a maximal torus defined by a basis
of $(r,s)$-type, then $W(T)=S_n\ltimes \mathbb{Z}_2^{n}$

 As in the case of $Sp(2n,\mathbb{R})$, to describe the flag domains and their $K_0$-base cycles we use the Weyl group $W(T_S)$ of the standard torus $T_S$. The base point for $\alpha $ is the flag $F_S$ defined by the standard basis.  Then
all base points are
$F_\alpha :=w(F_S)$ where $w\in \mathbb{Z}_2^{n}$ is associated to the sign-change vector $\alpha $.  As before the base cycles and flag domains are the orbits $K.F_\alpha $ and $G_0.F_\alpha $, respectively, and $\alpha $ defines a non-degenerate sign with $sign(F_\alpha)=(a,b)$ and then $D_\alpha =D_{a,b}$.

\bigskip\noindent

The maximal tori $T_I$ and $T_S$ defined in the previous section are conjugate, and a conjugation induces an isomorphism $\psi$ of the associated Weyl groups.  In this case the bijective $\psi( \pm(2i-1))=\pm (2q-i+1),~ \psi(\pm 2i)=\mp i$ if $1\leq i\leq  q$ and $\psi(\pm i)= \pm i $ if $i>2q$. Note that $-(2q-i+1)=2n-2q-i+2$.

\bigskip\noindent

Finally, Define $E^-:=<e_1,...,e_{q},e_{2n-q+1},...,e_{2n}>$ and $E^+:=<e_{q+1},...,e_{2n-q}>$.
By the same argument as $\S \ref{flag}$ and Proposition \ref{Base cycle}, for a fixed flag domain $D_{\alpha}$, the base cycle $C_{\alpha}$ is the set
$$C_{\alpha} =\{F \in Z : V_i=(V_i \cap E^-) \oplus ( V_i \cap E^+), 1 \leq i \leq 2n\}.$$
As all cases above the intersection dimensions are determined by $\alpha $ (resp. $(a,b)$).


\chapter{Fixed Point Theorem}
\section{Introduction}
Here, as throughout our work, we consider a homogeneous $G$-manifold of full flags $Z$ equipped with the actions of a real form $G_0$ as  listed in the previous chapter.  The Iwasawa-Borel subgroup $B_I$ is the isotropy group at the flag $F_I$
given there by an explicit basis $\mathcal{B}_I$ which also defines the maximal torus $T_I$ and its Weyl group $W_I$. For $w\in W_I$ we study the intersections
$S_w\cap C_\alpha $ of the Schubert cells $S_w=B_I.F_w$ with any base cycle $C_\alpha $, where $F_w=w(F_I)$ .

\bigskip\noindent

On the other hand, we have a standard basis $\mathcal{B}_S$ which defines a torus $T_S$ and a base point $F_S$ in a standard base cycle which when moved by sign permutations in the Weyl group $W_S$ gives base points in all other base cycles $C_{\alpha}$. Recall that these base cycles are the closed $K$-orbits which are described by the intersections with $E^+$ and $E^-$ in each case. All of the above has been defined in Chapter $2$ for all cases.

\bigskip\noindent

In this chapter we first give detailed formulas for the orbits $b(F_I), b \in B_I$, case by case for all real forms. Then we prove a "Moving Lemma"(See the Moving Lemma \ref{Move}): Given a point $b.w(F_I)$ in $C_\alpha $, we move it continously in $C_\alpha$ to a $T_S$-fixed point by starting with the basis given for $b.w(F_I)$ and moving continuously by a curve $\gamma (t)$ of bases $\mathcal{B}(t)$ which defines flags in $S_w\cap C_\alpha $. In general the intersection $S_w \cap C_{\alpha}$ may have many connected components. The moving procedure implies that in each component of $S_w\cap C_\alpha $ there
is a fixed point. In the case where $S_w$ is of complementary dimension to $C_{\alpha}$ the intersection $S_w \cap C_{\alpha}$ is finite; so the curve $\gamma (t)$ is just a fixed point and the result states that $S_w \cap C_{\alpha}$ consists only of $T_S$-fixed points (See Corollary \ref{fixthm}).

\bigskip\noindent


\section{Description of the $B_I$-orbits}\label{S3.2}
In the previous chapter in every case we have chosen a flag $F_I$ defined by the basis $(v_1,\ldots ,v_{m}),m=2n$ or $m=2n+1$, which belongs to the $G_0$-closed  orbit $\gamma^{cl}$. Given a Weyl element $w\in W$, we consider the Schubert cell $B_I.F_w$ and an element $F=b(F_w)$  in it which is defined by an ordered basis $(b.v_{w_1},\ldots ,b.v_{w_{m}})$, $b \in B_I$.
If $F\in C_{\alpha}$, then it can be defined by an ordered basis $(\varepsilon_1,\ldots ,\varepsilon_{m})$
where for all $k$ the vector $\varepsilon_k$ is either in $E^-$ or $E^+$ where $V=E^+ \oplus E^-$, and is a linear combination
$$
\varepsilon_k=\sum_{j\le k} c_jb.v_{w_j}\,.
$$
In this case we call the basis a \textit{\textbf{split basis}}.
As a result of the discussion in the previous section we can write formulas for  $b.v_{w_j}$ for any element $b \in B_I$ for each real form as follows:
\begin{description}
\item[1. If $G_0=SP(2n,\mathbb{R})$,] then
	$$
b.v_{w_j}=\eta_j(e_r +e_{2n-r+1})+\zeta_j(e_r-e_{2n-r+1})+B_j=K_j+B_j,~1\leq r \leq n
$$
where  $B_j$ does not involve the basis vectors $e_r $ and $e_{2n-r+1}$.
\item[2. If $G_0=SO^*(2n)$,] then
	$$
b.v_{w_j}=\eta_j(e_r +ie_{n+r})+\zeta_j(e_r-ie_{n+r})+B_j=K_j+B_j,~1\leq r \leq n
$$
where  $B_j$ does not involve the basis vectors $e_r $ and $e_{n+r}$.
	\item[3. If $G_0=SO(p,q)$,] then
	\begin{description}
		\item[(i)] $b.v_{w_j}=\eta_{ij}(e_r +e_{2q-r+1})+\zeta_{ij}(e_r-e_{2q-r+1})+\eta_{kj}(e_{r+1} +e_{2q-r})+\zeta_{kj}(e_{r+1}-e_{2q-r})  +B_j\\=K_j+B_j, $ where $~1\leq r \leq q$
and $\eta_{kj}= \pm i \eta_{ij}$ and $\zeta_{kj}= \pm i \zeta_{ij}$ and $B_j$ does not involve the basis vectors $e_r $, $e_{r+1}$, $e_{2q-r+1}$ and $e_{2q-r}$.
\item[(ii)] $b.v_{w_j}=\eta_j(e_{m-2r+1} +ie_{m-2r+2})+B_j=K_j+B_j,~1\leq r \leq \frac{m-2q}{2}$  \\
where $B_j$ does not involve the basis vectors $e_{m-2r+1}$ and $e_{m-2r+2}$.
	\item[(iii)] $b.v_{w_j}=\eta_j e_{2q+1}+B_j=K_j+B_j,$  \\
where $B_j$ does not involve the basis vectors $e_{2q+1}$.
	
	\end{description}
	
\item[4. If $G_0=SP(2p,2q)$,] then
\begin{description}
	\item[(i)] $b.v_{w_j}=\eta_j(e_r +e_{2q-r+1})+\zeta_j(e_r-e_{2q-r+1})+B_j=K_j+B_j, $ where $~1\leq r \leq q$
where  $B_j$ does not involve the basis vectors $e_r $ and $e_{2q-r+1}$.

\item[(ii)] $b.v_{w_j}=\eta_j(e_{2n-q+r} + e_{2n-r+1})+\zeta_j(e_{2n-q+r} - e_{2n-r+1})+B_j=\tilde{K}_j+\tilde{B}_j,$ where $~1\leq r \leq q$
where  $B_j$ does not involve the basis vectors $e_{2n-q+r} $ and $e_{2n-r+1}$.

\item[(iii)] $b.v_{w_j}=\eta_j(e_{2q+r})+B_j.$ where $~1\leq r \leq 2p$
where  $B_j$ does not involve the basis vector $e_{2q+r}$.

\end{description}
\end{description}

\section{The Moving Lemma and a Fixed Point Theorem}

 Assume that the Schubert cell $B_I.F_w$ has non-empty intersection with $D_{\alpha}$. This implies that
it has a non-empty intersection with the base cycle $C_{\alpha}$. The flags $F_0$ in this cycle
can be described by bases $\mathfrak{B}(F_0)$ which are split in the sense that their entries
(vectors) are either in $E^-$ or $E^+$ where $V=E^- \oplus E^+$.

\bigskip\noindent

\begin{lem}(\text{\textbf{The Moving Lemma}}).\label{Move}
Given a flag $F_0 \in B_I.F_I \cap C_{\alpha}$, there is a continuous curve of split bases $\mathcal{F}=\mathfrak{B}(t), t \in [0,t_0]$ such that $\mathfrak{B}(t_0)$ defines the given flag $F_0$ and $\mathcal{B}(0)$ defines a $T_S$-fixed point.
	\end{lem}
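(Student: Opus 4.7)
The plan is to construct the curve $\mathfrak{B}(t)$ by writing the split basis of $F_0$ in standard-basis coordinates and then scaling its non-pivot entries to zero. Since $F_0 \in C_\alpha$, Proposition~\ref{Base cycle} (and its analogues in the other cases) supplies an ordered basis $(\varepsilon_1,\ldots,\varepsilon_m)$ of $F_0$ with each $\varepsilon_k$ lying entirely in $E^+$ or in $E^-$. Writing $F_0 = b\cdot F_w$, this split basis is an upper-triangular combination $\varepsilon_k = \sum_{j\le k} c_j^{\,k}\,b\cdot v_{w_j}$ of the first $k$ vectors of the $B_I$-orbit basis of $F_0$. Applying the case-by-case formulas $b\cdot v_{w_j} = K_j + B_j$ from Section~\ref{S3.2}, I would first expand each $\varepsilon_k$ as a concrete linear combination $\varepsilon_k = \sum_i a_{k,i}\,e_i$ of standard basis vectors, all of which lie in the same $E^{\pm}$ as $\varepsilon_k$.

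Next, for each $k$ I would single out a pivot index $i(k)$ with $a_{k,i(k)} \neq 0$ such that the ordered tuple $(e_{i(1)},\ldots,e_{i(m)})$ defines a $T_S$-fixed full flag in $C_\alpha$ of signature $\alpha$ that still lies in $B_I\cdot F_w$. This pivot is forced by the Iwasawa pair structure: in the $Sp(2n,\mathbb{R})$ case, for instance, the pair $(e_r, e_{2n-r+1})$ appearing inside $b\cdot v_{w_j}$ contributes exactly one vector in $E^+$ and one in $E^-$, which selects the pivot, and the bijection $\psi$ of Section~\ref{Weyl} records this correspondence; an entirely analogous statement holds in the other three cases. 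The homotopy is then
\[
\varepsilon_k(t) \;=\; a_{k,i(k)}\,e_{i(k)} \;+\; t\sum_{i \neq i(k)} a_{k,i}\,e_i, \qquad t \in [0,1].
\]
Each $\varepsilon_k(t)$ stays in the same $E^{\pm}$ as $\varepsilon_k$, and the family $\mathfrak{B}(t) = (\varepsilon_1(t),\ldots,\varepsilon_m(t))$ remains linearly independent for every $t \in [0,1]$, because the pivot indices $i(1),\ldots,i(m)$ are distinct and the leading coefficients $a_{k,i(k)}$ are nonzero. Hence $\mathfrak{B}(t)$ is a split basis throughout; at $t=1$ it recovers $F_0$, and at $t=0$ it reduces to $(a_{k,i(k)}\,e_{i(k)})_k$, which defines a $T_S$-fixed flag.

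The main obstacle will be to verify that the intermediate flags $F(t)$ stay inside the Schubert cell $B_I\cdot F_w$, equivalently that $\dim(V_k(t)\cap V_{I,\ell})$ is independent of $t$ for all $k,\ell$. To establish this one rewrites each $\varepsilon_k(t)$ back in the Iwasawa basis via the relations of Section~\ref{S3.2} and checks that the leading Iwasawa coefficients, which encode the Schubert position, do not depend on $t$. The bijection $\psi$ of Chapter~2 was set up precisely so that the Iwasawa pivots of $F_w$ match the standard pivots $e_{i(k)}$ chosen above, which makes this compatibility formally automatic once the bookkeeping is carried out. Executing this verification case by case for $Sp(2n,\mathbb{R})$, $SO^*(2n)$, $SO(p,q)$ and $Sp(2p,2q)$ — using the explicit forms of $K_j$ and $B_j$ listed in Section~\ref{S3.2} — is the technical crux of the argument.
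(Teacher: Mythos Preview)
Your strategy is dual to the paper's. You deform in standard-basis coordinates so that each $\varepsilon_k(t)$ stays in its $E^{\pm}$ by construction, and you identify membership in $S_w$ as the remaining obstacle. The paper does the opposite: it produces the curve as $b(t)\cdot F_w$ for an explicit $b(t)\in B_I$, defined by $b(t)(v_{w_j})=tK_j+B_j$, so that membership in $S_w$ and in $Z=G/B$ is automatic, and the remaining work is to check that the split condition (each $\varepsilon_\kappa(t)\in E^{\pm}$) is preserved. The paper also proceeds iteratively, fixing the first non-$T_S$-eigenvector $\varepsilon_k$ at each step rather than scaling all non-pivot entries at once.

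There is a gap in your plan that is more basic than the $S_w$-obstacle you flag: your intermediate flags $F(t)$ need not lie in $Z$ at all, because $b$-isotropy can fail along the homotopy. The subspaces $E^+$ and $E^-$ are each $b$-isotropic, but they pair nontrivially with one another, so for $\varepsilon_j\in E^+$ and $\varepsilon_k\in E^-$ the condition $b(\varepsilon_j(t),\varepsilon_k(t))=0$ is a genuine constraint, not a consequence of the split property. At $t=1$ it holds because $F_0\in Z$; at $t=0$ it holds provided your pivot tuple $(e_{i(k)})$ defines an isotropic flag; but for intermediate $t$ the straight-line interpolation mixes pivot and non-pivot coefficients in a way that generally destroys the relation. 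Concretely, in $Sp(4,\mathbb R)$ with $\varepsilon_1=e_1+ae_2\in E^+$ and $\varepsilon_2=e_3-ae_4\in E^-$ (isotropic at $t=1$), if the pivots were $e_1$ and $e_4$ one gets $b(\varepsilon_1(t),\varepsilon_2(t))=a(t^2-1)\neq 0$ for $0<t<1$. Your description of the pivot rule (``forced by the Iwasawa pair structure'') is not precise enough to exclude such choices, and even with the correct rule you would still owe a verification of isotropy for all pairs $j,k$; this is exactly what the paper's group-theoretic construction gets for free, since $b(t)\in G$ preserves the form $b$.
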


\begin{proof}
The fixed point $F_I$ in $\gamma^{cl}$ is defined by the ordered basis
$$(v_1,v_2,...,v_m)$$
 and $F_w=w(F_I)$. Let $b.F_w=:F_0, b \in B_I$, be a given flag in $B_I.F_w \cap C_{\alpha}$ and regard it as being given
by a basis $$(b.v_{w_1},\ldots ,b.v_{w_m})$$ which is given in $\S \ref{S3.2}$. Of course this basis is not necessarily split. However, the points which belongs to $C_\alpha $ are given by split bases, and we will now construct an associated split basis $(\varepsilon_1,\ldots ,\varepsilon_{m})$ which belong to $S_w \cap C_{\alpha}$ (term-by-term, recursively). First, we analyze $\varepsilon_1$ and describe it in a way that is convenient for our purposes. There are three possibilities for
$$\varepsilon_1 =b.v_{w_1}=\lambda v_{w_1}+ B_{w_1}, $$ where $B_{w_1}=\sum_{j\le w_1} c_jv_{j},$ i.e.,
\begin{enumerate}
	\item If $1\leq {w_1} \leq q$  \footnote{$q$ comes from the signature $(p.q)$ of the bilinear form $b$ where $q=n$ in the cases $Sp(2n,\mathbb{R})$ and $SO^*(2n)$.} such that the dual vector \footnote{If $v_i,\tilde{v}_i \in \mathbb{C}^m$ and $b(v_i,v_i)=0$,$b(\tilde{v}_i,\tilde{v}_i)=0$ and $b(\tilde{v}_i,v_i)=1$ then $\tilde{v}_i$ is the dual vector of $v_i$.} $v_{m-w_1+1}$ is not involved as a term of $B_{w_1}$, so there is no possibility of canceling the term $\lambda v_{w_1}$ with $B_{w_1}$.
	
	\item  If $ -1 \leq {w_1} \leq -q$ such that the dual vector $v_{m-w_1+1}$ is  involved as a term in $B_{w_1}$, so  we can find $2 \lambda e_j$ as combination of these two vectors.
	
	\item If $q+1 \leq |w_1| \leq \left\lfloor \frac{m}{2} \right\rfloor$, this case appear for the real forms $G_0=SP(p,q)$ and $G_0=SO(p,q)$. In this case $v_{w_1}=e_r$ for some $r$ if $G_0=SP(p,q)$, and $v_{w_1}=e_r \pm i e_{r+1}$ for some $r$  or $v_{w_1}=e_q$ or $v_{w_1}=e_{q+1}$  if $G_0=SO(p,q)$.
	\end{enumerate}
Note that such combinations in those cases are necessary for the first vector in the basis $\mathcal{B}(F_0)$. Now a number of the other vectors $v_i$ occur in such pairs and must be combined in the same way. However, the resulting coefficient is an arbitrary complex number.
As a result, if $\tilde{\lambda}$ is the coefficient of the dual vector $v_{m-w_1+1}$, then the first basis element is $$ \varepsilon_1= 2\lambda e_j + t_1e_{j+1}+...+ t_{n-j}e_n, \text{~~if~} \tilde{\lambda}=\lambda$$ or
$$\varepsilon_1 = 2\lambda e_{m-j+1} + t_1e_{m-j}+...+ t_{m-j}e_{n+1}  \text{~~if~} \tilde{\lambda}=-\lambda.$$
where $m=2n$ or $m=2n+1$. We can not know the values of the coefficients, but only know that the $t_i$ can in principle be any complex number and $\lambda \neq 0$.  Note that the first case $\varepsilon_1=b.v_{w_1}$ where $ 1\leq w_1 \leq q$ doesn't happen, because there is no possible linear combination which produces a split basis element.\\
Next we construct and describe $\varepsilon_2$. Recall that the vector $b.v_{w_2}$ is given by the form
$$b.v_{w_2}=\lambda v_{w_2}+B_{w_2}, $$
where $B_{w_2}=\sum_{j\le w_2} c_jv_{j}$, but we can modify it by adding in an appropriate multiple of $\varepsilon_1$, i.e.
$$\varepsilon_2 =\lambda v_{w_2}+d_1 \varepsilon_1+B_{w_2}. $$  If we use $d_1 \varepsilon_1$ in the linear combination which defined $\varepsilon_2$, we will call this combination "external combination", if not, we call this combination "internal combination". Then there are four possibilities for $\varepsilon_2$ i.e.,
\begin{description}
	\item[ A. ] If $1\leq {w_2} \leq q$ such that the dual vector $v_{m-w_2+1}$ is not involved as a term of $B_{w_2}$ and $ \varepsilon_1 \neq v_{m-w_2+1}$ , so there is no possibility of canceling the term $\lambda v_{w_2}$ with $B_{w_2}$ or with $\varepsilon_1$.
	
	\item[ B. ] If $1\leq {w_2} \leq q$ such that the dual vector $v_{m-w_2+1}$ is not involved as a term of $B_{w_2}$ but $ \varepsilon_1 = v_{m-w_2+1}$ , so  we can find $2 \lambda e_j$ as combination of $\varepsilon_1$ and $v_{w_2}$.
	
	\item[ C. ]  If $ -1 \leq {w_2} \leq -q$ such that the dual vector $v_{m-w_1+1}$ is  involved as a term in $B_{w_2}$, so  we can find $2 \lambda e_j$ as combination of these two vectors.
	
	\item[ D. ] If $q+1 \leq |w_2| \leq \left\lfloor  \frac{m}{2} \right\rfloor$, this case appear for the real forms $G_0=SP(p,q)$ and $G_0=SO(p,q)$. In this case $v_{w_2}=e_r$ for some $r$ if $G_0=SP(p,q)$, and $v_{w_2}=e_r \pm i e_{r+1}$ for some $r$  or $v_{w_2}=e_q$ or $v_{w_2}=e_{q+1}$  if $G_0=SO(p,q)$.
	\end{description}
	The combinations in those cases are necessary for the second vector in the basis $\mathcal{B}(F_0)$ and all other vectors $\varepsilon_j, j\geq 2$ . Now a number of the other vectors $v_i$ occur in such pairs and must be combined in the same way.
As a result, if $\tilde{\lambda}$ is the coefficient of the dual vector $v_{m-w_2+1}$, then the second basis element is $$ \varepsilon_2= 2\lambda e_j + t_1e_{j+1}+...+ t_{n-j}e_n, \text{ if } \tilde{\lambda}=\lambda, $$
$$\varepsilon_2 = 2\lambda e_{m-j+1} + t_1e_{m-j}+...+ t_{m-j}e_{n+1}, \text{ if }  \tilde{\lambda}=-\lambda .$$
where $m=2n$ or $m=2n+1$, $t_i$ is complex number and $\lambda \neq 0$.

We now assume that $\varepsilon_1,\ldots ,\varepsilon_{j-1}$ have been constructed as in the case of $\varepsilon_2$ depending on $\tilde{\lambda}$. This means that $\varepsilon_\ell =2\lambda e_j + t_1e_{j+1}+...+ t_{n-j}e_n, \text{ if } \tilde{\lambda}=\lambda,$ or $\varepsilon_\ell =2\lambda e_{m-j+1} + t_1e_{m-j}+...+ t_{m-j}e_{n+1}, \text{ if }  \tilde{\lambda}=-\lambda .$ for $\ell \le j-1$. Now we will construct $\varepsilon_j$ with these properties. Recall that the vector $$b.v_{w_j}=\lambda v_{w_j}+B_{w_j}, $$ where $B_{w_j}=\sum_{j\le w_2} c_jv_{j},$ but we can modify it by adding in an appropriate linear combination of the $\varepsilon_i,1\leq i \leq j-1$, i.e. $$\varepsilon_j =\lambda v_{w_j}+B_{w_j}+ \sum_{i<j}d_i \varepsilon_i, $$ then there are only three cases for $\varepsilon_j$ :
\begin{enumerate}
	\item Some of the coefficients of $B_j$ are such that cancellation takes
place as step $C$ above which called internal cancellation.
\item There is a linear combination of the previous split basis elements $\sum_{i<j}d_i \varepsilon_i$ that
cancels one of the terms in expressions of the type $e_s \pm e_r$. Similarly as step $B$ above which called external cancellation..
\item The vector $v_j$ is equal to $e_r$ in the case of the real form $SP(p,q)$ or $v_j$ is equal to $e_r \pm i e_{r+1}$ or $e_r$ in the case of the real form $SO(p,q)$. Similarly as step $D$ above which is a kind of internal cancellation..
\end{enumerate}
As a result, if $\tilde{\lambda}$ is the coefficient of the dual vector $v_{m-w_1+1}$, then the $j$-th basis element is $$ \varepsilon_j= 2\lambda e_s + t_1e_{s+1}+...+ t_{n-s}e_n, \tilde{\lambda}= \lambda,$$ or
$$\varepsilon_j = 2\lambda e_{m-s+1} + t_1e_{m-s}+...+ t_{m-s}e_{n+1}, \tilde{\lambda}= - \lambda,$$
 where $m=2n$ or $m=2n+1$ and $t_i$ is complex number and $\lambda \neq 0$.

\bigskip\noindent

 The next claim is we can move continuously from an arbitrary $\varepsilon$-basis constructed above to a $T_S$-fixed point continuously, i.e., without leaving $C_{\alpha}$.
The curve
$$\mathfrak{B}(t)=(\varepsilon_1(t),\ldots ,\varepsilon_m(t)),$$ which begins at the
given basis $(\varepsilon_1,\ldots ,\varepsilon_m)$ and ends at a $T_S$-fixed
bases $(\varepsilon_1(0),\ldots ,\varepsilon_m(0))$, will be constructed stepwise.
First we show that there is a curve $\varepsilon_1(t)$ beginning at the given vector
$\varepsilon_1$ and ending at a $T_S$-fixed vector $\varepsilon_1(0)$.  With this
we reach a new base point $$(\varepsilon_1(0),\varepsilon_2,\ldots ,\varepsilon_m).$$
If the vector $\varepsilon_1$ in the given base point  is $T_S$-fixed point, then  the curve $\varepsilon_1(t)=\varepsilon_1(0)$ for all $t \in[0,t_0]$, i.e a $T_S$-fixed point.
Then we assume that the given base point $(\varepsilon_1,\ldots ,\varepsilon_m)$ has
the property that $\varepsilon_\ell $ is $T_S$-fixed for $\ell <k $ such that the curve $\varepsilon_\ell(t)=\varepsilon_\ell(0), \ell<k$ for all $t \in[0,t_0]$ and then construct
a curve $\varepsilon_k(t)$ beginning at $\varepsilon_k$ and ending at  a $T_S$-fixed
vector $\varepsilon_k(0)$. This defines a new base point in $S_w\cap C_\alpha $ with $\varepsilon_\ell$
begin $T_S$-fixed for $\ell \le k$.  As a result we define a sequence of curves whose composition
is the desired continuous curve.


Suppose $F \in S_w \cap C_{\alpha}$ is associated to a split basis and  not $T_S$-fixed. We may therefore choose an $\varepsilon$-basis with a smallest index $k$ such that $\varepsilon_k$
is not fixed and $\varepsilon_\ell $ is fixed for $\ell < k$.

So we can define the curve such that  $\varepsilon_{\ell}$ is $T_S$-fixed ,  then we will construct  the curve $\varepsilon_k(t)$ where $\varepsilon_k$ is the first non $T_S$-fixed.  

 It is sufficient to consider the case where $\varepsilon_k\in V^-$.
The first step is to analyze the $\varepsilon_k$, but they are constructed in such a way that for the movement $b(t)$  we have
$$\varepsilon_k=(\lambda_1 e_r +\sum_{j\le q} a_je_j)+i(\lambda_2 e_{r+1} +\sum_{s\le q} b_se_s)$$ where $a_r=0$,$b_{r+1}=0$ and $a_j\not=0$, $b_s\not=0$ for some other $j$ and $s$.  Furthermore, if $\varepsilon_\ell =e_j $
for some $\ell < r $,   then $a_j=b_s=\lambda_2=0$, and if $\varepsilon_\ell =e_j+ie_{j+1}$
for some $\ell < r $,   then $a_j=b_s=0$. In general,  if $G_0=SO(p,q)$, then $\lambda_1=\pm \lambda_2$, and if $G_0\neq SO(p,q)$, then $\lambda_2=b_s=0$.

If $k=1$ then the curve is $$
\varepsilon_1(t)=c_1 b(t).v_{w_1}\,.
$$
It follows that
$$
\varepsilon_1 (t)=\varepsilon_1 +(t-1) K_1\,.
$$
where  $K_1=0$~ corresponds to~ $e_r+ie_{r+1}$~ or ~$e_r$~ not being involved in $\varepsilon_1$.  Note that $\varepsilon_1 \in V^-$ implies that $\zeta_1=0$ and $\tilde{\zeta}_1=0$, and $\varepsilon_1 \in V^+$ implies that $\eta_1=0$ and $\tilde{\eta}_1=0$ in the formulas of section \ref{S3.2}.   Thus $\varepsilon_1 (t)\in V^-$ (resp. $V^+$)
 whenever $\varepsilon_1 \in V^-$(resp. $V^+$).  In other words, the image of the curve $\gamma $ is contained in $S_w\cap C_\alpha $.
 Finally, observe that $\eta_1 $ is the coefficient of $e_r $ in $\varepsilon_1$.  This has be scaled to $1$ and as a result the coefficient
 of $e_r $ or $e_r+ie_{r+1}$ in $\varepsilon_1(t)$ is $t$.  Thus
$$\varepsilon_1(t)=\varepsilon_1+(t-1)e_r, \text{~if~} G_0 \neq SO(p,q),$$ or
$$\varepsilon_1(t)=\varepsilon_1+(t-1)(e_r+ie_{r+1}), \text{~if~} G_0 = SO(p,q).$$
  In the various cases we then have the following descriptions of $\varepsilon_1(0)$;
\begin{description}
	\item[$\bullet$ If $G_0=SP(2n,\mathbb{R})$,] then
	$$
	\varepsilon_1(0)=\varepsilon_1-e_r=[(e_r +e_{2n-r+1})]-e_r=e_{2n-r+1},~1\leq r \leq n.
$$

\item[$\bullet$ If $G_0=SO^*(2n)$,] then
	$$
\varepsilon_1(0)=\varepsilon_1-e_r=[(e_r +ie_{n+r})]-e_r=ie_{n+r},~1\leq r \leq n,
$$or
$$
\varepsilon_1(0)=\varepsilon_1-e_r=[-i(e_{r-n} +e_{r})]-e_r=-ie_{r-n},~n+1\leq r \leq 2n.
$$
	\item[$\bullet$ If $G_0=SO(p,q)$,] then
		$$\varepsilon_1(0)=\varepsilon_1-(e_r \mp ie_{r+1})=(e_r  \mp ie_{r+1})+(e_{2q-r} \mp ie_{2q-r+1})-(e_r \mp ie_{r+1})=(e_{2q-r} \mp ie_{2q-r+1}),$$
	where	$~1\leq r \leq q.$
		\item[$\bullet$ If $G_0=SP(2p,2q)$,] then
	 $$\varepsilon_1(0)=\varepsilon_1-e_r=(e_r +e_{2q-r+1})-e_r=e_{2q-r+1}, ~1\leq r \leq q.$$
\end{description}

\bigskip\noindent
Recall that the above non fixed $\varepsilon_k$ is $\varepsilon_k=b.v_{w_k}=K_k+B_k$ which defined for all cases in $\S \ref{S3.2}$ where $B_k$ does not involve the terms of $K_k$. In the following we will define a curve $\gamma$ which sent the non fixed point to $T_S$-fixed point . Define $b(t)\in B_I$ by $b(t)(v_{w_j})=tK_j+B_j$. The curve $\gamma $ is defined by $t\mapsto (b(t)(v_{w_1}),\ldots , b(t)(v_{w_{2n}}))$ which is defined by the corresponding $\varepsilon $-basis,
$$
\varepsilon_k(t)=\sum_{j\le k} c_jb(t).v_{w_j}\,.
$$
It follows that
$$
\varepsilon_\kappa (t)=\varepsilon_\kappa +(t-1)\sum_{j\le \kappa} c_{\kappa j}K_j\,.
$$
Notice that $\sum_{j\le \kappa} c_{\kappa j}K_j=0$~ corresponds to~ $e_r+ie_{r+1}$~ or ~$e_r$~ not being involved in $\varepsilon_\kappa$.  Therefore $\varepsilon_\kappa (t)=\varepsilon _\kappa $ for $\kappa <k$.   In general, $\varepsilon_\kappa \in V^-$ implies that
$\sum c_{\kappa j}\zeta_j=0$ and $\sum c_{\kappa j}\tilde{\zeta}_j=0$, and $\varepsilon_\kappa \in V^+$ implies that $\sum c_{\kappa j}\eta_j=0$ and $\sum c_{\kappa j}\tilde{\eta}_j=0$ in the formulas of section \ref{S3.2}.   Thus $\varepsilon_\kappa (t)\in V^-$ (resp. $V^+$)
 whenever $\varepsilon_\kappa \in V^-$(resp. $V^+$).  In other words, the image of the curve $\gamma $ is contained in $S_w\cap C_\alpha $.
 Finally, observe that $\sum c_{\kappa j}\eta_j $ is the coefficient of $e_r $ in $\varepsilon_k$.  This has be scaled to $1$ and as a result the coefficient
 of $e_r $ or $e_r+ie_{r+1}$ in $\varepsilon_k(t)$ is $t$.  Thus
$$\varepsilon_k(t)=\varepsilon_k+(t-1)e_r, \text{~if~} G_0 \neq SO(p,q),$$ or
$$\varepsilon_k(t)=\varepsilon_k+(t-1)(e_r+ie_{r+1}), \text{~if~} G_0 = SO(p,q).$$
 Since at least one $a_j\not=0$, and one $b_j\not=0$
 and $\varepsilon_\ell (t)=\varepsilon_\ell $ for $\ell <k$. In the various cases we then have the following descriptions of $\varepsilon_k(0)$;
\begin{description}
	\item[$\bullet$ If $G_0=SP(2n,\mathbb{R})$,] then
	$$
	\varepsilon_k(0)=\varepsilon_k-e_r=[(e_r +e_{2n-r+1})]-e_r=e_{2n-r+1},~1\leq r \leq n.
$$

\item[$\bullet$ If $G_0=SO^*(2n)$,] then
	$$
\varepsilon_k(0)=\varepsilon_k-e_r=[(e_r +ie_{n+r})]-e_r=ie_{n+r},~1\leq r \leq n,
$$or
$$
\varepsilon_k(0)=\varepsilon_k-e_r=[-i(e_{r-n} +e_{r})]-e_r=-ie_{r-n},~n+1\leq r \leq 2n.
$$
	\item[$\bullet$ If $G_0=SO(p,q)$,] then
		$$\varepsilon_k(0)=\varepsilon_k-(e_r \mp ie_{r+1})=(e_r  \mp ie_{r+1})+(e_{2q-r} \mp ie_{2q-r+1})-(e_r \mp ie_{r+1})=(e_{2q-r} \mp ie_{2q-r+1}),$$ $ ~1\leq r \leq q.$
		\item[$\bullet$ If $G_0=SP(2p,2q)$,] then
	 $$\varepsilon_k(0)=\varepsilon_k-e_r=(e_r +e_{2q-r+1})-e_r=e_{2q-r+1}, ~1\leq r \leq q.$$
\end{description}
all of the above give us a vector in the standard basis $T_S$ which implies that we move the point to a $T_S$-fixed point. The above recursive construction defines the $\varepsilon(t)_j$ as desired and these in turn define the curve $\mathfrak{B}(t)$ of bases.
\end{proof}

The above proof shows that if $v(t)$ is any of the basis vectors $\varepsilon_j(t)$ in $\mathfrak{B}(t)$,
then the following hold.

\begin{rems}\label{rem1}

\begin{enumerate}

	\item  At most one coefficient of $v(t)$ in $\mathfrak{B}(t)$ is restricted not to vanish (This happens as in the case $C$ where $2 \lambda$ arises as a coefficient!).
  \item Except for the previous case, all coefficients are not restricted. However, they may or may not be related, e.g., in the case where an external linear combination was used.
   \item In all cases there are limiting vectors when various $t_i$ go to $0$ which are single elements of the standard basis $(e_1,..., e_{2n})$ except for the real form $SO(p,q)$ the basis is
	
	$(e_1 \pm ie_2,....,e_{m-1} \pm ie_{m})$.
\end{enumerate}

\end{rems}

A Schubert cell intersects $C_\alpha $ in several connected component (sometimes only one as the case $Sp(2n,\mathbb{R})$, but sometimes many as the case $SO(p,q)$). Since the curve given by the Moving Lemma is continuous, it follows that every such component contains a $T_S$-fixed point.

\begin{thm}(\text{\textbf{The Fixed Point Theorem}}).\label{corcor}
Every component of the intersection $S_w \cap C_{\alpha}$ has a fixed point.\\
\end{thm}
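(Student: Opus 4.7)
The plan is to deduce this theorem as an essentially immediate corollary of the Moving Lemma. Let $\Sigma$ be any connected component of $S_w \cap C_\alpha$ and pick an arbitrary flag $F_0 \in \Sigma$. Since $\Sigma \subset S_w \cap C_\alpha \subset B_I.F_w \cap C_\alpha$, the Moving Lemma applies to $F_0$, producing a continuous curve of split bases $\mathfrak{B}(t)$, $t \in [0,t_0]$, with $\mathfrak{B}(t_0)$ defining $F_0$ and $\mathfrak{B}(0)$ defining a $T_S$-fixed flag.

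The first thing I would verify carefully is that the entire image of this curve lies in $S_w \cap C_\alpha$. Membership in $C_\alpha$ is guaranteed by the fact that the curve consists of split bases (each $\varepsilon_j(t)$ lies entirely in $E^+$ or $E^-$, as ensured by the recursive construction in the Moving Lemma). Membership in $S_w$ follows because $\mathfrak{B}(t)$ is obtained from the action of $b(t) \in B_I$ on the original $F_w$-basis, together with upper-triangular linear combinations that preserve the flag; hence every $\mathfrak{B}(t)$ defines a flag in the Schubert cell $B_I.F_w = S_w$.

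Once this is established, continuity of $t \mapsto \mathfrak{B}(t)$ in $Z$ combined with the connectedness of the parameter interval $[0,t_0]$ implies that the curve is contained in a single connected component of $S_w \cap C_\alpha$. Since $\mathfrak{B}(t_0)$ lies in $\Sigma$, the entire curve lies in $\Sigma$, and in particular the $T_S$-fixed endpoint $\mathfrak{B}(0)$ lies in $\Sigma$. This gives the desired fixed point in $\Sigma$, and since $\Sigma$ was arbitrary the theorem follows.

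The only delicate step is the first one above, namely ensuring that the curve never escapes $S_w \cap C_\alpha$ along the way; this is really the substantive content already packed into the Moving Lemma's construction, so no new ideas are required. Given that, the proof is just a short continuity argument applied componentwise.
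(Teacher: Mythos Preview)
Your proposal is correct and follows essentially the same approach as the paper: the theorem is presented there as an immediate consequence of the Moving Lemma, noting that since the curve produced is continuous and stays in $S_w\cap C_\alpha$, each connected component must contain a $T_S$-fixed point. Your write-up is slightly more explicit about why the curve remains in the same component, but no new idea is introduced.
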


\begin{cor}\label{fixthm}
If $codim(S_w)=dim(C_\alpha)$ then
$$
S_w\cap C_\alpha \subset Fix(T_s)\,.
$$
\end {cor}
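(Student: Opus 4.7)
The plan is to deduce the corollary directly from the Fixed Point Theorem (Theorem \ref{corcor}) by combining it with the complementary-dimension transversality result already recalled in the Introduction. The idea is: in complementary dimension the intersection is finite, so each connected component is a singleton, and then the Fixed Point Theorem forces every such singleton to be a $T_S$-fixed point.

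First, I would invoke the fact from \cite{FHW} that has been recorded in the Preliminaries: when the Iwasawa-Schubert variety $S_w$ has complementary dimension to $C_0$, the intersection $S_w \cap D$ is contained in the open $B_I$-orbit $\mathcal{O}$ of $S_w$, and $\mathcal{O} \cap C_0$ is finite and transversal at each of its points. Since $C_\alpha \subset D_\alpha$, this gives that $S_w \cap C_\alpha$ is a finite subset of $S_w$. In particular every connected component of $S_w \cap C_\alpha$ is a single point.

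Next I would apply the Fixed Point Theorem: every connected component of $S_w \cap C_\alpha$ contains at least one $T_S$-fixed point. Combining the two previous observations, each one-point component must coincide with that $T_S$-fixed point, and therefore
\[
S_w \cap C_\alpha \subset \mathrm{Fix}(T_S),
\]
which is exactly the claim.

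The only delicate issue, and the place where one has to be a little careful, is the justification that finiteness of $S_w \cap C_\alpha$ really does come out of the complementary-dimension hypothesis. This is not reproved here; it relies on the transversality statement of \cite{FHW} for Iwasawa-Schubert varieties recalled in the Introduction. Once that is cited, the rest of the argument is a one-line combinatorial consequence of Theorem \ref{corcor}: a finite set whose every connected component (a singleton) contains a $T_S$-fixed point consists entirely of $T_S$-fixed points.
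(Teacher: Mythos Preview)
Your proof is correct and follows essentially the same approach as the paper: finiteness of the intersection in complementary dimension reduces each component to a single point, and then Theorem~\ref{corcor} forces that point to be $T_S$-fixed. The paper's proof is in fact terser than yours, simply asserting that the components are isolated points without explicitly citing the transversality result from \cite{FHW}.
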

\begin{proof}
If $S_w$ of complementary dimension, then the components of intersection are isolated points. By using Theorem \ref{corcor} every intersection point is a fixed point.\\
\end{proof}


\chapter{Cycle intersection for $SP(2n,\mathbb{R})$}

 \section{Conditions for $S_w \cap C_{\alpha} \neq \emptyset$}
In this chapter the manifold $Z$ under consideration is the homogeneous space of maximally isotropic full flags of  the complex symplectic group $G=Sp(2n,\mathbb{C})$ equipped  with the action of the real form $G_0=Sp(2n,\mathbb{R})$.
Recall that $F_I$ is the fixed point in the closed orbit $\gamma^{cl}$ with isotropy group $B_I$, i.e., which is associated to the basis (\ref{basisAA}).

\bigskip\noindent

As described in $\S 2.2.3$ the Weyl group $W_I$ associated to $T_I$ acts on the basis (\ref{basisAA}) by permutation plus sign change. If $w\in W_I$ is regarded as an element of $G$, then $F_w:=w(F_I)$ and $S_w:=B_I.F_w$ is the associated Iwasawa-Schubert cell. The dimension of the cells corresponds to the length of the word $w$ in the Weyl group, i.e. if $F_w=w.F_I$, then $dim(B.F_w)=l(w)$. \\
Below we will describe the Weyl elements which parametrize the Iwasawa-Schubert cells which have non-empty intersections with flag domains (See Theorem \ref{thmSP}). For this we call attention to a particular class of Weyl elements.

\bigskip\noindent

\begin{defn}
An element $w=(w_1,w_2,...,w_n) \in S_n \ltimes \mathbb{Z}_2^{n}$ is called \textit{\textbf{a generous permutation}}\footnote{Recall that we define a special way of denoting to write the Weyl elements in section $\S 2.2.3$.} if $w_i<0$ for all $1\leq i\leq n$.
\end{defn}
 \begin{rem}\label{rem333}
When discussing the $B_I$-orbit of the base point $F_I$ it is important to explicitly understand the orbits $B_I.(e_i-e_{2n-i+1})$ and $B_I.(e_i+e_{2n-i+1})$. As we explained in $\S 3.2$ we have the following facts:\\
If $F_I$ and $B_I$ are as above, then the orbits of interest are $B_I.(e_i\pm e_{2n-i+1})$, $i=1,\ldots ,n$. In this case the orbits $B_I.(e_i+ e_{2n-i+1})$ and $B_I.(e_i - e_{2n-i+1})$ have points of the forms
$$b(e_i+e_{2n-i+1})=\lambda(e_i+e_{2n-i+1})+\ldots  +b_n(e_{n}+e_{n+1})+a_n(e_n-e_{n+1})+\ldots +a_1(e_{1}-e_{2n})$$
and
$$b(e_i-e_{2n-i+1})=\lambda(e_i-e_{2n-i+1})+a_{i-1}(e_{i-1}-e_{2n-i})+\ldots +a_1(e_{1}-e_{2n})$$
 respectively, with $\lambda\not=0$. Note that in the above orbits, if $b$ is chosen appropriately, then we can arrange $b(e_i+e_{2n+i-1})=e_i$ or $b(e_i+e_{2n+i-1})=e_{2n-i+1}$ for all $i$. This plays a role in the proof of Theorem \ref{thmSP}.\qed
\end{rem}
\bigskip\noindent
Recall that our base point in the closed orbit $\gamma^{cl}$ is the flag $F_I$ associated to the ordered basis
\begin{equation}\label{basisI}
e_1-e_{2n},e_2-e_{2n-1}...,e_n-e_{n+1},e_n+e_{n+1},...,e_2+e_{2n-1},e_1+e_{2n}
\end{equation}
and the base point which defined the flag domains and the base cycles is the flag associated to the ordered basis
\begin{equation}\label{basisC}
e_1,e_2,e_3,......,e_{2n-1},e_{2n}
\end{equation}
These two bases defined two maximal tori which are conjugate and the corresponding Weyl groups $W_I$ and $W_{T_S}$ are isomorphic. \\
Let $\psi$ be  a bijective map between  Weyl groups $W_I$ and $W_{T_S}$ defined by $\psi(\mp (i))=\pm i,$ for all $1\leq i\leq n$, then we have the following Proposition;

\bigskip\noindent


\begin{prop}
If $w$ is a generous permutation, then the flag $F_{\psi(w)}$ belongs to the orbit $B_I(F_w)$.
\end{prop}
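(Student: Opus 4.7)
The strategy is to exhibit one explicit element $g \in B_I$ that sends $F_w$ to $F_{\psi(w)}$, which directly proves $F_{\psi(w)} \in B_I(F_w)$. Writing $w = (-\sigma(1),\ldots,-\sigma(n))$ for some $\sigma \in S_n$ (since $w$ is generous) and abbreviating $r_i := e_i - e_{2n-i+1}$ and $s_i := e_i + e_{2n-i+1}$, so that $(r_1,\ldots,r_n,s_n,\ldots,s_1)$ is the Iwasawa basis (\ref{basisI}), the flag $F_w$ is determined by its first $n$ vectors $s_{\sigma(1)}, \ldots, s_{\sigma(n)}$, while $\psi(w) = (\sigma(1),\ldots,\sigma(n))$ gives $F_{\psi(w)}$ as the flag whose first $n$ vectors are $e_{\sigma(1)}, \ldots, e_{\sigma(n)}$. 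Remark~\ref{rem333} already asserts that such a $g$ can be chosen uniformly in $i$; the task of the proof is essentially to write it down.

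I would define $g$ on the Iwasawa basis by $g(r_i) := \sqrt{2}\,r_i$ and $g(s_i) := \tfrac{1}{\sqrt{2}}(r_i + s_i) = \sqrt{2}\,e_i$ for $i = 1,\ldots,n$, and extend linearly. Stability of $F_I$ under $g$ is immediate, since $g(r_i) \in \operatorname{span}(r_1,\ldots,r_i)$ and $g(s_{n-k+1}) \in \operatorname{span}(r_{n-k+1}, s_{n-k+1})$ both lie in the corresponding step of $F_I$. Using the canonical relations $b(r_i,s_j) = \delta_{ij}$ and $b(r_i,r_j) = b(s_i,s_j) = 0$, a short computation gives $b(g(r_i), g(s_j)) = \delta_{ij}$, $b(g(r_i), g(r_j)) = 0$, and $b(g(s_i), g(s_j)) = \tfrac{1}{2}(\delta_{ij} - \delta_{ij}) = 0$, so $g$ is symplectic and therefore $g \in B_I$. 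Since $g(s_{\sigma(i)}) = \sqrt{2}\,e_{\sigma(i)}$ for every $i$, the first $n$ subspaces of $g \cdot F_w$ and $F_{\psi(w)}$ agree; the maximal isotropy of both flags then forces $g \cdot F_w = F_{\psi(w)}$, as required.

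No substantial obstacle is present in the argument. The only point worth highlighting is the uniform scaling factor $\sqrt{2}$, which is exactly what makes $g$ simultaneously upper-triangular with respect to $F_I$ and symplectic; either normalization alone is obviously possible, but combining them into a single element of $B_I$ requires this specific choice.
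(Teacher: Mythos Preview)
Your proof is correct and follows the same underlying idea as the paper's: exhibit an element of $B_I$ carrying $F_w$ to $F_{\psi(w)}$. The paper argues vector-by-vector, invoking Remark~\ref{rem333} to choose coefficients so that each $b(e_i+e_{2n-i+1})$ lands on $e_i$, and then repeats this $n$ times; it leaves implicit the check that these local choices assemble into a single $b\in B_I$. You instead write down one global element $g$ (the uniform shear $r_i\mapsto\sqrt{2}\,r_i$, $s_i\mapsto\tfrac{1}{\sqrt{2}}(r_i+s_i)$) and verify directly that it is symplectic and stabilizes $F_I$. This is cleaner and makes the ``for all $i$ simultaneously'' claim in Remark~\ref{rem333} explicit. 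One minor quibble: with the concrete vectors $r_i=e_i-e_{2n-i+1}$, $s_i=e_i+e_{2n-i+1}$ and the paper's symplectic form one has $b(r_i,s_j)=2\delta_{ij}$ rather than $\delta_{ij}$, but this scalar factor is harmless for your argument since the verification that $g$ preserves $b$ goes through unchanged.
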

\begin{proof}
Let $w \in W_I$ be a generous permutation, and $F_w=w(F_I) \in \gamma^{cl}$ be the isotropic full flag associated to $w$ such that the first $n$-subspaces in the flag are
$$\{0\} \subset <v_{w_1}> \subset <v_{w_1},v_{w_2}> \subset ...... \subset <v_{w_1},...,v_{w_n}>$$
where $v_{w_i}=e_{j}\pm e_{2n-j+1}$. We will show that the orbit $B_I(F_w)$ contains the flag $F_{\psi(w)}$.
 For this purpose let $\tilde{w} \in W_{S} $ be the image of $w$ under the bijective map $\psi$ and let $Y_{\tilde{w}}$ be the isotropic flag associated to $\tilde{w}$, i.e., the flag $$Y_{\tilde{w}}:=(\{0\} \subset <e_{\tilde{w}_1}> \subset <e_{\tilde{w}_1},e_{\tilde{w}_2}> \subset ...... \subset <e_{\tilde{w}_1},...,e_{\tilde{w}_n}>)$$
We will show that $Y_{\tilde w} =b(F_w)$ for $b$ appropriately chosen in $B_I$. For this recall that since $w$ is a generous permutation it follows that $w_i<0$ for all $1\leq i\leq n$
, and therefore $B_I.(e_j+e_{2n-j+1})$, $j=1,\ldots ,n$, are the orbits of interest. By Remark \ref{rem333} the orbit $B_I.<e_{i}+e_{2n-i+1}>$ contains the points
$$y=b(e_i+e_{2n-i+1})=\lambda(e_i+e_{2n-i+1})+\ldots  +b_n(e_{n}+e_{n+1})+a_n(e_n-e_{n+1})+\ldots +a_1(e_{1}-e_{2n})$$
where $\lambda\neq 0$. Choose all constants to be $0$ except $\lambda=a_i=\frac{1}{2}$ , then $y=<e_i>=<e_{\tilde{w}(i)}>$.\\
We can repeat this step n-times to have the first half of the flag $Y_{\tilde{w}}$, then we can extend this flag to maximal $h$-isotropic flag, . Therefore, the flag $Y_{\tilde{w}}$ is constructed.
\end{proof}
\begin{thm}\label{thmSP}\textbf{(Generous Permutation Theorem)}.
The following are equivalent
\begin{description}
	\item[~~~~~~~(i)]  $w$ is generous.
  \item[~~~~~~~(ii)]  $B_I(F_w)\cap D_\alpha \not=\emptyset$ for some $\alpha $.
  \item[~~~~~~~(iii)] $B_I(F_w)\cap D_\alpha \not=\emptyset$ for every $\alpha $.
		\end{description}
Furthermore, under any of these conditions, for every $\alpha $ the intersection $B_I(F_w)\cap C_\alpha$
 contains a $T_S$ fixed point.
\end{thm}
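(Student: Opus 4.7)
The plan is to establish the cycle of implications (i)$\Rightarrow$(iii)$\Rightarrow$(ii)$\Rightarrow$(i), and then derive the $T_S$-fixed point assertion from the Fixed Point Theorem of Chapter~3. The forward implications (i)$\Rightarrow$(iii)$\Rightarrow$(ii) will be the extension of the preceding proposition; the backward implication (ii)$\Rightarrow$(i), which I expect to be the serious step, will be a contrapositive argument using the explicit $B_I$-orbit formulas of \S\ref{S3.2} together with the $h$-isotropy of the Lagrangian $L=\langle r_1,\ldots,r_n\rangle$.

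For (i)$\Rightarrow$(iii), I would upgrade the construction in the preceding proposition from a single $\alpha$ to every $\alpha$. The key point, already visible in Remark \ref{rem333}, is that a general element $b(e_i+e_{2n-i+1})$ has the form $(\lambda+a_i)e_i+(\lambda-a_i)e_{2n-i+1}+\cdots$, so the choice of sign of $a_i$ relative to $\lambda$ makes the leading vector proportional either to $e_i$ (giving $h$-norm $+1$) or to $e_{2n-i+1}$ (giving $h$-norm $-1$). Carrying out the recursive construction of the proposition's proof with this sign choice at each of the $n$ steps produces, for any prescribed $\alpha\in\mathbb{Z}_2^n$, an element $b_\alpha\in B_I$ such that $b_\alpha(F_w)$ equals the $T_S$-fixed flag $F_{\tilde w_\alpha}$ whose signature is exactly $\alpha$. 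In particular $B_I(F_w)\cap C_\alpha\ni F_{\tilde w_\alpha}$, so (iii) and (ii) follow, and the final $T_S$-fixed point assertion is established simultaneously.

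The implication (iii)$\Rightarrow$(ii) is trivial. For (ii)$\Rightarrow$(i), I would argue by contradiction. Suppose $w$ is not generous and pick the smallest $j\leq n$ with $w_j>0$; then $v_{w_j}$ is an $r$-vector $e_k-e_{2n-k+1}$. By the $B_I$-orbit description in \S\ref{S3.2}, any $b\in B_I$ sends $v_{w_j}$ into $L=\langle r_1,\ldots,r_{|w_j|}\rangle\subset\langle r_1,\ldots,r_n\rangle$, which is totally $h$-isotropic. The remaining vectors $b.v_{w_i}$ for $i<j$ are images of $s$-vectors, which do range outside $L$. The goal is to show that regardless of how one chooses $b$, the $h$-form on $V_j(b(F_w))$ must be degenerate, contradicting $b(F_w)\in D_\alpha$. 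The main obstacle here is that one must control not just the individual vectors but the full Gram matrix of $h|_{V_j}$; the argument should exploit the upper-triangular structure of $B_I$ with respect to the basis (\ref{basisI}) to show that the image $b.v_{w_j}\in L$ is $h$-orthogonal to a subspace of $V_j$ of codimension at most one, forcing a one-dimensional radical.

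Finally, under the equivalent conditions one obtains $B_I(F_w)\cap D_\alpha\not=\emptyset$ for every $\alpha$, hence $B_I(F_w)\cap C_\alpha\not=\emptyset$ by the reduction recalled in the introduction. The Fixed Point Theorem \ref{corcor} then guarantees that each connected component of this intersection contains a $T_S$-fixed point, yielding the final clause of the theorem. I anticipate that the hardest bookkeeping is the rank control in (ii)$\Rightarrow$(i); everything else is a direct adaptation of the proposition and of Chapter~3.
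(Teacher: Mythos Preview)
Your argument for (i)$\Rightarrow$(iii) and the simultaneous construction of $T_S$-fixed points matches the paper's exactly: vary the sign of $a_i$ relative to $\lambda$ in $b(e_i+e_{2n-i+1})$ to land on $e_i$ or $e_{2n-i+1}$ according to $\alpha$. The trivial implication and the final clause are also fine.

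The gap is in (ii)$\Rightarrow$(i). Your plan is to show that $h\vert_{V_j}$ is degenerate at the first bad index $j$, with $b.v_{w_j}$ lying in (or near) its radical. This fails. Take $n=3$ and $w=(-1,2,-3)$, so $j=2$ and $v_{w_2}=e_2-e_5=r_2$. Then $b.v_{w_1}=\lambda_1 s_1+b_2 s_2+b_3 s_3+(\text{$r$-terms})$ and $b.v_{w_2}=\lambda_2 r_2+c_1 r_1$. One computes $h(r_\ell,s_m)=2\delta_{\ell m}$, whence
\[
h(b.v_{w_2},b.v_{w_1})=2\lambda_2\overline{b_2}+2c_1\overline{\lambda_1},
\]
which is generically nonzero. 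Since $h(b.v_{w_2},b.v_{w_2})=0$, the Gram matrix of $h\vert_{V_2}$ has determinant $-\lvert h(b.v_{w_1},b.v_{w_2})\rvert^2\neq 0$, so $h\vert_{V_2}$ is nondegenerate for such $b$. The observation that $b.v_{w_j}$ is $h$-orthogonal to a codimension-one subspace of $V_j$ is vacuous (every vector is) and does not force a radical. The obstruction to $b(F_w)\in D_\alpha$ is real but does not live at level $j$ in the Hermitian Gram matrix in any uniform way.

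The paper's argument for (ii)$\Rightarrow$(i) is different and avoids this difficulty: it works on $C_\alpha$ rather than $D_\alpha$, using the split-basis description of base cycles together with the \emph{symplectic} form $b$ (not $h$). If $g(F_w)\in C_\alpha$ there is a split basis $(\xi_i)$; since $g(w_j)=\lambda(e_k-e_{2n-k+1})+(\text{lower $r$-terms})$ with $\lambda\neq 0$, making $\xi_j$ split forces $e_k$ or $e_{2n-k+1}$ to already occur among the $\xi_i$ with $i<j$. But then $b(\xi_i,\xi_j)\neq 0$ (or $b(\xi_\kappa,\xi_\delta)\neq 0$ if both occur), contradicting the $b$-isotropy of $V_j$. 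This yields $S_w\cap C_\alpha=\emptyset$ for all $\alpha$, and the implication $S_w\cap D_\alpha\neq\emptyset\Rightarrow S_w\cap C_\alpha\neq\emptyset$ finishes the job. You should replace your $h$-degeneracy plan with this symplectic-isotropy argument.
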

\begin{proof}
\begin{description}
	\item[(iii)$\Rightarrow$ (ii)] Trivial direction.
	\item[(ii)$\Rightarrow$ (i)] We assume by contradiction that $w$ is not a generous permutation and will show that $S_w \cap D_{\alpha} = \emptyset,$ for all $\alpha$, i.e. $S_w$ has no $T_S$-fixed points\footnote{See Corollary \ref{fixthm}}. Recall that the complex bilinear form $b$ has been defined to satisfy the following orthogonality condition:
$$b(e_i,e_{2n-i+1})=1\ \text{and} \ b(e_i,e_k)=0\  \text{for} \ k\not=2n-i+1\,.$$
 Let $w \in W_I$ be a non-generous permutation. This implies that for some $j\le n$ the sign of $w_j$ is positive.
Let $(w_1,\ldots ,w_{2n})$ be the basis of the $w$-permuted full flag $F_w$ and
suppose that for some $j\le n$ the sign of $w_j$ is not negative.  Thus for every $g\in B_I$
$$
g(w_j)=\lambda (e_k-e_{2n-k+1})+\sum_{\ell <k}c_j(e_\ell-e_{2n-\ell+1})\,.
$$
Observe that $g(w_j)$ is $h$-isotropic and therefore if $j=1$, then, since $\lambda \not=0$, it follows that
$g(F_w)\not\in C_\alpha $ for all $\alpha $.

\bigskip\noindent
Now suppose that $j>1$ and suppose that $g(F_w)$ is in some $C_\alpha $.  This implies in particular that
for every $i<j$ there are linear combinations
$$
\xi_i=g(w_i)+\sum_{\ell < i}c_{i\ell}g(w(\ell))\,.
$$
which when expressed in the standard basis contain only vectors from $V^-$ or $V^+$.
For $g(F_w)$ to be in $C_\alpha $ the same must be true for $g(w_j)$.  This means that there
is a linear combination
$$
\xi_j=g(w_j)+\sum_{i<j} a_i \xi_i
$$
which has the same property.  Hence, $e_k$ or $e_{2n-k+1}$ (or both) must be among the vectors with
non-zero coefficients in the $\xi_i$ for $i<k$.  If both appear, e.g., in $\xi_\kappa $ and $\xi_\delta $,
then $b(\xi_\kappa,\xi_\delta )=1$, contrary to the flag $g(F_w)$ being maximally $b$-isotropic.  If only
one appears, e.g., $e_k$ in $\xi_\kappa $, then $e_{2n-k+1}$ appears in $\xi_j$ and
$b(\xi_\kappa ,\xi_j)=1$ which is again a contradiction. Hence the intersection is empty and this completes the proof.\\

\item[(i)$\Rightarrow$ (iii)] Let $w$ be a generous permutation. Then $w_i<0$ for all $1\leq i\leq n$ and $B_I.(e_j+e_{2n-j+1})$, $j=1,\ldots ,n$, are the orbits of interest. From Remark \ref{rem333} it follows that the orbit $B_I.<e_{i}+e_{2n-i+1}>$ contains the point
$$y=b(e_i+e_{2n-i+1})=\lambda(e_i+e_{2n-i+1})+\ldots  +b_n(e_{n}+e_{n+1})+a_n(e_n-e_{n+1})+\ldots +a_1(e_{1}-e_{2n})$$
where $\lambda\neq 0$. Then we can choose all constants to be $0$ except $\lambda=a_i=\frac{1}{2}$ if the corresponding sign in $\alpha$ is $-$ , then $y=<e_i>=<e_{\tilde{w}_i}>$. If the corresponding sign in $\alpha$ is $+$ , then we can choose all constants to be $0$ except $\lambda=-a_i=\frac{1}{2}$  and therefore $y=<e_{2n-i+1}>=<e_{\tilde{w}(i)}>$. Note that the vectors $e_i$  and $e_{2n-i+1}$ are of negative and positive norms, respectively.

\bigskip\noindent
In summary, we have constructed a set of $T_S$-fixed points in the orbits $S_w$ for
$w$ generous with no two belonging to the same flag domain. By directly checking we see that all possible signatures have been obtained and therefore every flag domain has non-empty intersection with one such $S_w$.
\end{description}
\end{proof}

 \begin{rem}
Assume that we have an intersection point of the Schubert variety $S_w$ with the flag domain $D_\alpha $ given by the flag associated to the basis $(e_{\tilde{w}_1},e_{\tilde{w}_2},...,e_{\tilde{w}_n})$. Therefore this point belongs to the open orbit
$D_{\alpha}$ given by $\alpha=(\alpha_1,\alpha_2,...,\alpha_n)$ where $\alpha_i=+$ if $\tilde{w}_i>0$ and $\alpha_i=-$ if $\tilde{w}(i)<0$. In this case if $\tilde{w}_i<0$, then
$e_{\tilde{w}_i}=e_{2n-w(i)+1}$.\qed
\end{rem}
\begin{rem}
 The basis which defines the flag $F_I$ in the closed orbit of $SP(2n,\mathbb{R})$ is almost the same basis as that in Brecan's thesis \cite{Bre} for the case $SU(p,q)$, and the bases which define the base cycles is
 the same in both cases. Thus in principle Brecan's algorithms and our description for $Sp(2n,\mathbb{R})$ should be similar. In  Brecan's thesis \cite{Bre} for the case $SU(n,2n)$ if the number $2n-i+1$ stays at the left of the number
 $i$ for all $1 \leq i \leq n$ in the one line notation of the permutation $w$, then the Schubert variety $S_w$ has nonempty intersection with some base cycles. If we translate this condition to our style of writing the permutations with negatives, the number $2n-i+1$ will translates to $-i$.
So the condition becomes : If the number $-i$  stays at the left of the number $i$ for all $1 \leq i \leq n$ in the one line notation of the permutation $w$, then the Schubert variety $S_w$ has nonempty intersection with some base cycles. Recall that in our permutation only $i$ or $-i$ appear in  the first half which we used to write the permutations, see $\S 2.2.3$. \qed
\end{rem}


Recall that the base cycle $C_{\alpha}$ is the set
$$C_{\alpha} =\{F \in Z : V_i =(V_i \cap E^-) \oplus ( V_i \cap E^+) , 1 \leq i \leq 2n\}.$$
Since $C_{\alpha}=K/(K \cap B_{z_0})$, where $z_0$ is an appropriate point in an open $G_0$-orbit [i.e. $G_0.z_0$ is open in $Z$], the dimension of $C_{\alpha}$ is $\frac{n(n-1)}{2}$ and Iwasawa-Schubert varieties dual to $C_{\alpha}$ has dimension $\frac{n(n+1)}{2}$.

\bigskip\noindent

The final step in this section is to determine the Schubert varieties of complementary dimension to that of the base cycle $C_{\alpha}$.

\subsection*{The length of the elements of ${\displaystyle S_{n}\ltimes \mathbb{Z}_2^{n}}$}
For dimension computations, let us state how can we compute the length of elements $w \in {\displaystyle S_{n}\ltimes \mathbb{Z}_2^{n}}$ relative to our notation for the Weyl group elements.
\begin{lem}
Fix $w \in {\displaystyle S_{n}\ltimes \mathbb{Z}_2^{n}}$, construct an $\tilde{w} \in {\displaystyle S_{n}\ltimes \mathbb{Z}_2^{n}}$ by the following algorithm:
\begin{enumerate}
	\item Start from left to right in $w$, using simple reflections, place all positive numbers in $w$ step by step in a sequence of $n-$empty boxes beginning from
	the first one in $\tilde{w}$, in the same order as they appeared in $w$.
		\item From left to right in $w$ replace a negative number with its absolute value in the $n-$empty boxes starting from right to left.
\end{enumerate}
  If $\tilde{w}=(\tilde{w}(1),\tilde{w}(2),...,\tilde{w}(n))$, then define $L(\tilde{w})=\frac{n^2-n}{2}-number ~of~ \{\tilde{w}(i): i< k ~and~ \tilde{w}(i) < \tilde{w}(k)\}$
  , and if we have $m$ negative signs in $w$ in the following positions $j_1,j_2,...,j_{m}$, then define $f(w)=\sum_{i=1}^m [(n-j_{i})]$.It follows that the length of $w$ is
  $$l(w)=L(\tilde{w})+ f(w)+m$$
\end{lem}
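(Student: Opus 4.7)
The plan is to identify $l(w)$ with the Coxeter length of $w \in W_I \cong W(C_n)$ with respect to the simple reflections $s_1, \ldots, s_n$ determined by the Iwasawa-Borel $B_I$, where $s_i$ for $i < n$ is the adjacent transposition and $s_n$ is the sign exchange $v_n \leftrightarrow v_{n+1}$ (equivalently, the sign flip at position $n$ when $w$ is written in one-line notation). Under this convention, I would prove the formula by induction on $m$, the number of negative signs in $w$.

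The base case $m = 0$ reduces the identity to $l(w) = L(\tilde{w})$ for $w = \tilde{w} \in S_n$, which is the classical number-of-inversions formula for the length in the symmetric group. For the inductive step, let $j_m$ be the position of the rightmost negative sign of $w$, and set $w' := w\, s_{j_m} s_{j_m + 1} \cdots s_{n-1} s_n$. Since all entries of $w$ at positions $j_m + 1, \ldots, n$ are positive by choice of $j_m$, right-multiplication by this product first transports the negative entry $w_{j_m}$ rightward to position $n$ via adjacent swaps and then flips its sign there. Consequently $w'$ has its negative signs exactly at the positions $j_1, \ldots, j_{m-1}$ of $w$. A direct check of the algorithm defining $\tilde{w}$ shows $\tilde{w}' = \tilde{w}$, so $L(\tilde{w}') = L(\tilde{w})$; additionally $f(w') = f(w) - (n - j_m)$ and $m(w') = m - 1$. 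Applying the inductive hypothesis gives
\[
l(w') = L(\tilde{w}) + f(w) - (n - j_m) + (m - 1).
\]

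The remaining step is to verify that $l(w) - l(w') = n - j_m + 1$, i.e., that the inserted word $s_{j_m} s_{j_m + 1} \cdots s_{n-1} s_n$ is a strictly descending chain acting on $w$. At each intermediate stage the negative value occupies some position $i \in \{j_m, j_m + 1, \ldots, n-1\}$ with a positive value at $i + 1$; a short case analysis shows that right-multiplication by $s_i$ is then a descent, because swapping an adjacent sign pattern of the form $(-,+)$ to $(+,-)$ decreases $f$ by one while leaving $L(\tilde{w})$ and $m$ unchanged. At the final step the negative value sits at position $n$, making $s_n$ a descent as well. Summing these $(n - j_m) + 1$ unit descents yields the required length drop, and combining with the inductive expression for $l(w')$ gives $l(w) = L(\tilde{w}) + f(w) + m$. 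The principal obstacle throughout is the precise bookkeeping of how the algorithm rebuilds $\tilde{w}'$ from $w'$ versus $\tilde{w}$ from $w$; once that invariance is established, the descent verifications and the induction close routinely.
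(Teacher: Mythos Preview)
Your inductive strategy is essentially the same decomposition the paper uses (transport a negative entry to the last slot via adjacent swaps, flip it with $s_n$, then recurse), only run in reverse: the paper builds $w$ up from the identity by an explicit word of length $L(\tilde w)+f(w)+m$, while you peel negatives off one at a time. Your verification that $\tilde w' = \tilde w$, $f(w') = f(w)-(n-j_m)$, and $m'=m-1$ is correct.

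There is, however, a genuine circularity in your descent argument. You write that each $s_i$ is a descent ``because swapping an adjacent sign pattern of the form $(-,+)$ to $(+,-)$ decreases $f$ by one while leaving $L(\tilde w)$ and $m$ unchanged.'' That only shows the \emph{right-hand side} of the claimed formula drops by one; to conclude the \emph{length} drops by one you would need the very identity you are proving. The fix is immediate but must be stated independently of the formula: with simple roots $\alpha_i = e_i - e_{i+1}$ ($i<n$) and $\alpha_n = 2e_n$, a right descent at $i<n$ occurs iff $w(\alpha_i)$ is a negative root. When $w_i<0<w_{i+1}$ one has $w(e_i - e_{i+1}) = -e_{|w_i|} - e_{w_{i+1}} = -(e_{|w_i|}+e_{w_{i+1}})$, which is negative; and when the entry at position $n$ is negative, $w(2e_n)$ is negative. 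So each of the $n-j_m+1$ reflections is indeed a descent, and your induction closes.

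Once repaired, your argument is in fact tighter than the paper's: the paper exhibits a word of the stated length but never checks it is reduced, whereas your descent chain establishes exactly that.
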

\begin{proof}
 The length of the permutation $w \in S_{n}\ltimes \mathbb{Z}_2^{n}$ is the minimal number of simple reflection which define $w$. To compute this note that we have two
 kind of reflections , the first $n-1$ reflections are the simple reflections in $S_n$ and the last one is the reflection which flips the sign. This means that if we want
 to flip the sign of $w(j)$, we should move $w(j)$ from its position to the last position and then flip its sign and then return it back to its position. More precisely,
consider the word $w_0=(123...n)$ and let $w$ be the word where $w_{j_i},w_{j_2},...,w_{j_m}$ in the positions $j_1<j_2<...<j_m, ~1\leq j_i \leq n$, are negatives. To
construct $w$ from $w_0$, we first flip the signs for the numbers $w_{j_i},w_{j_2},...,w_{j_m}$. For this purpose we move each number ,starting from $w_{j_m}$,
to the last position and then flip the sign of it. Then we apply the simple reflection to the positive numbers to put them in the same order as they appear in the word $w$. In this way the sum of all these movements and flips is exactly $L(\tilde{w})+m$. The last step is to move each $w_{j_i}$ to its original position in $w$ and denote the total number of these movements by $f(w)$. It then follows that $l(w)=L(\tilde{w})+ f(w)+m$.
 \end{proof}

\begin{ex}
Let $ w=((-12)5(-34) \in S_{5}\ltimes \mathbb{Z}_2^{5}$, then by following the above remark we have
$$(-12)5(-34)\Longrightarrow 254(-1-3)\Longrightarrow 254(31)$$
so $w=(-12)5(-34)$ and $\tilde{w}=254(31)$, then $L(\tilde{w})=7$ and $f(w)=5$. \\
Hence $l(w)=7+5+2=14$
\end{ex}

\begin{defn}\label{Super}
 The element  $w_0=(-n -(n-1)....-2-1) \in S_n \ltimes \mathbb{Z}_2^{n}$ is called \textbf{Super generous permutation} and has length $\frac{n(n+1)}{2}$.
\end{defn}
\bigskip\noindent
\begin{thm}\label{unique}
There exists a unique Iwasawa Schubert variety $S_w$ of dimension $\frac{n(n+1)}{2}$. It is given by the super generous permutation $w_0=(-n -(n-1)....-2-1)$.
\end{thm}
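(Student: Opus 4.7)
The plan is to reduce the statement, via Theorem~\ref{thmSP} and the length formula just proved, to the fact that the longest element of $S_n$ is unique. First I would invoke the Generous Permutation Theorem: since the Iwasawa-Schubert varieties under consideration in this chapter are those which have nonempty intersection with the base cycles, the Weyl element $w$ must be generous, i.e.\ of the form $w = (-a_1,\ldots,-a_n)$ with $(a_1,\ldots,a_n) \in S_n$. The task thus becomes: identify, among generous permutations, the unique one of length $\frac{n(n+1)}{2}$.

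For such a $w$ the inputs to the length formula are explicit. Every position carries a negative, so $m = n$ and the negative positions are $j_i = i$, giving $f(w) = \sum_{i=1}^{n}(n-i) = \frac{n(n-1)}{2}$. The construction of $\tilde{w}$ skips the first step (no positive entries are present) and, reading $w$ from left to right, fills the empty boxes in positions $n, n-1, \ldots, 1$ successively with $a_1, a_2, \ldots, a_n$. Hence $\tilde{w}$ is precisely the reversal of $(a_1,\ldots,a_n)$. Substituting into $l(w) = L(\tilde{w}) + f(w) + m$ gives
$$ l(w) \;=\; L(\tilde{w}) + \tfrac{n(n-1)}{2} + n \;=\; L(\tilde{w}) + \tfrac{n(n+1)}{2}, $$
so $l(w) = \frac{n(n+1)}{2}$ if and only if $L(\tilde{w}) = 0$.

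Now $L(\tilde{w}) = 0$ forces the number of pairs $i < k$ with $\tilde{w}(i) < \tilde{w}(k)$ to equal $\frac{n(n-1)}{2}$, i.e.\ $\tilde{w}$ must be strictly increasing, which happens only for $\tilde{w} = (1,2,\ldots,n)$. Undoing the reversal yields $(a_1,\ldots,a_n) = (n, n-1, \ldots, 1)$, and hence $w = (-n,-(n-1),\ldots,-1) = w_0$. Existence is then immediate from the same formula: plugging $w_0$ back in gives $l(w_0) = 0 + \frac{n(n-1)}{2} + n = \frac{n(n+1)}{2}$, as defined in Definition~\ref{Super}.

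I expect no serious obstacle. The only point requiring care will be to unpack the construction of $\tilde{w}$ correctly in the all-negative case, and to be transparent that the uniqueness claim is meant within the class of generous permutations: without this restriction several Weyl elements of $W_I = S_n \ltimes \mathbb{Z}_2^n$ can share the length $\frac{n(n+1)}{2}$, but by Theorem~\ref{thmSP} only the generous ones are relevant to the cycle intersection problem that this chapter addresses.
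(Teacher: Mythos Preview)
Your proof is correct and follows essentially the same approach as the paper: restrict to generous permutations, observe that for such $w$ the length formula gives $l(w)=L(\tilde w)+\frac{n(n+1)}{2}$, and conclude that $L(\tilde w)=0$ forces $\tilde w=(1,2,\ldots,n)$ and hence $w=w_0$. Your version is in fact more explicit than the paper's (computing $f(w)$ and identifying $\tilde w$ as the reversal of $(a_1,\ldots,a_n)$), and your closing remark that uniqueness is meant within the class of generous permutations is a useful clarification.
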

\begin{proof}
Define the set $W_{gen}$ to be the set of all generous permutations, i.e. the set of all permutations  ${\displaystyle w \in S_{n}\ltimes \mathbb{Z}_2^{n}}$ such that all signs
in $w$ are negative. Then by using the above formula to compute the length of $w$ , it is clear that $f(w)+n=\frac{n(n+1)}{2}$ for all $w \in W_{gen}$. Thus the shortest element of $W_{gen}$ has $l(\tilde{w})=0$, i.e. $\tilde{w}$ is the shortest element in $S_n$ which is unique and equal to $(12...n)$. Therefore there is a unique element $w_0 \in W_{gen}$ with length $\frac{n(n+1)}{2}$ which is $(-n,\ldots,-1)$.\\
\end{proof}
\section{Intersection points of Schubert duality}
  Assuming that $w_0\in W_I$ is the super generous permutation, then $S_{w_0}$ is of complementary dimension to the cycles, and the intersection $S_{w_0} \cap C_{\alpha}$ is in fact only one point which is $T_S$-fixed (see Corollary \ref{cor4.2.2}). It should be a simple matter to compute all
 such intersection points.  The argument in the case of $Sp_{2n}(\mathbb{R})$ goes as
 follows.

 \bigskip\noindent
 Since $w=(-n,\ldots ,-1)$, the flag basis corresponding to $w(F_I)$
 $$
 (\hat{v}_n,\ldots \hat{v}_1, v_1,\ldots v_n)
  $$
 where $v_k=e_k-e_{2n-k+1}$ and $\hat{v}_k=e_k+e_{2n-k+1}$.
 \begin{prop}
Let $w$ be a super generous permutation, and let the $\varepsilon $-basis $(\varepsilon_1,\ldots ,\varepsilon_n)$ of $T_S$-eigenvectors be the basis of an intersection point in $S_w \cap C_{\alpha}$, then the $\varepsilon $-basis is given by
 $\varepsilon_k=e_{n-k+1}$ or $e_{n+k-1}$, depending on the signature $\alpha $.
 \end{prop}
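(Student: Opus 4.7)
By Theorem \ref{unique} a super generous permutation has length $\frac{n(n+1)}{2}$, so $S_w$ has dimension complementary to the base cycle $C_\alpha$. Corollary \ref{fixthm} then forces every point of $S_w \cap C_\alpha$ to be $T_S$-fixed. Consequently each vector $\varepsilon_k$ of the split basis of such an intersection point is a $T_S$-eigenvector, that is, a scalar multiple of a standard basis vector $e_\ell$.

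For super generous $w$ the first $n$ entries of the flag basis of $F_w = w(F_I)$ are $\hat v_{n-k+1} = e_{n-k+1} + e_{n+k}$, so that, by the formula recorded in $\S\ref{S3.2}$,
$$
b \cdot v_{w_k} = \eta_k(e_{n-k+1} + e_{n+k}) + \zeta_k(e_{n-k+1} - e_{n+k}) + B_k,
$$
where $B_k$ is a linear combination of standard basis vectors $e_\ell$ with $\ell \notin \{n-k+1,\, n+k\}$, and the leading coefficient obliges $(\eta_k, \zeta_k) \neq (0,0)$. Hence the projection $K_k$ of $b \cdot v_{w_k}$ onto the plane $\langle e_{n-k+1}, e_{n+k}\rangle$ is a nonzero vector.

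The plan is to run an induction on $k$. Suppose $\varepsilon_j \in \{e_{n-j+1},\, e_{n+j}\}$ for every $j < k$; the base $k=1$ is immediate because $\varepsilon_1 = b \cdot v_{w_1} = K_1$ lies in $\langle e_n, e_{n+1}\rangle$ and must be a single standard basis vector. Writing
$$
\varepsilon_k = b \cdot v_{w_k} + \sum_{j<k} c_j \varepsilon_j,
$$
observe that $B_k$ involves only $e_\ell$ with $\ell \notin \{n-k+1,\, n+k\}$ by construction, and the $\varepsilon_j$ contributions involve only $e_\ell$ with $\ell \in \{n-j+1,\, n+j\}$ for some $j<k$, which is disjoint from $\{n-k+1,\, n+k\}$ since the pairings $(n-j+1) + (n+j) = 2n+1$ are parameterised bijectively by $j$. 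Therefore the projection of $\varepsilon_k$ onto $\langle e_{n-k+1}, e_{n+k}\rangle$ is still exactly the nonzero vector $K_k$. Since $\varepsilon_k$ must itself be a single $e_\ell$, and any $e_\ell$ with $\ell \notin \{n-k+1, n+k\}$ projects to $0$ in this plane, we are forced to $\varepsilon_k \in \{e_{n-k+1},\, e_{n+k}\}$, corresponding respectively to $\eta_k = \zeta_k$ and $\eta_k = -\zeta_k$.

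Finally, Proposition \ref{Base cycle} and the splitting $\mathbb{C}^{2n} = E_+ \oplus E_-$ with $E_+ = \langle e_1,\ldots,e_n\rangle$ and $E_- = \langle e_{n+1},\ldots,e_{2n}\rangle$ dictate the choice: $\varepsilon_k = e_{n-k+1}$ when $\alpha_k = +$, and $\varepsilon_k = e_{n+k}$ when $\alpha_k = -$. The main obstacle is the projection argument in the inductive step, whose cleanliness rests on verifying that no earlier $\varepsilon_j$ or any term of $B_k$ can contaminate the $\langle e_{n-k+1}, e_{n+k}\rangle$ coordinates; this is precisely the disjointness of the index pairs $\{n-j+1,\, n+j\}$ guaranteed by the super generous shape of $w$.
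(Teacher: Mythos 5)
Your proof is correct and follows the paper's strategy: project $\varepsilon_k$ onto the coordinate plane $\langle e_{n-k+1},\, e_{n+k}\rangle$, use the triangularity of $B_I$ to guarantee the nonzero leading term $K_k$, and then invoke $T_S$-fixedness. The one point worth noting is that your version of the key step is actually a touch more careful than the paper's: by writing $\varepsilon_k = b\cdot v_{w_k} + \sum_{j<k}c_j\varepsilon_j$ and appealing to the inductive hypothesis $\varepsilon_j\in\{e_{n-j+1},e_{n+j}\}$, the earlier terms manifestly contribute nothing to the plane $\langle e_{n-k+1},e_{n+k}\rangle$; the paper instead asserts that $e_{n-k+1}$ and $e_{n+k}$ simply do not occur in $b\cdot v_{w_j}$ for $j<k$, which taken literally fails (they can appear via the term $v_{n-k+1}=e_{n-k+1}-e_{n+k}$ in the triangular expansion), and the correct observation is that they appear only in this balanced combination, so the projection is still proportional to $K_k$; your induction sidesteps the issue cleanly. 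You have also implicitly corrected a typo in the proposition's statement — since the $k$-th flag vector is $\hat v_{n-k+1}=e_{n-k+1}+e_{n+k}$, the second alternative is $e_{n+k}$, not $e_{n+k-1}$.
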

 \begin {proof}
 This is a consequence of the following:
 \begin {enumerate}
\item
 $$
 b.\hat{v}_k=\lambda_k(e_k+e_{2n-k+1})+a_k(e_k-e_{2n-k+1})+B_k=K_k+B_k
 $$
 \item
 $\lambda_k\not=0$
 \item
 The intersection $S_w\cap C_\alpha $ is a flag defined by $T_S$-eigenvectors.
 \end {enumerate}
 From the expression for $\hat{v}_k$ it is immediate that all of the possibilities in the statement
 occur.    Furthermore, since the $\lambda_k$ are non-zero, for every $k$ a non-zero contribution from $K_k$
 occurs in the sum
 $$
 \varepsilon_k=\sum_{j\le k} c_{kj}b.v_{w_j}\,.
 $$
 Since $e_{n-k+1}$ and $e_{n+k-1}$ do not occur in $b.v_{w_j}$ for $j<k$, it follows that $\varepsilon_k=K_k+B_k$
 in the standard basis (See $\S 2.2.2$).  Finally, since $\varepsilon_k$ is a $T_S$-eigenvector, it follows that $\varepsilon_k=K_k$ and
 is of the type in the the statement of the proposition.
 \end {proof}
\bigskip\noindent

As a result, if $w$ is super generous, then the set of all intersection points of $(S_w \cap C_{\alpha})$ can be defined to be the following:
$$\text{Supset}(w):=\{F_{\tilde{w}}:\tilde{w} \in W \text{and} ~\tilde{w} \text{~obtained~by~change~none,~some~or~all of~the~numbers}~-i~by~i\}$$
where $F_{\tilde{w}}$ is the maximally $b$-isotropic flag associated to the standard basis $\mathbb{C}^{2n}$ given in (\ref{basisC}). Here $\tilde{w}$ gives us a point in flag domain and to know the signature of this flag domain just replace $i$ by $+$ and $-i$ by $-$.\\

 One more remark, the number of flag domains in $G/B$ is $2^n$. So as a results from the above discussion we have the following corollary
 \begin{cor}\label{cor4.2.2}
If $w$ is the super generous permutation in $W$, then
\begin{enumerate}
   \item The total number of intersection points is $2^n$, each with different signature.
	\item The intersection $S_w \cap C_{\alpha}$ is exactly one $T_S$ fixed point.
\end{enumerate}
 \end{cor}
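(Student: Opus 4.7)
The plan is to assemble the corollary from the preceding proposition together with the enumeration of flag domains. The proposition says that every intersection point in $S_{w_0}\cap C_\alpha$ is defined by an $\varepsilon$-basis of $T_S$-eigenvectors with $\varepsilon_k\in\{e_{n-k+1},e_{n+k-1}\}$ for each $k=1,\ldots,n$. So the first step will be to verify that each of the $2^n$ sequences of choices actually produces a point in $S_{w_0}$: this amounts to noting that the flag they define is maximally $b$-isotropic (the paired indices $n-k+1$ and $n+k-1$ are $b$-dual and only one of each pair is used in the first $n$ positions, the others being forced into positions $n+1,\ldots,2n$), and that the construction of $\varepsilon_k=K_k$ out of $b.\hat v_k$ given in the proposition realizes each such flag in the $B_I$-orbit $S_{w_0}$.

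Next I would read off the signature of each such $T_S$-fixed flag. Recall from $\S\,2.2.4$ that $E^+=\langle e_1,\ldots,e_n\rangle$ and $E^-=\langle e_{n+1},\ldots,e_{2n}\rangle$, so for each $k$ the choice $\varepsilon_k=e_{n-k+1}\in E^+$ or $\varepsilon_k=e_{n+k-1}\in E^-$ contributes the corresponding sign to the sign-change vector $\alpha\in\mathbb{Z}_2^n$. Hence the $2^n$ sequences of choices correspond bijectively to the $2^n$ sign vectors $\alpha$, and in particular no two of our intersection points lie in the same flag domain.

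Finally I would combine this with the parametrization of flag domains from $\S\,2.2.4$, where it was established that the set $\mathcal C$ of closed $K$-orbits (equivalently, of flag domains in $Z$) is in bijection with $\mathbb Z_2^n$, hence has cardinality $2^n$. Since our $2^n$ fixed points already represent $2^n$ distinct signatures, each $D_\alpha$ is hit exactly once. By Corollary \ref{fixthm} the whole intersection $S_{w_0}\cap C_\alpha$ is contained in $\mathrm{Fix}(T_S)$, so the unique $T_S$-fixed point we have found is the entire intersection. This yields both statements (1) and (2).

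The main point to be careful about is step two, namely that each of the $2^n$ candidate $\varepsilon$-bases genuinely defines a flag in $S_{w_0}\cap C_\alpha$ for its prescribed $\alpha$ rather than only necessarily lying in $S_{w_0}$; but this follows directly because the coefficients $\lambda_k,a_k$ in the expansion $b.\hat v_k=\lambda_k(e_k+e_{2n-k+1})+a_k(e_k-e_{2n-k+1})+B_k$ are free parameters with $\lambda_k\neq 0$, so one can solve successively to choose $b\in B_I$ realizing any prescribed sign pattern, and the splitting condition $V_i=(V_i\cap E^-)\oplus(V_i\cap E^+)$ characterizing $C_\alpha$ is automatic once all $\varepsilon_k$ are standard basis vectors.
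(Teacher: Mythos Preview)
Your proposal is correct and follows the same route as the paper, which treats the corollary as an immediate consequence of the preceding proposition together with the count of $2^n$ flag domains; you simply make the counting argument and the appeal to Corollary~\ref{fixthm} explicit. One small caution: you inherit an index slip from the proposition's statement---the two options for $\varepsilon_k$ should be $e_{n-k+1}$ and $e_{n+k}$ (so that they are genuinely $b$-dual, summing to $2n+1$), not $e_{n+k-1}$; with the indices as written your claim that the pair is $b$-dual fails at $k=1$, but this is cosmetic and does not affect the argument.
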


 \begin{ex}
  The set of all intersection points of the Schubert variety parametrized by the element $w=(-3-1-2)$ is $Supset(w)=\{(-3-1-2).\tilde{z},(3-1-2).\tilde{z},(-31-2).\tilde{z},(-3-12).\tilde{z},(31-2).\tilde{z},(-312).\tilde{z},(3-12).\tilde{z},(312).\tilde{z}\}$ . So $S_w$ intersects the following flag domains: $(---),(+--),(-+-),(--+),(++-),(+-+),(++-),(+++)$.
 \end{ex}


\chapter{Cycle intersection for $SO^*(2n)$}

Here we deal with the homogeneous space $Z$ of maximally isotropic full flags of the complex orthogonal symmetric group $G=SO(2n,\mathbb{C})$ equipped with the action of the real form $G_0=Sp(2n,\mathbb{R})$.


 \section{Conditions for $S_w \cap C_{\alpha} \neq \emptyset$}

\begin{defn}
An element $w=(w_1,w_2,...,w_n) \in S_n \ltimes \mathbb{Z}_2^{n-1}$ is called \textit{\textbf{a dense permutation}}\footnote{Recall that we define a special way of denoting to write the Weyl elements in section $\S 2.2.3$.} if $w_i<0$ for all $1\leq i\leq n$ if $n$ is even, and $w_i<0$ for all $1\leq i\leq n-1$ and $w_n>0$ if $n$ is odd .
\end{defn}
\noindent
This definition will aid in determining the Weyl elements which parametrize  the Schubert varieties that have nonempty intersection with base cycles of flag domains which contain $T_S$-fixed points.

\begin{rem}\label{444}
When discussing the $B_I$-orbits of the base point $F_I$ it is important to explicitly understand the orbits $B_I.(e_i-e_{n+i})$ and $B_I.(e_i+e_{n+i})$. If $F_I$ and $B_I$ are as above, then the orbits of interest are $B_I.(e_i\pm e_{n+i})$, $i=1,\ldots ,n$ if $n$ is even, and $B_I.(e_i\pm e_{n+i})$, $i=1,\ldots ,n-1$ and $B_I.(e_{\frac{n+1}{2}})$ and $B_I.(e_{n+\frac{n+1}{2}})$ if $n$ is odd . In this case the orbits $B_I.(e_i+ e_{n+i})$ , $B_I.(e_i - e_{n+i})$, $B_I.(e_{\frac{n+1}{2}})$ and $B_I.(e_{n+\frac{n+1}{2}})$ have points of the forms
\begin{description}
	\item[ $\bullet$ If $n$ is even:] then for $i=1,\ldots ,n$
$$b(e_i-e_{n+i})=\lambda(e_i-e_{n+i-1})+\ldots  +b_n(e_{n}-e_{2n})+a_n(e_n+e_{2n})+\ldots +a_1(e_{1}+e_{n+1}),$$
$$b(e_i+e_{n+i})=\lambda(e_i+e_{n+i})+a_{i-1}(e_{i-1}+e_{n+i-1})+\ldots +a_1(e_{1}+e_{n+1}), \text{with} \lambda\not=0.$$
\item[ $\bullet$ If $n$ is odd:] then for $i=1,\ldots ,n-1$
$$b(e_i-e_{n+i})=\lambda(e_i-e_{n+i-1})+\ldots  +b_{n-1}(e_{n}-e_{2n})+b_n e_{n+\frac{n+1}{2}}+ a_n e_{\frac{n+1}{2}}+a_{n-1}(e_n+e_{2n})+$$ $~~~~~~~~~~~~\ldots +a_1(e_{1}+e_{n+1}),$
$$b(e_i+e_{n+i})=\lambda(e_i+e_{n+i})+a_{i-1}(e_{i-1}+e_{n+i-1})+\ldots +a_1(e_{1}+e_{n+1}),$$
$$b(e_{\frac{n+1}{2}})=\lambda e_{\frac{n+1}{2}}+ a_{n-1}(e_n+e_{2n})+\ldots +a_1(e_{1}+e_{n+1}),$$
and
$$b(e_{n+\frac{n+1}{2}})=\lambda e_{n+\frac{n+1}{2}}+ a_n e_{\frac{n+1}{2}}+a_{n-1}(e_n+e_{2n})+\ldots +a_1(e_{1}+e_{n+1}),$$
with $\lambda\not=0$.
\end{description}
 Note that in the above orbits, if $b$ is chosen appropriately, then we can arrange $b(e_i-e_{n+i})=e_i$ or $b(e_i-e_{n+i})=e_{n+i}$ for all $i$. This plays a role in the proof of Theorem \ref{thmSO}. \qed
\end{rem}

\bigskip\noindent
Depending on $n$ being odd or even, the base point $F_I\in \gamma^{c\ell}$ is that associated to the following ordered basis:
\begin{description}
	\item[$\bullet$ If $n$ is even:]
	\begin{equation}\label{basisSOeven}
e_1+ie_{n+1},e_2+ie_{n+2}...,e_n+ie_{2n},e_n-ie_{2n},...,e_2-ie_{n+2},e_1-ie_{n+1}.
\end{equation}
\item[$\bullet$ If $n$ is odd:]
\begin{equation}\label{basisSOodd}
e_1+ie_{n+1},...,e_n+ie_{2n},e_{\frac{n+1}{2}},e_{n+\frac{n+1}{2}},e_n-ie_{2n},...,e_1-ie_{n+1}.
\end{equation}
\end{description}
\noindent
and the base point which defines the flag domains and the base cycles is the flag associated to the ordered basis
\begin{equation}\label{basisCS}
e_1,e_2,e_3,......,e_{2n-1},e_{2n}
\end{equation}
These two bases define two maximal tori which are conjugate and the corresponding Weyl groups $W_I$ and $W_{S}$ are isomorphic. \\
Let $\psi$ be the bijective map between  Weyl groups $W_I$ and $W_{S}$ which is defined as follows:
\begin{description}
	\item[$\bullet$ If $n$ is even:] $\psi(\mp (i))=\pm i,$ for all $1\leq i\leq n$.
	\item[$\bullet$ If $n$ is odd:] $\psi(\mp (i))=\pm i,$ for all $1\leq i\leq n-1$ and $\psi(\pm n)=\pm n$.
\end{description}

\begin{prop}
If $w$ is a dense permutation, then the flag $F_{\psi(w)}$ belongs to the orbit $B_I(F_w)$.
\end{prop}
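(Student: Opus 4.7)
The plan is to mimic the proof of the analogous proposition for $\mathrm{Sp}(2n,\mathbb{R})$, proceeding by explicit construction and splitting into the cases $n$ even and $n$ odd. Given a dense permutation $w \in W_I$, I will first write the ordered basis $(v_{w_1},\ldots,v_{w_n})$ of the first $n$ vectors of the flag $F_w = w(F_I)$ relative to (\ref{basisSOeven}) or (\ref{basisSOodd}). Density of $w$ ensures that, for $n$ even, each $v_{w_i}$ is of the form $e_j - ie_{n+j}$ for some $j$ (depending on $|w_i|$), and for $n$ odd the same holds for $i < n$ while $v_{w_n}$ is one of the middle basis vectors $e_{(n+1)/2}$ or $e_{n+(n+1)/2}$. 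The goal is to produce a single $b \in B_I$ which transports $F_w$ to $F_{\psi(w)}$, i.e.\ sends $\langle v_{w_1},\ldots,v_{w_i}\rangle$ onto $\langle e_{\tilde{w}_1},\ldots,e_{\tilde{w}_i}\rangle$ for each $i \leq n$, where $\tilde{w} := \psi(w)$.

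The next step is to exhibit such a $b$ coordinate by coordinate using the orbit descriptions in Remark \ref{444}. A typical point of $B_I \cdot (e_j - ie_{n+j})$ is of the form
\[
\lambda(e_j - ie_{n+j}) + \sum_{k > j} b_k (e_k - ie_{n+k}) + \sum_k a_k(e_k + ie_{n+k}),\qquad \lambda \neq 0,
\]
with possible extra contributions from the two middle vectors in the odd case. Setting every $b_k = 0$ and every $a_k = 0$ except $\lambda = a_j = \tfrac{1}{2}$ should produce $e_j$, while the choice $\lambda = -a_j = \tfrac{1}{2}$ should produce $-ie_{n+j}$. Via the convention $\psi(\mp i) = \pm i$, an appropriate sign selection therefore yields the required line $\langle e_{\tilde{w}_i}\rangle$ for every negative position of $w$. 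For the middle vector in the odd case, the orbits $B_I \cdot e_{(n+1)/2}$ and $B_I \cdot e_{n+(n+1)/2}$ recorded in Remark \ref{444} already have the leading standard basis vector intact, so setting the trailing parameters to zero matches $\psi(\pm n) = \pm n$.

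Finally, I would check that these position-by-position selections assemble into a single element of $B_I$. Since $B_I$ is upper-triangular with respect to the ordered basis of $F_I$, the parameters appearing at position $i$ in the orbit formulas involve only basis vectors preceding $v_{w_i}$ in the $F_I$-order, so they can be prescribed independently of those used at later positions; this produces a single $b \in B_I$ whose action on the first $n$ subspaces of $F_w$ agrees with those of $F_{\psi(w)}$, and maximal $b$-isotropy then forces the remaining $n$ subspaces to agree as well. The step I expect to require the most care is the compatibility check in the odd case, because the middle vectors $e_{(n+1)/2}$ and $e_{n+(n+1)/2}$ appear as secondary terms in several of the other orbit formulas of Remark \ref{444}; the parity split in the definition of a dense permutation, together with $\psi$ fixing the sign of $\pm n$, is precisely what allows these secondary contributions to be cleaned up without conflict.
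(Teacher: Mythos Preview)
Your approach is the paper's: both read off the orbit formulas of Remark~\ref{444}, set $\lambda=a_j=\tfrac12$ in $B_I\cdot(e_j-ie_{n+j})$ to produce $\langle e_j\rangle$, and iterate over the first $n$ positions before extending to a maximal isotropic flag (the paper writes out only the case $n$ even). One correction to your odd-$n$ bookkeeping: the density condition ``$w_i<0$ for $i<n$, $w_n>0$'' constrains \emph{positions}, not \emph{values}, so the middle vectors $e_{(n+1)/2}$, $e_{n+(n+1)/2}$ occur at whichever position carries the value $\pm n$ (not necessarily the last), while $v_{w_n}=r_{w_n}$ can perfectly well be some $e_j+ie_{n+j}$ with $j<n$; once this case split is straightened out, the same parameter selections from Remark~\ref{444} go through unchanged.
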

\begin{proof}
Without less of generality assume that $n$ is even. Let $w \in W_I$ be a dense permutation, denote the first $n$ subspaces of the associated maximally isotropic flag $F_w$ by
$$\{0\} \subset <u_{w_1}> \subset <u_{w_1},u_{w_2}> \subset ...... \subset <u_{w_1},...,u_{w_n}>$$
where $u_i=e_{j}\pm e_{n+j}$. We will show that the orbit $B_I(F_w)$ contains the flag $F_{\psi(w)}$.
 For this purpose let $\tilde{w} \in W_{S} $ be the image of $w$ under the bijective map $\psi$ and let $Y_{\tilde{w}}$ be the isotropic flag associated to $\tilde{w}$, i.e., the flag $$Y_{\tilde{w}}:=(\{0\} \subset <e_{\tilde{w}_1}> \subset <e_{\tilde{w}_1},e_{\tilde{w}_2}> \subset ...... \subset <e_{\tilde{w}_1},...,e_{\tilde{w}_n}>)$$
We will show that $Y_{\tilde w} =b(F_w)$ for $b$ appropriately chosen in $B_I$. For this recall that if $w$ is a dense permutation, then $w_i<0$ for all $1\leq i\leq n$
, and therefore $B_I.(e_j-ie_{n+j})$, $j=1,\ldots ,n$, are the orbits of interest. By Remark \ref{444} the orbit $B_I.<e_{j}-ie_{n+j}>$ contains the points
$$y=b(e_j-ie_{n+j})=\lambda(e_j-ie_{n+j})+\ldots  +b_n(e_{n}-ie_{2n})+a_n(e_n+ie_{2n})+\ldots +a_1(e_{1}+ie_{2n})$$
where $\lambda\neq 0$. Choose all constants to be $0$ except $\lambda=a_i=\frac{1}{2}$ , so that $y=<e_i>=<e_{\tilde{w}_i}>$.\\
We repeat this step n-times to construct the first half of the flag $Y_{\tilde{w}}$, and then extend this flag to maximal $h$-isotropic flag, . Therefore, the flag $Y_{\tilde{w}}$ is constructed.
\end{proof}
\begin{thm}\label{thmSO}\textbf{(Dense Permutation Theorem)}.
The following are equivalent
\begin{description}
	\item[~~~~~~~(i)]  $w$ is dense.
  \item[~~~~~~~(ii)]  $B_I(F_w)\cap D_\alpha \not=\emptyset$ for some $\alpha $.
  \item[~~~~~~~(iii)] $B_I(F_w)\cap D_\alpha \not=\emptyset$ for every $\alpha $.
		\end{description}
Furthermore, under any of these conditions, for every $\alpha $ the intersection $B_I(F_w)\cap C_\alpha$
 contains a $T_S$-fixed point.
\end{thm}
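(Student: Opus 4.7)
The plan is to mimic the three-step argument of Theorem \ref{thmSP} almost verbatim, the main differences being bookkeeping for the parity constraint inherent to $W_I = S_n\ltimes \mathbb{Z}_2^{n-1}$ and the handling of the two central basis vectors $e_{(n+1)/2}$ and $e_{n+\frac{n+1}{2}}$ when $n$ is odd.

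The implication (iii)$\Rightarrow$(ii) is trivial. For (ii)$\Rightarrow$(i) I argue contrapositively. If $w$ is not dense then either there is some $j_0 \le n-1$ with $w_{j_0} > 0$, or $n$ is odd with $w_n < 0$; in the latter case the even-number-of-minuses condition in $\mathbb{Z}_2^{n-1}$ forces at least one such $j_0 \le n-1$ as well, and when $n$ is even ``not dense'' already means $w_{j_0} > 0$ for some $j_0 \le n$. Thus in every situation a position with $w_{j_0} > 0$ is available in the first half. I then invoke the Fixed Point Theorem (Theorem \ref{corcor}) to reduce $B_I(F_w)\cap D_\alpha \neq \emptyset$ to the existence of a $T_S$-fixed point in $B_I(F_w)\cap C_\alpha$. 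The $b$-isotropy argument from Theorem \ref{thmSP} then transfers: the vector $v_{w_{j_0}}$ is of ``plus'' type, so $b.v_{w_{j_0}}$ carries a non-zero coefficient on some plus-type basis vector $r_k$. For a split linear combination $\xi_{j_0} = b.v_{w_{j_0}} + \sum_{i<j_0} a_i \xi_i$ to lie in $E^- \cup E^+$, the $b$-pairing partner of $r_k$ must appear with non-zero coefficient in some earlier $\xi_i$; pairing this $\xi_i$ with $\xi_{j_0}$ via $b$ then yields a non-zero value, contradicting the maximal $b$-isotropy of the flag.

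For (i)$\Rightarrow$(iii), I give an explicit construction of a $T_S$-fixed point in $S_w \cap C_\alpha$ for each signature $\alpha$. The proposition preceding the theorem already shows $F_{\psi(w)} \in B_I(F_w)$. By Remark \ref{444}, for each position $j$ the $B_I$-orbit of the associated basis vector of $F_w$ contains both $e_{|w_j|}$ and $e_{n+|w_j|}$, obtained by adding appropriate multiples of the preceding basis vectors (a plus-type contribution in the $n$ even case, or one of the central vectors $e_{(n+1)/2}, e_{n+\frac{n+1}{2}}$ when $n$ is odd and $j=n$). Choosing the target $e_{|w_j|}$ when $\alpha_j = +$ and $e_{n+|w_j|}$ when $\alpha_j = -$ assembles the first $n$ vectors of a basis for a $T_S$-fixed flag lying in $C_\alpha$; the flag is extended to a full $b$-isotropic one via the duality condition $V_{2n-i} = V_i^{\perp}$.

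The main obstacle I anticipate is the parity bookkeeping for $n$ odd: since $W_I$ admits only an even number of sign changes, the signs chosen to realize $\alpha$ on the first $n-1$ positions must be compatible with the constrained $n$-th position. I expect this to go through because the admissible $SO^*(2n)$-signatures themselves respect the same parity, but confirming this requires a careful case analysis separating $n$ even from $n$ odd, and this is the one step where the argument departs non-trivially from the symplectic blueprint.
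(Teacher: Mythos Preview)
Your proposal is correct and follows essentially the same three-step argument as the paper's own proof: the trivial implication, the contrapositive via the $b$-isotropy contradiction on split bases, and the explicit construction of a $T_S$-fixed point in each $C_\alpha$ using the orbit descriptions of Remark~\ref{444}. Your explicit reduction to a $T_S$-fixed point via Theorem~\ref{corcor} in the (ii)$\Rightarrow$(i) step is not strictly needed (the isotropy argument already rules out every split basis, fixed or not), and the parity concern you flag for $n$ odd is real but is exactly the bookkeeping the paper dismisses with ``the proof for $n$ odd goes analogously.''
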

\begin{proof}
\begin{description}
	\item[(iii)$\Rightarrow$ (ii)] Trivial direction.
	\item[(ii)$\Rightarrow$ (i)] We handle the case where $n$ is even. The proof for $n$ goes analogously. We assume by contradiction that $w$ is not a dense permutation and will show that $S_w \cap D_{\alpha}=\emptyset$ for all $\alpha $.  In particular, $S_w$ has no $T_S$-fixed points\footnote{See theorem \ref{TM1}}. Recall that the complex bilinear form $b$ is defined by the following conditions:
$$b(e_i,e_{n+i})=1\ \text{and} \ b(e_i,e_k)=0\  \text{for} \ k\not=n+i\,.$$
 Let $w \in W_I$ be a non-dense permutation. Let $(w_1,\ldots ,w_{2n})$ be the basis of the $w$-permuted full flag $F_w$ and
suppose that for some $j\le n$ the sign of $w_j$ is not negative.  Thus for every $g\in B_I$
$$
g(w_j)=\lambda (e_k+ie_{n+k})+\sum_{\ell <k}c_j(e_\ell+ie_{n+\ell})\,.
$$
Observe that $g(w_j)$ is $h$-isotropic and therefore if $j=1$, then, since $\lambda \not=0$, it follows that
$g(F_w)\not\in C_\alpha $ for all $\alpha $.

\bigskip\noindent
Now suppose that $j>1$ and suppose that $g(F_w)$ is in some $C_\alpha $.  This implies in particular that
for every $i<j$ there are linear combinations
$$
\varepsilon_i=g(w_i)+\sum_{\ell < i}c_{i\ell}g(w_\ell)\,.
$$
which when expressed in the standard basis contain only vectors from $V^-$ or $V^+$.
For $g(F_w)$ to be in $C_\alpha $ the same must be true for $g(w_j)$.  This means that there
is a linear combination
$$
\varepsilon_j=g(w_j)+\sum_{i<j} a_i \varepsilon_i
$$
which has the same property.  Hence, $e_k$ or $e_{n+k}$ (or both) must be among the vectors with
non-zero coefficients in the $\varepsilon_i$ for $i<k$.  If both appear, e.g., in $\varepsilon_\kappa $ and $\varepsilon_\delta $,
then $b(\varepsilon_\kappa,\varepsilon_\delta )=1$, contrary to the flag $g(F_w)$ being maximally $b$-isotropic.  If only
one appears, e.g., $e_k$ in $\varepsilon_\kappa $, then $e_{n+k}$ appears in $\varepsilon_j$ and
$b(\varepsilon_\kappa ,\varepsilon_j)=1$ which is again a contradiction. Hence the intersection is empty and this completes the proof.\\

\item[(i)$\Rightarrow$ (iii)] Again, since the two cases are dealt with analogously, we assume here that $n$ is even. Let $w$ be a dense permutation. Then $w_i<0$ for all $1\leq i\leq n$ and $B_I.(e_j-ie_{n+j})$, $j=1,\ldots ,n$, are the orbits of interest. From Remark \ref{444} it follows that the orbit $B_I.<e_{i}+ie_{n+i}>$ contains the point
$$y=b(e_j-ie_{n+j})=\lambda(e_j-ie_{n+j})+\ldots  +b_n(e_{n}-ie_{2n})+a_n(e_n+ie_{2n})+\ldots +a_1(e_{1}+ie_{n+1})$$
where $\lambda\neq 0$. Then we can choose all constants to be $0$ except $\lambda=a_j=\frac{1}{2}$.  It follows that $y=<e_j>=<e_{\tilde{w}_j}>$ if the corresponding sign in $\alpha$ is $-$. If the corresponding sign in $\alpha$ is $+$, then we can choose all constants to be $0$ except $\lambda=-a_j=\frac{1}{2}$ , and $y=<e_{n+j}>=<e_{\tilde{w}_j}>$. Note that the vectors $e_j$  and $e_{n+j}$ are of negative and positive norms, respectively. In summary, we have constructed a set of $T_S$-fixed points in the orbits $S_w$ for $w$ dense with no two belonging to the same flag domain. By directly checking we see that all possible signatures have been obtained and therefore every
flag domain has non-empty intersection with one such $S_w$.
\end{description}
\end{proof}
\noindent
By following the above proof we have the following corollary;

\begin{cor}\label{cor5.2.2}
If $w$ is a dense permutation, then for all $\alpha $ the intersection $S_w \cap C_{\alpha}$ contains exactly one $T_S$-fixed point and in particular is connected.
\end{cor}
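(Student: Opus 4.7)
The plan is to combine the explicit construction from the proof of Theorem \ref{thmSO} with the Fixed Point Theorem \ref{corcor}. Nonemptiness for every $\alpha$ is already part of Theorem \ref{thmSO}, so the two remaining tasks are uniqueness of the $T_S$-fixed point and connectedness of the intersection.

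First I would establish uniqueness. Every point of $S_w \cap C_\alpha$ is represented by a split basis $(\varepsilon_1,\ldots,\varepsilon_{2n})$ with each $\varepsilon_j$ lying entirely in $E^+$ or in $E^-$; at a $T_S$-fixed point, each $\varepsilon_j$ must moreover be a scalar multiple of a single standard basis vector $e_\ell$. Writing $\varepsilon_j = \lambda\, b.u_{w_j} + \sum_{i<j} c_i \varepsilon_i$ with $b \in B_I$ and using the description of the orbits $B_I.(e_k \pm i e_{n+k})$ given in Remark \ref{444}, the leading coefficient $\lambda$ is forced to be non-zero. For $w$ dense, every $u_{w_j}$ (with $j \le n$) is of the form $e_k - i e_{n+k}$ (together with the middle vectors in the odd case), so a $T_S$-eigenvector combination can only be a scalar multiple of $e_k$ or of $e_{n+k}$.

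Next I would observe that the signature $\alpha_j$ forces the choice between these two options: $e_k$ has negative $h$-norm and $e_{n+k}$ has positive $h$-norm, so exactly one of them is compatible with the prescribed sign $\alpha_j$. Thus at each step there is a unique admissible vector, and the split basis $(\varepsilon_1,\ldots,\varepsilon_n)$ is uniquely determined by $\alpha$; the remaining entries are then fixed by the maximally $b$-isotropic condition $V_{2n-i} = V_i^{\perp}$. This gives exactly one $T_S$-fixed point in $S_w \cap C_\alpha$.

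Finally, for connectedness I would invoke Theorem \ref{corcor}, which asserts that every connected component of $S_w \cap C_\alpha$ contains a $T_S$-fixed point. Combined with the uniqueness just established, the intersection has at most one component, and Theorem \ref{thmSO} guarantees it is nonempty, so it is connected. The main obstacle will be the odd case, in which the middle vectors $e_{(n+1)/2}$ and $e_{n+(n+1)/2}$ appear as separate entries in the basis defining $F_I$ and the Weyl group is only $S_n \ltimes \mathbb{Z}_2^{n-1}$ (even number of sign changes); one must verify that the parity constraint built into the dense condition $w_n > 0$ is precisely what is needed so that the forced signature choices at the middle positions neither over-determine nor under-determine the fixed point.
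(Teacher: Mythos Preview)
Your proposal is correct and is precisely the argument the paper has in mind: the paper gives no separate proof but simply writes ``By following the above proof we have the following corollary,'' and your plan---existence from Theorem~\ref{thmSO}\,(i)$\Rightarrow$(iii), uniqueness because the leading term $\lambda(e_{k}-ie_{n+k})$ with $\lambda\neq 0$ forces $\varepsilon_j\in\{e_{k},e_{n+k}\}$ and the sign $\alpha_j$ selects one, connectedness via Theorem~\ref{corcor}---is exactly the intended fleshing-out. One cosmetic slip: for $SO^*(2n)$ the form is $h(v,w)=\sum_{i\le n}v_i\bar w_i-\sum_{i>n}v_i\bar w_i$, so $e_k$ (for $k\le n$) is $h$-\emph{positive} and $e_{n+k}$ is $h$-\emph{negative}; this swap does not affect your argument.
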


 \begin{rem}
Assume that we have an intersection point given by the flag associated to the basis $(e_{\tilde{w}_1},e_{\tilde{w}_2},...,e_{\tilde{w}_n})$. Therefore this point belongs to the open orbit
$D_{\alpha}$ given by $\alpha=(\alpha_1,\alpha_2,...,\alpha_n)$ where $\alpha_j=+$ if $\tilde{w}_j>0$ and $\alpha_j=-$ if $\tilde{w}_j<0$. In this case if $\tilde{w}_j<0$, then $e_{\tilde{w}_j}=e_{n+w_j}$.\qed
\end{rem}

\bigskip\noindent
Recall that the complexification of the maximal compact subgroup $K$ of $G$ has a unique compact orbit in each open $G_0$-orbits which is a complex manifold (see \cite{W1}) and called the base cycle $C_0$.
Similarly as the case $SP(2n,\mathbb{R})$, the base cycle $C_0$ of the flag domain $D_{\alpha}$ of $SO^*(2n)$ is the set
$$C_0 =\{F \in Z : V_i= (V_i \cap E^-) \oplus (V_i \cap E^+) , 1 \leq i \leq 2n\}.$$
where the intersection dimensions are determined by $\alpha $. Since $C_0=K/(K \cap B_{z_0})$ where $z_0$ is any base point in $C_\alpha $, it follows that the dimension of $C_0$ is $\frac{n(n-1)}{2}$ and the Iwasawa Schubert variety must be of dimension $\frac{n(n-1)}{2}$.

\subsection{Length of the elements of ${\displaystyle S_{n}\ltimes \mathbb{Z}_2^{n-1}}$}
For dimension computations, let us state how can we compute the length of elements $w \in {\displaystyle S_{n}\ltimes \mathbb{Z}_2^{n-1}}$ relative to our notation for the Weyl group elements.
\begin{lem}
Fix $w \in W$, construct  $\tilde{w} \in W$ by the following algorithm:
\begin{enumerate}
	\item Start from left to right in $w$, using simple reflections place all positive numbers in $w$ step by step in a sequence of $n-$empty boxes beginning from the first one in $\tilde{w}$, in the same order as they appeared in $w$.
		\item From left to right in $w$ replace a negative number with its absolute value in the $n-$empty boxes starting from right to left.
\end{enumerate}
  If $\tilde{w}=(\tilde{w}_1,\tilde{w}_2,...,\tilde{w}_n)$, then define $L(\tilde{w})=\frac{n^2-n}{2}-\text{the cardinality of } \{\tilde{w}_j: j< k ~and~ \tilde{w}_j < \tilde{w}_k\}$
  , and if we have $2m$ negative signs in $w$ in the following positions $k_1,k_2,...,k_{2m}$, then define $f(w)=\sum_{j=1}^m [(2n-1-k_{2j}-k_{2j-1})]$. It follows that the length of $w$ is
  $$l(w)=L(\tilde{w})+ f(w)+m$$
\end{lem}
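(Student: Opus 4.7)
The plan is to mimic the proof of the preceding lemma for the Weyl group $S_n\ltimes\mathbb{Z}_2^n$ of type $C_n$, with the crucial modification that in type $D_n$ the ``extra'' simple reflection $s_n$ does not flip a single sign at the end of the word but rather sends $(\ldots,a_{n-1},a_n)\mapsto (\ldots,-a_n,-a_{n-1})$, i.e.\ it flips two signs simultaneously. This is exactly why only an even number of negative entries occurs in $w$, and why the negatives in $w$ are grouped into $m$ consecutive pairs $(k_{2j-1},k_{2j})$ in the statement of the lemma.

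I would begin from the identity word $w_0=(1,2,\ldots,n)$ and describe an explicit factorization of $w$ into simple reflections by processing the $m$ pairs of negatives from right to left. For a pair at positions $k_{2j-1}<k_{2j}$, I would first use $(n-k_{2j})$ transpositions $s_{k_{2j}},s_{k_{2j}+1},\ldots,s_{n-1}$ to bring the entry destined for position $k_{2j}$ to the last slot, then use $(n-1-k_{2j-1})$ further simple transpositions to bring the entry destined for position $k_{2j-1}$ to slot $n-1$, then apply $s_n$ once to flip the signs of both of them, and finally reverse the transpositions to return the two entries to slots $k_{2j-1},k_{2j}$. A routine count shows that the ``return'' moves do not in fact double the cost: as in the $C_n$ proof, once the signs are flipped, the transpositions used on the way out have already been absorbed into the construction of $\tilde{w}$, so the genuine contribution per pair is exactly $(2n-1-k_{2j-1}-k_{2j})+1$, explaining the summand of $f(w)$ together with the $+1$ that accumulates to $m$. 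Finally, the positive numbers need to be arranged into the order prescribed by $\tilde{w}$, and this costs $L(\tilde{w})$ transpositions, which is the inversion count given by $\tfrac{n^2-n}{2}$ minus the number of non-inversions of $\tilde{w}$.

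This chain of moves shows $l(w)\le L(\tilde{w})+f(w)+m$. For the reverse inequality I would use the standard length formula for signed permutations of type $D_n$, namely
\[
l(w)=\#\{(i,j):i<j,\ w_i>w_j\}+\#\{(i,j):i<j,\ -w_i>w_j\},
\]
and verify that this matches $L(\tilde{w})+f(w)+m$ by tracking how each of the two types of ``inversions'' is distributed among the purely permutational part of $w$ and its sign-change part. The first summand above collects the inversions among equally-signed entries and produces $L(\tilde{w})$, while the second summand, specialised to pairs in which at least one of $w_i,w_j$ is negative, produces $f(w)+m$ once one sorts the contributions by pair of negatives.

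The main obstacle will be the bookkeeping in the reverse inequality: one must show carefully that the specific pairing of negatives used in the statement (namely the left-to-right pairing $(k_1,k_2),(k_3,k_4),\ldots$) is the one that minimises the number of simple reflections, and that no ``mixed'' strategy (pairing non-adjacent negatives, or interleaving sign flips with permutation moves) can beat the count $L(\tilde{w})+f(w)+m$. This is handled by a standard exchange argument on reduced words, exploiting the braid relations of type $D_n$ to push every occurrence of $s_n$ to the right of the word, after which the remaining word is a reduced expression in $S_n$ applied to $\tilde{w}$.
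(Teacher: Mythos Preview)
Your approach is correct and in fact more careful than the paper's own argument. The paper's proof is essentially a verbatim copy of the type-$C_n$ proof: it speaks of moving \emph{each} negative entry to the last slot and ``flipping its sign'', treating the negatives one at a time rather than in pairs, and the indexing in the proof ($m$ negatives at positions $j_1,\ldots,j_m$) does not even match the indexing in the statement ($2m$ negatives at positions $k_1,\ldots,k_{2m}$). Moreover, the paper only produces an explicit factorization, i.e.\ the upper bound $l(w)\le L(\tilde w)+f(w)+m$, and never argues that this word is reduced.

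You, by contrast, correctly isolate the type-$D_n$ feature that $s_n$ flips two signs simultaneously, process the negatives in the natural left-to-right pairs, and obtain the summand $(2n-1-k_{2j-1}-k_{2j})+1$ per pair exactly as in the statement. Your proposal to close the argument by matching against the closed length formula
\[
l(w)=\#\{i<j:\,w_i>w_j\}+\#\{i<j:\,-w_i>w_j\}
\]
is the right move and makes the proof genuinely two-sided; this is something the paper omits. The exchange/braid argument you sketch at the end is an alternative route to the same lower bound, but the direct comparison with the closed formula is cleaner and avoids the pairing-optimality bookkeeping you flagged as the main obstacle. In short: same strategy as the paper for the upper bound, but with the $D_n$ mechanics handled correctly and with an added minimality check the paper lacks.
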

\begin{proof}
 The length of the permutation $w \in S_{n}\ltimes \mathbb{Z}_2^{n-1}$ is the minimal number of simple reflections which define $w$. To compute this note that we have two kinds of reflections
 , the first $n-1$ reflections are the simple reflections in $S_n$ and the last one is the reflection which flips the sign. This means that if we want to flip the sign of $w_j$,
we should move $w_j$ from its position to the last position and then flip its sign and then return it back to its position. More precisely,
consider the word $w_0=(123...n)$ and let $w$ be the word where $w_{j_i},w_{j_2},...,w_{j_m}$ in the positions $j_1<j_2<...<j_m, ~1\leq j_i \leq n$, are negatives. To construct $w$ from $w_0$, we first flip the signs for the numbers $w_{j_i},w_{j_2},...,w_{j_m}$. For this purpose we move each number ,starting from $w_{j_m}$, to the last position and then flip the sign of it. Then we apply the simple reflection to the positive numbers to put them in the same order as they appear in the word $w$. In this way the sum of all these movements and flips is exactly $L(\tilde{w})+m$. The last step is to move each $w_{j_i}$ to its original position in $w$ and denote the total number of these movements by $f(w)$. It then follows that $l(w)=L(\tilde{w})+ f(w)+m$.
 \end{proof}

\begin{ex}
Let $ w=((-12)5(-34)6 \in S_{6}\ltimes \mathbb{Z}_2^{5}$, then by following the above proof we have
$$(-12)5(-34)-6 \Longrightarrow 2546(-1-3)\Longrightarrow 2546(31)$$
so $w=(-12)5(-34)6$ and $\tilde{w}=2546(31)$, then $L(\tilde{w})=9$ and $f(w)=6$.\\
Hence $l(w)=9+6+1=16$.
\end{ex}

\bigskip\noindent
\begin{defn}
Let  $w=(-n -(n-1)....-2-1) \in S_n \ltimes \mathbb{Z}_2^{n-1}$ if $n$ is even  and $w=(n -(n-1)....-2-1) \in S_n \ltimes \mathbb{Z}_2^{n-1}$ if $n$ is odd. any of these two elements called  \textbf{Super dense} and has dimension $\frac{n(n-1)}{2}$.
\end{defn}

\bigskip\noindent

As the case of the real form $SP(2n,\mathbb{R})$, by direct calculation of the dimension of the dense permutations we get the following  result.
\bigskip\noindent

\begin{thm}
There exists a unique Iwasawa Schubert variety $S_w$ of dimension $\frac{n(n-1)}{2}$. It is given by the super dense permutation $\tilde{w}=(-n -(n-1)....-2-1)$ if $n$ is even and by $\tilde{w}=(n -(n-1)....-2-1)$ if $n$ is odd.
\end{thm}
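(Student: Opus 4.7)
The plan is to imitate the argument used for the super generous element in the $Sp(2n,\mathbb R)$ case (Theorem \ref{unique}), i.e.\ to restrict to the dense permutations (the only ones that can parametrize Schubert varieties with non-empty intersection with a cycle, by Theorem \ref{thmSO}) and then exploit the length formula of the preceding lemma to pin down the unique dense $w$ of length $\tfrac{n(n-1)}{2}$. Concretely I would split into the parities of $n$, evaluate $f(w)+m$ for a generic dense $w$, and show that this quantity is independent of the particular dense $w$ and equal to $\tfrac{n(n-1)}{2}$. Then the length formula $l(w)=L(\tilde w)+f(w)+m$ reduces the problem to the uniqueness of the minimal element of $S_n$, namely $\tilde w=(1\,2\,\dots n)$ with $L(\tilde w)=0$.

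The first main computation would be the even case. Here every position of $w$ carries a minus sign, so the negative positions are $k_1=1,\ldots,k_n=n$ with $m=n/2$. Plugging into $f(w)=\sum_{j=1}^{n/2}(2n-1-k_{2j}-k_{2j-1})=\sum_{j=1}^{n/2}(2n-4j)$ and simplifying gives $f(w)=\tfrac{n^2}{2}-n$, hence $f(w)+m=\tfrac{n(n-1)}{2}$. In the odd case exactly one position is positive and $m=(n-1)/2$; the same telescoping (with the positions of the minuses forming the pairs, noting the last unpaired \emph{positive} position does not contribute to $f$) yields $f(w)=2k^{2}$ with $k=(n-1)/2$, so again $f(w)+m=\tfrac{(n-1)^{2}}{2}+\tfrac{n-1}{2}=\tfrac{n(n-1)}{2}$. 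Both computations together give $l(w)=L(\tilde w)+\tfrac{n(n-1)}{2}$ uniformly on dense $w$.

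The next step is the rigidity: since $L(\tilde w)\ge 0$ and vanishes only for the identity permutation $\tilde w=(1,2,\dots,n)$, the condition $l(w)=\tfrac{n(n-1)}{2}$ forces $\tilde w=\mathrm{id}$. Finally I would invert the algorithm that produces $\tilde w$ from $w$: in the even case all entries of $w$ are negative, so the absolute values are read into $\tilde w$ from right to left, giving $w=(-n,-(n-1),\ldots,-2,-1)$; in the odd case the single positive slot is filled first and the remaining negatives are read right to left, yielding the claimed super dense element.

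The only place I expect any real friction is the odd case: one has to check that the position chosen for the unique positive entry does not alter the value of $f(w)$, i.e.\ that $f(w)+m=\tfrac{n(n-1)}{2}$ regardless of which coordinate carries the plus sign. This requires verifying that the natural pairing of the negative positions into consecutive pairs, used in the even computation, can be carried out uniformly once the positive slot is removed; equivalently one checks that summing $2n-1-k_{2j}-k_{2j-1}$ over any consistent pairing of $\{1,\ldots,n\}\setminus\{\text{positive slot}\}$ gives the same total. Once that small combinatorial verification is in place, the uniqueness statement follows immediately and the explicit form of $w$ is read off exactly as in the proof of Theorem \ref{unique}.
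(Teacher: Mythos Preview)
Your even--$n$ argument is correct and is exactly what the paper does. The odd case, however, contains a genuine gap, and it is precisely the ``friction'' you flagged in the last paragraph: the quantity $f(w)+m$ is \emph{not} independent of the position of the positive entry, so the verification you propose would fail.

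Concretely, for odd $n$ a dense permutation has the \emph{value} $n$ carrying the unique plus sign, at some position $p\in\{1,\dots,n\}$ (note: it is the value $n$, not the position $n$, that is positive -- otherwise the super dense element in the statement would not even be dense). The negative positions are then $\{1,\dots,n\}\setminus\{p\}$, and since $\sum_i k_i=\tfrac{n(n+1)}{2}-p$ one gets
\[
f(w)+m \;=\; m(2n-1)-\Bigl(\tfrac{n(n+1)}{2}-p\Bigr)+m \;=\; \tfrac{n(n-3)}{2}+p,
\]
which genuinely depends on $p$ (for $n=3$ it runs over $1,2,3$ as $p=1,2,3$). So your displayed identity $l(w)=L(\tilde w)+\tfrac{n(n-1)}{2}$ is false for odd $n$, and the conclusion $\tilde w=\mathrm{id}$ cannot hold anyway: since the unique positive value is $n$, the algorithm always places $n$ in box~$1$ of $\tilde w$, forcing $\tilde w_1=n$ and hence $L(\tilde w)\ge n-1$, never $0$. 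Running your inversion with $\tilde w=\mathrm{id}$ would output $(-n,-(n-1),\dots,-2,1)$, which is not the super dense element (and is not even dense under the intended definition).

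The correct odd--$n$ argument couples the two observations above. From $\tilde w_1=n$ one has $L(\tilde w)\ge n-1$, and from the formula $f(w)+m=\tfrac{n(n-3)}{2}+p$ one has $f(w)+m\ge \tfrac{n(n-3)}{2}+1$. Adding,
\[
l(w)\;=\;L(\tilde w)+f(w)+m\;\ge\;(n-1)+\tfrac{n(n-3)}{2}+1\;=\;\tfrac{n(n-1)}{2},
\]
with equality forcing simultaneously $p=1$ and $L(\tilde w)=n-1$, i.e.\ $\tilde w=(n,1,2,\dots,n-1)$. Inverting the algorithm with $p=1$ and this $\tilde w$ gives $w=(n,-(n-1),\dots,-2,-1)$, which is the claimed super dense element. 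This is what the paper's (rather tersely worded) odd case is getting at when it says the equality holds ``only if the entry $n$ is sitting in the first box''.
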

\begin{proof}
Assume firstly that n is even, then consider the set $W_{den}$ to be the set of all dense permutations, i.e. the set of all permutations  ${\displaystyle w \in S_{n}\ltimes \mathbb{Z}_2^{n-1}}$ such that all signs in $w$ are negative. Then, using our formula to compute the length of $w$ it is clear that $f(w)+n=\frac{n(n-1)}{2}$ for all $w \in S_{n}\ltimes \mathbb{Z}_2^{n-1}$. Thus the shortest element of $W_{den}$ has $l(\tilde{w})=0$, i.e. $\tilde{w}$ is the shortest element in $S_n$ which is unique and equal to $(12...n)$. Therefore there is a unique element $w \in W_{den}$ with length $\frac{n(n-1)}{2}$ which is $(-n -(n-1)....-2-1)$.\\

Secondly, let $n$ is odd and let $\tilde{W}_{den}$ to be the set of all dense permutations, i.e. the set of all permutations  ${\displaystyle w \in S_{n}\ltimes \mathbb{Z}_2^{n-1}}$ such that all signs in $w$ are negative except the sign of the entry $n$ is positive. Then, using our formula to compute the length of $w$, it is clear that $f(w)+n=\frac{n(n-1)}{2}$ for all $w \in S_{n}\ltimes \mathbb{Z}_2^{n-1}$ only if the entry $n$ sitting in the first box of the permutation. Thus shortest element of $\tilde{W}_{den}$ has $l(\tilde{w})=0$. Then the shortest element in $S^n$ is $\tilde{w}$ which is unique and equal to $(12...n)$ which implies that we have a unique element $w \in \tilde{W}_{den}$ with length $\frac{n(n-1)}{2}$ which is $(n -(n-1)....-2-1)$.\\
\end{proof}
\section{Intersection points of Schubert duality}
 Assuming that the element $w_0\in W_I$ is $w_0=(-n -(n-1)....-2-1)$ if $n$ is even and  $w_0=(n -(n-1)....-2-1)$ if $n$ is odd , we have shown that $S_w$ is of complementary dimension to the cycles, and the intersection $S_w \cap C_{\alpha}$ is  in fact only one $T_S$-fixed point (See corollary \ref{cor5.2.2}). Here we describe all intersection points.  The argument in the case of $SO^*(2n)$ , which is similar to that in the case $SP(2n,\mathbb{R})$, goes as follows.

 \bigskip\noindent
 Since $w=(-n,\ldots ,-1)$, the flag basis corresponding to $w_0$
 $$
 (\hat{v}_n,\ldots \hat{v}_1, v_1,\ldots v_n)
  $$
 where $v_k=e_k+ie_{n+k}$ and $\hat{v}_k=e_k-ie_{n+k}$.
 \begin{prop}
If $w$ is a super dense permutation, and the $\varepsilon $-basis $(\varepsilon_1,\ldots ,\varepsilon_n)$ of $T_S$-eigenvectors defines the intersection point in $S_w \cap C_{\alpha}$, then $\varepsilon_k=e_{k}$ or $e_{n+k}$, depending on the signature $\alpha $.
 \end{prop}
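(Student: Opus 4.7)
The plan is to follow, step by step, the argument given for the analogous proposition in Chapter 4 for $Sp(2n,\mathbb{R})$, with only the substitutions $e_m + e_{2n-m+1} \mapsto e_m - i e_{n+m}$ and $e_m - e_{2n-m+1} \mapsto e_m + i e_{n+m}$ reflecting the different bilinear form. I will carry out the argument for $n$ even; the case $n$ odd is identical except that position $1$ of the $F_{w_0}$-basis now carries $v_n$ rather than $\hat{v}_n$, and the two middle vectors $e_{(n+1)/2}$ and $e_{n + (n+1)/2}$ play the same role at the middle step of the induction.

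First, I would apply the formulas from Section 3.2, case $SO^*(2n)$, to write, for each $b \in B_I$ and each $j$, the image
$$
b.u_j \;=\; b.\hat{v}_{n-j+1} \;=\; \lambda_j\, \hat{v}_{n-j+1} + a_j\, v_{n-j+1} + B_j \;=\; K_j + B_j,
$$
where $K_j$ is supported on the coordinate plane $\langle e_{n-j+1}, e_{2n-j+1}\rangle$, the remainder $B_j$ involves only basis vectors off this plane, and the leading coefficient $\lambda_j \neq 0$ by the Borel-triangular structure of $B_I$ on the $F_I$-basis.

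Next, I would work modulo $\langle v_1, \ldots, v_n\rangle$. Since this subspace is $V_n$ of $F_I$, it is stabilised by $b$, so $b$ induces an upper-triangular action on the quotient $\mathbb{C}^{2n}/\langle v_1, \ldots, v_n\rangle$ with respect to the ordered basis $[\hat{v}_n], [\hat{v}_{n-1}], \ldots, [\hat{v}_1]$. Hence the projection $\pi(V_k^{\varepsilon}) = \pi\!\left(b\cdot\langle u_1, \ldots, u_k\rangle\right)$ equals $\langle [\hat{v}_n], \ldots, [\hat{v}_{n-k+1}]\rangle$. On the other hand, $V_k^{\varepsilon}$ is $T_S$-stable, hence spanned by standard basis vectors $e_{m_1}, \ldots, e_{m_k}$. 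Since $\pi(e_m) = \tfrac{1}{2}[\hat{v}_m]$ for $m \le n$ and $\pi(e_{n+\ell}) = -\tfrac{1}{2i}[\hat{v}_\ell]$ for $\ell \le n$, the associated indices $\ell_i \in \{m_i,\, m_i - n\}$ must be exactly $\{n-k+1, \ldots, n\}$.

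An induction on $k$ then pins $\varepsilon_k$ down: the new index $\ell$ added at step $k$ is $n-k+1$, so $\varepsilon_k$ is a scalar multiple of either $e_{n-k+1}$ or $e_{2n-k+1}$, with the choice determined by whether $\varepsilon_k$ lies in $E^-$ or in $E^+$, that is, by the $k$-th sign entry of $\alpha$. The main obstacle is verifying the quotient bookkeeping: one needs that $b$ truly descends to a well-defined upper-triangular action on the $\hat{v}$-quotient, which uses the full Borel-triangular structure on the $F_I$-basis from Chapter 2 together with the transparency of the decomposition $K_j + B_j$ from Section 3.2. Once this is secured, the eigenvector rigidity of $T_S$ forces the stated dichotomy immediately.
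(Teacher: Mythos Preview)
Your quotient argument is correct and actually sharper than the paper's own proof. The paper argues directly in the standard $e$-basis: it writes $b.\hat{v}_k = K_k + R_k$ with $K_k$ supported on $\langle e_k, e_{n+k}\rangle$, then asserts that ``$e_k$ and $e_{n+k}$ do not occur in $b(w(j))$ for $j<k$,'' and concludes that the $T_S$-eigenvector $\varepsilon_k$ must equal $K_k$. Taken literally in the $e$-basis this assertion is not correct, because the $v$-part of the Borel action on $\hat{v}_m$ contributes terms $a_j v_j = a_j(e_j + ie_{n+j})$ for all $j$, so every $e_m$ can appear in $b.u_j$. What is really true is that the $\hat{v}_{n-k+1}$-\emph{component} of $b.u_k$ cannot be cancelled by $b.u_j$ with $j<k$. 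Your passage to the quotient $\mathbb{C}^{2n}/\langle v_1,\ldots,v_n\rangle$ makes exactly this point rigorous: killing the $B_I$-stable $v$-span leaves an honest upper-triangular action on the ordered $[\hat{v}]$-basis, the projection is injective on $V_k^\varepsilon$ (since $b\cdot\langle \hat v_n,\ldots,\hat v_{n-k+1}\rangle$ meets $\langle v_1,\ldots,v_n\rangle$ trivially), and the reduced index of $\varepsilon_k$ is forced.

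One further remark: your computation yields $\varepsilon_k\in\{e_{n-k+1},\,e_{2n-k+1}\}$, not the $\{e_k,\,e_{n+k}\}$ printed in the proposition. This is an indexing slip in the paper---its proof conflates the flag-position index $k$ with the subscript of $\hat{v}$, writing ``$\hat v_k$'' where the $k$-th flag vector is $\hat v_{n-k+1}$. Your formula is the one consistent with the actual position-$k$ vector $\hat{v}_{n-k+1}=e_{n-k+1}-ie_{2n-k+1}$ and with the $Sp(2n,\mathbb{R})$ analogue in Chapter~4.
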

 \begin {proof}
 This is a consequence of the following:
 \begin {enumerate}
\item
 $$
 \hat{v}_k=\lambda_k(e_k-e_{n+k})+a_k(e_k+ie_{n+k})+R_k=K_k+R_k
 $$
 \item
 $\lambda_k\not=0$
 \item
 The intersection $S_w\cap C_\alpha $ is a flag defined by $T_S$-eigenvectors.
 \end {enumerate}
 From the expression for $\hat{v}_k$ it is immediate that all of the possibilities in the statement
 occur.    Furthermore, since the $\lambda_k$ are non-zero, for every $k$ a non-zero contribution from $K_k$
 occurs in the sum
 $$
 \varepsilon_k=\sum_{j\le k} c_{kj}b(w(j))\,.
 $$
 Since $e_{k}$ and $e_{n+k}$ do not occur in $b(w(j))$ for $j<k$, it follows that $\varepsilon_k=K_k+E_k$
 in the standard basis (See $\S 2.2.2$).  Finally, since $\varepsilon_k$ is a $T_s$-eigenvector, it follows that $\varepsilon_k=K_k$ and
 is of the type in the the statement of the proposition.
 \end {proof}

\bigskip\noindent
If $w$ is a super dense permutation, then the set of all intersection points of $S_w$ is denoted by  $Supset_e(w)$ if n is even and by $Supset_o(w)$ if n is odd, where
 $$Supset_e(w):=\{\tilde{w}(F_S) \in W: \tilde{w} ~obtained~by~change~none,~some~or~all~of~the~numbers~-j~by~j\}$$
 $$Supset_o(w):=\{\tilde{w}(F_S) \in W: \tilde{w} ~obtained~by~change~none,~some~or~all~of~the~numbers~-j~by~j$$
$$~or~n~by~-n\},~~~~~~~~~~~~~~~~~~~~~~~~~~~~~~~~~~~~~~~~~~~~~~~~~~~~$$
 where $F_S$ is the maximally $b$-isotropic flag associated to the standard basis $e_1,e_2,...,e_{2n}$. Here $\tilde{w}(F_S)$ gives us a point in an open orbit $D_{\alpha}$ and to know this open orbit just replace $i$ by $+$ and $-i$ by $-$.


\chapter{Cycle intersection for $SO(p,q)$}
In this chapter we study the case of the real form $SO(p,q)$ of $SO_{m}(\mathbb{C})$. The general results here are stated in terms of algorithms (See definitions \ref{de1} and \ref{de2}); in fact it seems impossible to avoid this. In corollary \ref{cor12.4} we give concrete formulas for the intersection points in
$S_w\cap C_\alpha $ if the intersection non-empty and $S_w$ is of complementary dimension . Also, the number of intersection points with $C_0$ is explicitly computed in Theorem \ref{thm5.18}.

\section{Conditions for $S_w \cap C_{\alpha} \neq \emptyset$}
In the present section we describe the conditions for an element $w$ of the Weyl group
to parametrize Schubert variety with $S_w$ must satisfy in order that $S_w\cap D_\alpha \not=\emptyset $ for some flag domain $D_\alpha $. As would be expected, a special type of permutation plays a fundamental role.


\begin{defn}\label{de1}
An element $w \in W$ is called \textit{\textbf{a harmonic permutation}} if it satisfies the following conditions:\\
\textbf{If $q$ is even:} The number $-(2i-1), 1\leq i \leq \frac{q}{2}$, sits in any place to the left of the number $(2i)$ or $(-2i), 1\leq i \leq \frac{q}{2}$, in the one line
notation of the permutation and the order of the numbers $q+i$ or $-(q+i)$, where $1\leq i \leq p-q$ is arbitrary. \\
\textbf{If $q$ is odd:} The number $-(2i-1), 1\leq i \leq \frac{q-1}{2}$, sits in any place to the left of the number $(2i), 1\leq i \leq \frac{q-1}{2}$, and the number $-q$ sits
in the last position in the one line notation of the permutation, and the order of the numbers $q+i$ or $-(q+i)$, where $1\leq i \leq \frac{p-q}{2}$ is arbitrary.
\end{defn}

 \begin{ex}
In $SO(4,2)$ the relevant pairs are $(-12)$ and $(-1-2)$. As a result we have $6$ harmonic permutations. These are: $(-12-3),(-1-32),(-3-12),(-1-23),\\(-13-2),(3-1-2)$.
\end{ex}

Recall that the fixed point in the closed orbit $\gamma^{cl}$ is the flag associated to the following ordered basis
\begin{itemize}
	\item If $m$ is even, then the basis is   \begin{eqnarray}\label{basis 1}
 e_1+e_{2q},...,e_q+e_{q+1},e_{2q+1}+ie_{2q+2},e_{2q+3}+ie_{2q+4},...,e_{2n-1}+ie_{2n},&&  \nonumber \\
 e_{2n-1}-ie_{2n},...,e_{2q+3}-ie_{2q+4},e_{2q+1}-ie_{2q+2},e_q-e_{q+1},...,e_1-e_{2q}.& &
\end{eqnarray}
\item If $m$ is odd, then the basis is   \begin{eqnarray}\label{basis 2}
e_1+e_{2q},e_2+e_{2q-1},...,e_q+e_{q+1},e_{2q+1},e_{2q+2}+ie_{2q+3},e_{2q+4}+ie_{2q+5},...,e_{2n}+ie_{2n+1},&& \nonumber \\
e_{2q+1},e_{2n}-ie_{2n+1},...,e_{2q+4}-ie_{2q+5},e_{2q+2}-ie_{2q+3},e_q-e_{q+1},...,e_2-e_{2q-1},e_1-e_{2q}~~~&&.
\end{eqnarray}
\end{itemize}
\begin{rem}\label{rem5.4}
For $v_i$ any such basis vector and $b\in B_I$ the form of $b.v_i$ is given as follows:
\begin{enumerate}
	\item $b.(e_i-e_{2q-i+1})=\lambda(e_i-e_{2q-i+1})+\ldots  +b_n(e_{q}-e_{q+1})+a_n(e_q+e_{q+1})+\ldots +a_1(e_{1}+e_{2q})$,
	\item $b.(e_i+e_{2q-i+1})=\lambda(e_i+e_{2q-i+1})+a_{i-1}(e_{i-1}-e_{2q-i})+\ldots +a_1(e_{1}-e_{2q})$,
	\item $b.(e_{2q+2j-1}-ie_{2q+2j})=\lambda(e_{2q+2j-1}-ie_{2q+2j})+\ldots  +b_m(e_{m-1}-e_{m})\\
	~~~~~~~~~~~~~~~~~~~~~~~~~~~~~~+a_m(e_{m-1}+e_{m})+\ldots +a_1(e_{1}+e_{2q})$ \\
	
	where $m=2n$ or $2n+1$.
	\item $b.(e_{2q+2j-1}+ie_{2q+2j})=\lambda(e_{2q+2j-1}+ie_{2q+2j})+a_{i-1}(e_{2q+2j+1}+ie_{2q+2j+2})+\ldots +a_1(e_{1}+e_{2q})$,
	\item $b.(e_{2q+1})=\lambda(e_{2q+1})+a_{n}(e_{2n}+ie_{2n+1})+\ldots +a_1(e_{1}+e_{2q})$, if $m=2n+1$,
\end{enumerate}
 with $\lambda\not=0$ in all cases above. Note that in the above orbits, if $b$ is chosen appropriately, then we can arrange all linear combination for every one of the above vectors to be in the standard basis of $T_S$-eigenvectors. See Theorem \ref{thmSOPQ}.\qed
\end{rem}

\bigskip\noindent
In the following result $\psi :W_I\to W_S$ denotes the bijective map between Weyl groups which was introduced in Chapter 2.
	

	\begin{prop}\label{lemSO}
If $w$ is a  a harmonic permutation, then the flag $F_{\psi(w)}$ belongs to the orbit $B_I(F_w)$.
\end{prop}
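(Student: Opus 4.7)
The plan is to parallel the arguments used in Propositions 4.1.3 and 5.1.3 for the real forms $Sp(2n,\mathbb{R})$ and $SO^*(2n)$, adapting the bookkeeping to the two families of basis vectors listed in Remark \ref{rem5.4}. Given a harmonic permutation $w\in W_I$, write out the first $n$ vectors of the basis defining $F_w$ and let $\tilde w=\psi(w)\in W_S$. The goal is to exhibit an element $b\in B_I$ so that, position by position,
\[
b.v_{w_j}\;=\;\mu_j\,e_{\tilde w_j}, \qquad 1\le j\le n,
\]
for some nonzero scalar $\mu_j$. Once this is achieved on the first $n$ positions, the maximal $b$-isotropic extension to a full flag is unique and yields exactly the flag $F_{\tilde w}=F_{\psi(w)}$.

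I would handle the construction case by case according to the parity of $q$ (and of $m$), following the four subcases introduced in $\S 2.4.2$. Processing the indices $j=1,2,\ldots,n$ in order, one uses the corresponding formula in Remark \ref{rem5.4} for $b.v_{w_j}$: it is a linear combination with one mandatory nonzero coefficient $\lambda$ and a collection of free parameters $a_\ell,b_\ell$ that can be independently prescribed by a suitable upper--triangular choice of $b$. If $w_j$ belongs to the ``middle block'' $q<|w_j|\le n$, the vector $v_{w_j}$ already comes from the complex pairs $e_{2q+2\ell-1}\pm ie_{2q+2\ell}$ (or, for $m$ odd, the isolated $e_{2q+1}$), and all parameters other than $\lambda$ may be set to zero to collapse $b.v_{w_j}$ onto a single $T_S$-eigenvector matching $e_{\tilde w_j}$. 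If $w_j=\pm(2i-1)$ or $\pm 2i$ lies in the hyperbolic block $|w_j|\le q$, then the harmonic condition guarantees that the partner in the pair (namely $2i$ or $-2i$ after $-(2i-1)$, and for $q$ odd the index $-q$ anchored in the last position) has not yet appeared in the first $j-1$ positions; this is precisely what allows one to choose the free $a_\ell$'s to cancel all unwanted terms in the expansion, leaving a scalar multiple of $e_{\tilde w_j}$ exactly as $\psi$ prescribes (with the $\pm$ sign encoded by whether the $e_i+e_{2q-i+1}$ or $e_i-e_{2q-i+1}$ representative survives).

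The main technical obstacle is to check that across the recursion these choices of coefficients are mutually consistent inside a single element $b\in B_I$, i.e.\ that fixing the $a_\ell$'s at step $j$ does not conflict with the cancellations already imposed at steps $j'<j$. This is where the harmonic ordering is crucial: the condition that $-(2i-1)$ precedes $\pm 2i$, and that $-q$ sits in the last position in the odd-$q$ case, ensures that each new cancellation only uses parameters attached to basis vectors that have not been constrained in previous steps, so the choices can be made independently. For the odd-$q$ case one must additionally verify that the fixed central vector $e_q$ (or $e_{q+1}$) slides into the correct position; this is a direct check using item (5) of Remark \ref{rem5.4}.

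After the first $n$ vectors have been arranged as $\mu_j e_{\tilde w_j}$, the remaining $n$ vectors of $b(F_w)$ span the $b$-dual subspace, which by the isotropy conditions defining $Z$ must coincide with the span of the remaining standard basis vectors $e_{\tilde w_{n+1}},\ldots,e_{\tilde w_m}$ in the order dictated by $\tilde w$. Hence $b(F_w)=F_{\psi(w)}$, completing the proof.
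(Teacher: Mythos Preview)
Your sketch has a genuine gap at the positions where $w_j = 2i$ (with $1\le i\le \lfloor q/2\rfloor$) in the hyperbolic block. You aim to produce $b\in B_I$ with $b.v_{w_j}=\mu_j\,\varepsilon_{\tilde w_j}$ vector by vector, but this is impossible at such positions. When $w_j=2i$ the basis vector is $v_{w_j}=e_{2i}+e_{2q-2i+1}$, and by item~(2) of Remark~\ref{rem5.4} every element of the orbit $B_I.v_{w_j}$ lies in the span of the $h$-isotropic vectors $e_\ell+e_{2q-\ell+1}$, $1\le\ell\le 2i$. Since the leading coefficient $\lambda\neq 0$, the $e_{2i}$ and $e_{2q-2i+1}$ components carry equal nonzero coefficients and can never be separated; hence $b.v_{w_j}$ is never a $T_S$-eigenvector of the required form $e_r\pm ie_s$, whatever free parameters you choose. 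This is exactly where the parallel with Propositions~4.1.3 and~5.1.3 breaks down: in the $Sp(2n,\mathbb{R})$ and $SO^*(2n)$ cases every $w_j$ in the relevant block is negative, so every orbit is of the ``large'' type (item~(1)) and purely internal cancellation suffices; here the positive entry $2i$ forces the small orbit.

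You have also read the role of the harmonic condition backwards. It is not that at step $j$ the partner ``has not yet appeared'' so that there is room for internal cancellation; rather, when $w_j=\pm 2i$ the partner $-(2i-1)$ \emph{has already} been processed at some earlier step $j'<j$, producing the eigenvector $\varepsilon_{j'}=e_{2i-1}+ie_{2i}$. What the paper does (its Case~3) is add a suitable multiple of this previously constructed $\varepsilon_{j'}$ to $b.v_{w_j}$ to obtain $e_{2q-2i+1}\pm ie_{2q-2i+2}$. In other words, the correct target is equality of partial flags,
\[
\langle b.v_{w_1},\ldots,b.v_{w_j}\rangle \;=\; \langle \varepsilon_{\tilde w_1},\ldots,\varepsilon_{\tilde w_j}\rangle,
\]
not equality of individual vectors. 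This external combination is available precisely because $w$ is harmonic, and it is the same mechanism whose \emph{failure} is exploited in the converse direction in Theorem~\ref{thmSOPQ}; your sketch never invokes it.
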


\begin{proof}
 We handle the case where $m=2n$. The proof for $m=2n+1$ goes analogously. Let us first prove the theorem for the case that $q$ is even.
 For this let $w \in W_I$ be a harmonic permutation and define $F_w=w.(F_I)$ to be the isotropic full flag associated to $w$. Denote by
$$\{0\} \subset <u_{w_1}> \subset <u_{w_1},u_{w_2}> \subset ...... \subset <u_{w_1},...,u_{w_n}>$$
the first $n$ subspaces of $F_w=w(F_I)$, where $u_{w_i}$ is a vector from the basis above.
Let $\tilde{w} \in W_{S} $ be the image of $w$ under the bijective map $\psi$ and let $Y_{\tilde{w}}$ be the isotropic flag associated to $\tilde{w}$ such that the first half of $Y_{\tilde{w}}$ is
$$\{0\} \subset <\varepsilon_{\tilde{w}_1}> \subset <\varepsilon_{\tilde{w}_1},\varepsilon_{\tilde{w}_2}> \subset ...... \subset <\varepsilon_{\tilde{w}_1},...,\varepsilon_{\tilde{w}_n}>$$
Our claim here is that this flag is an intersection point in $B.F_w\cap C_\alpha $.
To prove this we will construct $b$ with $b(F_w)=Y_{\tilde w}$. From the definition of harmonic permutation, there are two possibilities for $w_1$: $|w_1|=2i-1\leq q$ or $|w_1|>q$.\\
 \textbf{Case 1:} ~If $|w_1|>q $. Consequently then $u_{w_1}=v_{\tilde{w}_1}$, so the orbit $B_I.<u_{w_1}>$ contains the point $<\varepsilon_{\tilde{w}_1}>$.\\
 \textbf{Case 2:} ~If $|w_1|=2i-1\leq q $, then we must consider the orbit $B_I.<e_{2\tilde{w}_1-1}-e_{2q+w_1+1}>$.
 By Remark (\ref{rem5.4}) above, the orbit $B_I.<e_{2\tilde{w}_1-1}-e_{2q+w_1+1}>$ contains points of the form $$y=<\alpha_1 (e_{2\tilde{w}_1-1}+e_{2q+w_1+1})+ \alpha_2 (e_{2\tilde{w}_1}+e_{2q+w_1})+\alpha_3 (e_{2\tilde{w}_1}-e_{2q+w_1})+\alpha_4 (e_{2\tilde{w}_1-1}-e_{2q+w_1+1})>$$ where $\alpha_1=\pm \alpha_4$ and $\alpha_2=\pm \alpha_3$.
By taking $\alpha_1=\alpha_4=\frac{1}{2}$ and $\alpha_2=\alpha_3=\frac{1}{2}i$, it follows that $y=<e_{2\tilde{w}_1-1}+ie_{2\tilde{w}_1}>=<v_{\tilde{w}_1}>$.\\

To construct the $j$-vector of $b(v_{w_j})$ to obtain the subspace $V_{\tilde{w}_j}$ we must consider three cases:\\
\textbf{Case 1:} If $|w_j|>q$, then $w_j=\tilde{w}_j$ and $\varepsilon_{\tilde{w}_j}=u_{w_j}$, so the orbit $B_I.<u_{w_j}>$ contains the point $\varepsilon_{\tilde{w}_j}$.\\
\textbf{Case 2:} If $|w_j|=2i-1\leq q$, then our job goes through the orbit $B_I.<e_{2\tilde{w}_j-1}-e_{2q+w_j+1}>$.
 By using Remark (\ref{rem5.4}) we see that the orbit $B_I.<e_{2\tilde{w}_j-1}-e_{2q+w_j+1}>$ contains points of the form $$y=<\alpha_1 (e_{2\tilde{w}_j-1}+e_{2q+w_j+1})+\alpha_2 (e_{2\tilde{w}_j}+e_{2q+w_j})$$ $$+\alpha_3 (e_{2\tilde{w}_j}-e_{2q+w_j})+\alpha_4 (e_{2\tilde{w}_j-1}-e_{2q+w_j+1})>$$ where $\alpha_1=\pm \alpha_4$ and $\alpha_2=\pm \alpha_3$.
  By taking $\alpha_1=\alpha_4=\frac{1}{2}$ and $\alpha_2=\alpha_3=\frac{1}{2}i$, it follows that $y=<e_{2\tilde{w}_j-1}+ie_{2\tilde{w}_j}>=<\varepsilon_{\tilde{w}_j}>$. Therefore $b$ is constructed to obtain the flag
  $$\{0\} \subset <\varepsilon_{\tilde{w}_1}> \subset <\varepsilon_{\tilde{w}_1},\varepsilon_{\tilde{w}_2}> \subset ...... \subset <\varepsilon_{\tilde{w}_1},...,\varepsilon_{\tilde{w}_j}>.$$ \\
  \textbf{Case 3:} If $|w_j|=2i$ , then the orbit is $B_I.<e_{|w_j|}+e_{|2 \tilde{w}_j|-1}>$ is relevant. In this case the points
$$y=<\alpha_1 (e_{|w_j|-1}+e_{|2 \tilde{w}_j|}) +\alpha_2 (e_{|w_j|}+e_{|2 \tilde{w}_j|-1})+ \alpha_3 (e_{|w_j|-1}+ie_{|w_j|})>$$
belong to the orbit $B_I.<e_{|w_j|}+e_{|2 \tilde{w}_j|-1}>$. For $\alpha_1=-i, \alpha_2=1$ and $\alpha_3=i$ we have $y=<e_{|2 \tilde{w}_j|-1}+ie_{|2 \tilde{w}_j|}>$.
Therefore, the $j$-vector of $b$ is constructed in this case as well..\\
Thus by induction we observe that $b\in B_I$ can be constructed with $b(F_w)=Y_{\tilde{w}}$.\\

We complete the proof by handling the case where $q$ is odd.
For this point we can repeat the steps of the proof above for $w_j>q$ and $|w_j|=2i-1$. For the case where $|w_j|=2i$, we apply the same method as above, only changing
$2|\tilde{w}_j|$ by $2(|\hat{w}_j|+1)$. If $w_n=-q$, then $\hat{w}_j=-n$. In this case the orbit of relevance is $B_I.<e_q-e_{q+1}>$. As we see from Remark \ref{rem5.4} $y=\alpha_1 (e_q+e_{q+1})+ \alpha_2 (e_q-e_{q+1})$ belongs to this orbit . The desired result is then obtained by taking $\alpha_1=1,~ \alpha_2=-1$ the point
$y=e_{q+1}$ belongs to the orbit $B_I.<e_q-e_{q+1}>$. Therefore the element of $B_I$ is also constructed in the case where $q$ is odd.\\
\end{proof}

\begin{thm}\label{thmSOPQ}\textbf{(Harmonic Permutation Theorem)}.
The following are equivalent
\begin{description}
	\item[~~~~~~~(i)]  $w$ is harmonic.
  \item[~~~~~~~(ii)]  $B_I(F_w)\cap D_\alpha \not=\emptyset$ for some $\alpha $.
		\end{description}
Under either of these conditions for every $\alpha$ the nonempty intersection $B_I(F_w)\cap C_\alpha$
 contains a $T_S$-fixed point.
\end{thm}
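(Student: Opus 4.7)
The plan is to follow the template established by Theorems~\ref{thmSP} and \ref{thmSO} for the symplectic and $SO^*$ cases, proving the equivalence by combining an explicit construction (Proposition~\ref{lemSO}) for the direction (i)$\Rightarrow$(ii) with a non-existence argument for the contrapositive of (ii)$\Rightarrow$(i). The extra conclusion about the existence of a $T_S$-fixed point in $B_I(F_w)\cap C_\alpha$ will come for free from the construction in the first direction.

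For (i)$\Rightarrow$(ii), I would invoke Proposition~\ref{lemSO} directly. If $w$ is harmonic, it produces $b\in B_I$ with $b(F_w)=F_{\psi(w)}$, and $F_{\psi(w)}$ is assembled from standard basis vectors $e_{\tilde w_j}$ and is therefore a $T_S$-fixed point. The signature $\alpha$ read off from the signs of $\tilde w_j$ then places $F_{\psi(w)}$ in the base cycle $C_\alpha \subset D_\alpha$, which simultaneously gives (ii) and the stronger fixed-point conclusion.

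For (ii)$\Rightarrow$(i), I would argue by contrapositive. Suppose $w$ is not harmonic. By Theorem~\ref{corcor} and Corollary~\ref{fixthm} (together with the Moving Lemma~\ref{Move}), it suffices to show that $S_w$ contains no $T_S$-fixed point in any $C_\alpha$; equivalently, that no $b\in B_I$ can convert the basis $(b.v_{w_1},\ldots ,b.v_{w_m})$ of $b(F_w)$ into a split basis whose entries lie in $E^+\cup E^-$. Using the formulas for $b.v_{w_j}$ summarized in Remark~\ref{rem5.4}, one locates the leftmost position $j$ at which the harmonic condition is violated: either some $2i-1$ appears with a positive sign, or some $\pm 2i$ appears before its mate $-(2i-1)$, or, in the case $q$ odd, $-q$ fails to occupy the last of the first $n$ positions. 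In each case, any attempted split combination $\varepsilon_j=\lambda v_{w_j}+B_{w_j}+\sum_{i<j}d_i\varepsilon_i$ either leaves an uncancelled $e_r$-term whose dual $e_{2q-r+1}$ already appears in an earlier $\varepsilon_\kappa$ (producing two split vectors $\xi_\kappa,\xi_j$ with $b(\xi_\kappa,\xi_j)\ne 0$ and violating maximal $b$-isotropy), or leaves a residue that forces $\varepsilon_j \notin E^+\cup E^-$. This is the same contradiction mechanism used in the proofs of Theorems~\ref{thmSP} and \ref{thmSO}.

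The main obstacle, which is specific to $SO(p,q)$, is the case analysis. Unlike $Sp(2n,\mathbb{R})$ and $SO^*(2n)$, here the $q$-block of basis~(\ref{basis 1}) or~(\ref{basis 2}) pairs four vectors $\{v_{2i-1},v_{2i},v_{m-2i+1},v_{m-2i+2}\}$ rather than two, because a split basis vector in $E^-$ has the form $e_{2i-1}\pm i e_{2i}$ and thus requires coordinated cancellation across two pairs. This is why the harmonic condition couples $-(2i-1)$ with $\pm 2i$, and why the non-existence argument must track how the $\pm i$ factors propagate through the allowable combinations of $(e_r\pm e_{2q-r+1})$ terms. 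The four sub-cases ($m$ even or odd, $q$ even or odd) must be treated separately but in parallel fashion, with the odd cases needing extra care for the isotropic middle vectors $e_q,e_{q+1}$ and, when $m=2n+1$, the singleton $e_{2q+1}$. By contrast, the entries with $|w_j|>q$ impose no ordering constraint, since the corresponding orbits $B_I.(e_{m-2r+1}\pm ie_{m-2r+2})$ and $B_I.e_{2q+1}$ already land in $T_S$-eigendirections without needing a partner vector; this explains why the harmonic condition leaves their order arbitrary.
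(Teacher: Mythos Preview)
Your overall architecture matches the paper's: invoke Proposition~\ref{lemSO} for (i)$\Rightarrow$(ii), and for (ii)$\Rightarrow$(i) argue the contrapositive by reducing (via Theorem~\ref{corcor}) to the non-existence of a $T_S$-fixed flag in $S_w\cap C_\alpha$, then locate the first position $j$ where harmonicity fails and show the partial split basis cannot be extended. That is exactly the paper's route.

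The one place where your proposal diverges from the paper is the contradiction mechanism at position $j$. You lead with the $b$-isotropy argument lifted from Theorems~\ref{thmSP} and~\ref{thmSO} (two earlier split vectors pairing nontrivially under $b$), and offer ``leaves a residue forcing $\varepsilon_j\notin E^+\cup E^-$'' only as a fallback. In the $SO(p,q)$ setting the $b$-pairing argument does not go through cleanly: when $w_j=2i$ with $2i\le q$, the orbit $B_I.v_{w_j}$ consists of combinations $\sum_r \alpha_r(e_r+e_{2q-r+1})$, and the obstruction is not that some earlier $\varepsilon_\kappa$ would pair against $\varepsilon_j$, but rather that the leading term $\lambda(e_{2i}+e_{2q-2i+1})$ mixes $E^-$ and $E^+$ and \emph{neither} standard vector $e_{2i}$ nor $e_{2q-2i+1}$ can be supplied by any $\varepsilon_k$ with $k<j$ (since those vectors occur only in the basis elements at Weyl positions $\pm 2i$). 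The paper phrases this as an $h$-isotropy obstruction: every vector in $B_I.v_{w_j}$ is $h$-isotropic, while a split $T_S$-eigenvector such as $e_{2i-1}\pm ie_{2i}$ is not, so an external contribution from a specific earlier $\varepsilon_k$ (namely one containing $e_{2i-1}\pm ie_{2i}$) is mandatory---and precisely that vector is unavailable because $-(2i-1)$ has not yet occurred. Your ``residue'' alternative is this argument; promote it to the primary mechanism and drop the $b$-pairing version, which is the wrong import from the symplectic case. With that adjustment your case analysis (including the odd-$q$ handling of $e_q,e_{q+1}$ and the $m=2n+1$ singleton) lines up with the paper.
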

\begin{proof}
\begin{description}
	\item[(i)$\Rightarrow$ (ii)] Is exactly the statement of Proposition \ref{lemSO}.
	\item[(ii)$\Rightarrow$ (i)] Assuming that $w$ is not harmonic permutation, we will show that $S_w \cap D_{\alpha} = \emptyset,$ for all $\alpha$, i.e. $S_w$ has no $T_S$-fixed points\footnote{See Corollary \ref{fixthm}}. Assume to the contrary that there exists $b \in B_I$ such that $b.(F_w) \in S_w \cap C_{\alpha}$. For $b(F_w)$ to be fixed, $b$ has to have a certain shape and at each stage where the condition of harmonicity is violated, we should prove that there is no such $b$. For this recall that the complex bilinear form $b$ has been defined to satisfy the following orthogonality condition:
$$b(e_j-e_{2q-j+1},e_j+e_{2q-j+1})=1,~1\leq j \leq q \ \text{and} \ b(e_j \pm e_{2q-j+1},e_k \pm e_{2q-k+1})=0\  \text{for} \ k\not=j\,.$$ Since $w$ is not a harmonic permutation, there exist a pair of the form $(-(2i-1),2i)$ or of the form
$(-(2i-1),-2i)$, where $1\leq i \leq \frac{q}{2}$, such that $\mp 2i$ sits to the left of $-(2i-1)$. Assume that $w_j=2i$ is the first even number which appears in $w$ such that
$2i$ sits to the left of $-(2i-1)$. Then $b$ does in fact yield a $T_S$-fixed partial flag,i.e.
\begin{equation}\label{1}
 \{0\} \subset <\varepsilon_{\tilde{w}_1}> \subset <\varepsilon_{\tilde{w}_1},\varepsilon_{\tilde{w}_2}> \subset ...... \subset <\varepsilon_{\tilde{w}_1},...,v_{\tilde{w}_{j-1}}>
\end{equation}
Now we check that there is no $b\in B_I$ so that $b.(v_{w_j})$ defines an extended partial flag which is $T_S$-fixed. For this we consider  the orbit $B_I.<e_{|w_j|}+e_{|2 \tilde{w}_j|-1}>$ which contains points of the form
\begin{equation}\label{2}
\alpha_1 (e_1+e_{2q})+\alpha_2 (e_2+e_{2q-1}) + ......+\alpha_j (e_{|w_j|}+e_{|2 \tilde{w}_j|-1})
\end{equation}
This is a linear combination of $h$-isotropic vectors for all $\alpha_i$. Recall that the flag of intersection has non $h$-isotropic vectors and to have one of these vectors in this step which is linearly independent with all vectors in the flag (\ref{1}) above , we should add the point $(e_{|w_j|-1}+ie_{|w_j|})>$ from the flag (\ref{1}) to the linear combination in (\ref{2}), but this vector is not in the flag (\ref{1}). Thus, as was claimed, no $b\in B_I$ has that property that $b(F_w)$ is $T_S$-fixed. \\
If $w_j=-2i$ is the first even number such that $-2i$ sits to the left of $-(2i-1)$, then the relevant orbit is $B_I.<e_{|w_j|}-e_{|2 \tilde{w}_j|-1}>$ which contains points of
the form  $$\alpha _1 (e_1+e_{2q})+...+\alpha_q (e_q+e_{q+1})+\alpha_{q+1} (e_q-e_{q+1})+...+\alpha_s (e_{|w_j|}-e_{|2 \tilde{w}_j|-1}).$$ To have non-isotropic point which is linearly independent with all points in the flag (\ref{1}) , we should add any of the points
$(e_{|w_j|-1}+ie_{|w_j|})>$, $(e_{|w_j|-1}-ie_{|w_j|})>$, $(e_{2q-|w_j|+1}+ie_{2q-|w_j|+2})>$ or $(e_{2q-|w_j|+1}-ie_{2q-|w_j|+2})>$
from the flag (\ref{1}) to the linear combination in (\ref{2}). But the flag (\ref{1}) does not contain any of these points.So again, for all $b\in B_I$ the flag $b(F_w)$ is not $T_S$-fixed and consequently $S_\alpha \cap D_\alpha =\emptyset $.
\end{description}
 \end{proof}

\section{Introduction to the combinatorics}

For the remainder of this chapter we only discuss the intersection properties of the Iwasawa-Schubert cells which are of complementary dimension to $C_0$. Recall that the maximal compact subgroup of $SO(p,q)$ is $K_0 := S(O(p) \times O(q))\subset S(U(p) \times U(q))$. For $E^+$ and $E^-$ as in $\S 2.4.2$, the base cycle in the flag domain $D_{a,b}$ is given by
$$C_0 =\{F \in Z : \text{dim} V_i \cap E^-=\sum_{j=1}^i a_j~ and~ \text{dim} V_i \cap E^+ =\sum_{j=1}^i b_j , 1 \leq i \leq m\}.$$
If $m=2n$, we have two cases: If q is even, the dimension of $C_0$ is $\frac{p(p-2)}{4}+\frac{q(q-2)}{4}$. If q is odd, then $C_0$ has the dimension $\frac{(p-1)^2}{4}+\frac{(q-1)^2}{4}.$ If $m=2n+1$, then the dimension of the base cycle is $\frac{(p-1)^2}{4}+\frac{q(q-2)}{4}$ if q is even and is $\frac{p(p-2)}{4}+\frac{(q-1)^2}{4}$ if q is odd.
  Since we have restricted to the case where $Z=G/B$ is the manifold of complete flags, it follows that $\text{dim}~S_w=\frac{pq}{2}$ if $q$ and $n$ are even, and $\text{dim}~S_w=\frac{pq-1}{2}$ if $q$ or $n$ are odd.
\begin{defn}\label{de2}
A harmonic permutation $w \in S_n \ltimes \mathbb{Z}_2^{n-1}$ is called \textit{\textbf{a perfect harmonic permutation}} if it is constructed by the following algorithm:
\begin{description}
	\item[A. ] Start with a sequence of $n$ empty boxes which are to be filled in order to construct $w$.
  \item[B. ] Consider the pairs $(-(2j-1),2j)$ and $(-(2j-1),-2j)$ for all $1\leq j \leq \frac{q}{2}$.\\

\item[C. ] If $q$ is even:
\begin{enumerate}
\item The pairs in step B are $(-1,2),(-1,-2),(-3,4),...,(-(q-1),q),(-(q-1),-q)$.
 \item Step by step, starting from $(-1,2)$ until $(-(q-1),-q)$. For each $1\leq j \leq \frac{q}{2}$ we have two pairs of the forms $(-(2j-1),2j)$ and $(-(2j-1),-2j)$, so choose only one pair of them for each step and omit the other form the above list.
 \item If we choose the pair of the form $(-(2j-1),2j)$, place this pair in any box in $w$ such that the components $-(2j-1)$ and $2j$ of this pair sits as close as possible to each other.
 \item If we choose the pair of the form $(-(2j-1),-2j)$, place this pair in any box in $w$ such that the components $-(2j-1)$ and $-2j$ of this pair sits as close as possible to each other
 and all pairs $(-(2i-1),2i)$ or $(-(2i-1),-2i)$ with $i>j$ are sitting to the left of this pair and the pairs of the form $(-(2i-1),-2i)$ with $i<j$ are sitting in a decreasing order with respect to $i$ to the right of $(-(2j-1),-2j)$.
\item  Once a pair is placed its position can be ignored so that the places at the immediate left and right of this pair become adjacent.
 \item After all pairs are placed, the remaining numbers $\pm (q+i)$ for $1\leq i \leq \frac{p-q}{2}$ are placed in the remaining spots in the strictly increasing order	with respect to $|w(i)|$ such that all number $\pm(q+i)$ for $1\leq i \leq \frac{p-q}{2}$ are  sitting to the left of the pairs $(-(2i-1),-2i)~~1\leq i \leq \frac{q}{2}$ .
	If the number of negative signs in all pairs from steps 2 and 3 is even, then all numbers of the reminder numbers are positive, and if the number of negative signs is odd, then all numbers of the reminder numbers are positive except the number $n$ is negative.\\
\end{enumerate}
\item[D. ] If $q$ is odd:
 \begin{enumerate}
\item The pairs in step B are $(-1,2),(-3,4),(-5,6),...,(-(q-2),q-1),-q$.
\item In the last box of $w$ put the number $-q$.
 \item Step by step, starting from $(-1,2)$ until $(-(q-2),q-1)$, choose a pair and place this pair in any box of the first $n-1$ boxes in $w$ such that the components $-(2j-1)$ and $2j$ of this pair sit as close as possible to each other. This means that once a pair is placed it can be ignored so that the places at the immediate left and right of this pair become adjacent.
 \item After all pairs are placed, the numbers $\pm(q+i)$ for $1\leq i \leq \frac{p-q}{2}$ are placed in the remaining spots in the stricly increasing order with respect to $|w(i)|$  . If the number of negative signs in all pairs from step 1 and 2 is even, then all numbers of the reminder are positive, and if the
	number of negative signs is odd then all remaining numbers of the reminder are positive expect the number $n$ is negative.
\end{enumerate}
\end{description}
\end{defn}
\bigskip\noindent
\begin{rem}\label{rempo}
If $q$ is odd, then the signature of the flag domain $D_{\alpha}$ has $+$ in the last position.\qed
\end{rem}
\bigskip\noindent
\begin{rem}
If $W= S_n \ltimes \mathbb{Z}_2^{n}$, then a perfect harmonic permutation consists only of pairs of the form $(-(2j-1),2j)$ for all $1\leq j \leq \frac{q}{2}$.  It is constructed as above, in particular such that the sign of $n$ is $+$.\qed
\end{rem}
\bigskip\noindent

\begin{ex}
If $p=6,~q=4$, then the perfect harmonic permutations are:\\$(-12)(-34)5,(-12)5(-34),5(-12)(-34),(-3~4)(-1~2)5,(-3~4)5(-1~2),5(-3~4)(-1~2),\\(-12)-5(-3-4) ,-5(-12)(-3-4),-5(-3-4)(-12),(-34)-5(-1-2), -5(-34)(-1-2),\\5(-3-4)(-1-2), (-3-1~2~4)5,5(-3-1~2~4),-5(-3-12-4)$.
\end{ex}
\bigskip\noindent

\begin{ex}
If $p=10,~q=6$, then the element $78(-5~6)(-3-4)(-1~2)$ is a perfect harmonic permutation while the element $78(-3-4)(-1~2)(-5~6)$ is not a perfect harmonic permutation.
\end{ex}

\bigskip\noindent
Recall that the dimension of the cells corresponds to the length of the word $w$ in the Weyl group, i.e. if $F_w=w.F_I$, then $dim(B.F_w)=l(w)$. So if we want to discuss the dimension of the Schubert cell it is enough to discuss the length of Weyl elements. \\
\begin{prop}\label{lem5.13}
Every perfect harmonic permutation $w \in {\displaystyle S_{n}\ltimes \mathbb{Z}_2^{n-1}}$ has dimension $\frac{pq}{2}$ if $q$ is even and $\frac{pq-1}{2}$ if $q$ is odd.
\end{prop}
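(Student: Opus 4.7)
The plan is to apply the length formula $l(w)=L(\tilde w)+f(w)+m$ from the preceding lemma (which is valid for the Weyl group $W=S_n\ltimes\mathbb{Z}_2^{n-1}$) and to verify that for any perfect harmonic permutation the right-hand side equals $pq/2$ when $q$ is even, and $(pq-1)/2$ when $q$ is odd.

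First I would unpack the structural picture of a perfect harmonic permutation. For $q$ even the construction yields a concatenation $w=u\cdot v$, where $v$ lists the pairs $(-(2j-1),-2j)$ in decreasing $j$-order and $u$ is an interleaving of the remaining $(+)$-type pairs $(-(2j-1),2j)$ with the singletons $\pm(q+i)$; the sign of the singleton $n$ is forced to keep the total number of negatives even. For $q$ odd the entry $-q$ is fixed at the last slot, and the rest proceeds analogously. The main obstacle, and where most of the work goes, is an invariance lemma: $l(w)$ does not depend on the particular choices in the algorithm. I would establish this by handling two local moves, namely swapping adjacent blocks of $u$, and switching a single pair from $(+)$-type to $(-,-)$-type (with the compensating sign flip on $n$); for each move one tracks how $L(\tilde w)$, $f(w)$, and $m$ change, and verifies that the three changes cancel in the sum. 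The subtlety is that each of $L$, $f$, $m$ genuinely changes under these moves and only their sum is invariant, so the cancellation must be checked by careful bookkeeping of positions.

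Once invariance is in place I would compute on the canonical representative $w_0=(-1,2,-3,4,\ldots,-(q-1),q,q+1,\ldots,n)$, with $-n$ replacing $n$ when parity requires. The algorithm gives explicitly
$$\tilde w_0=(2,4,\ldots,q,q+1,\ldots,n,\;q-1,q-3,\ldots,3,1),$$
and a routine count of inversions within the increasing prefix, within the decreasing suffix, and across them yields $L(\tilde w_0)=qn/2-q^2/4$. The negative positions $\{1,3,\ldots,q-1\}$ (together with position $n$ in the parity-flipped case) determine $f(w_0)$ and $m$ by elementary summation. Substituting $n=(p+q)/2$ and simplifying gives $l(w_0)=q(2n-q)/2=pq/2$ when $q$ is even. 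The $q$ odd case is handled in parallel, the fixed entry $-q$ at position $n$ contributing in a controlled way to each of $L$, $f$, and $m$, and yielding $l(w_0)=(pq-1)/2$.
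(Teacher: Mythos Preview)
Your overall strategy---prove that all perfect harmonic permutations have the same length via local moves, then evaluate on one canonical representative---is sound and genuinely different from the paper's argument. The paper proceeds instead by induction on $n$: it strips off one or two pairs (e.g.\ $(-1\,2)$ and $(-3\,4)$, or $(-1\,-2)$), applies the inductive hypothesis in $S_{n-2}\ltimes\mathbb{Z}_2^{n-3}$ or $S_{n-4}\ltimes\mathbb{Z}_2^{n-5}$, and then counts exactly the extra reflections needed to reinsert those pairs (and flip their signs) in the given positions. Your invariance-plus-one-computation route is arguably cleaner in spirit, and your evaluation on $w_0$ is correct: one checks $L(\tilde w_0)=q(2n-q)/4$ as you claim, and the pieces assemble to $pq/2$.

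There is, however, a concrete error in your structural description that you would need to repair before the local-moves argument can go through. You assert that every perfect harmonic $w$ factors as $w=u\cdot v$ with all the $(-,-)$-type pairs gathered at the right end in $v$. This is false. In the paper's own list for $p=6$, $q=4$ the word
\[
-5\,(-3\,-4)\,(-1\,2)
\]
is a perfect harmonic permutation: here the $(+)$-type pair $(-1\,2)$ lies to the \emph{right} of the $(-,-)$-type pair $(-3\,-4)$. Condition~4 in Definition~\ref{de2} only forces pairs of index $i>j$ to the left of a $(-,-)$-pair with index $j$; a $(+)$-type pair of smaller index is unconstrained. So your block decomposition is too coarse, and the two local moves you propose (swap adjacent blocks of $u$; toggle one pair type with a compensating sign on $n$) do not generate the full set of perfect harmonic permutations. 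To salvage the invariance lemma you would need a richer set of elementary moves---for instance, also allowing a $(+)$-type pair to slide past an adjacent $(-,-)$-pair when the index condition permits---and then check length-invariance for each. That bookkeeping is doable but is precisely the work the paper avoids by its inductive reduction.
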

\begin{proof}
Given  a perfect harmonic permutation $w$ we consider three cases.These depend on which of the pairs $(-1~2), (-3~4), (-1 -2)$ or $(-3 -4)$  is contained in $w$. In each of these cases our proof goes by induction on the dimension of the flag manifold.
Without loss of generality let $p>q\geq 6$ because if $q \leq 6$ then the permutaion has only one or tow pairs and the proof become trivial.\\

\textbf{Case 1:} The permutation $w$ contains the pairs $(-1~2)$ and $(-3~4)$.\\
First, remove the pairs $(-1~2)$ and $(-3~4)$ from $w$ to have a new permutation
$v$ consisting of the numbers $\{5,6,...,n\}$. Define a function $f:\{5,...,n\}\longrightarrow \{1,...,n-4\}$ by $f(i)=i-4$ for all $ 1\leq i \leq n-4$. This is a bijective map which sends $v$ to
$\hat{w} \in {\displaystyle S_{n-4}\ltimes \mathbb{Z}_2^{n-5}} $. Note that $\hat{w}$ is a perfect harmonic permutation in $ {\displaystyle S_{n-4}\ltimes \mathbb{Z}_2^{n-5}} $. Thus by the induction assumption  $l(\hat{w})=\frac{(p-4)(q-4)}{2}$. Since $v=f^{-1}(\hat{w})$ it follows that v has the same length as $\hat{w}$. So we put the numbers $1234$ to the left of $v$ to have an element
$\tilde{w}  \in {\displaystyle S_{n}\ltimes \mathbb{Z}_2^{n-1}}$ with length $ \frac{(p-4)(q-4)}{2}$.
To split the sign of 1 and 3 (i.e., to change the postive sign to negative sign) we add $n-4+1$ to $ \frac{(p-4)(q-4)}{2}$ to send $3$ to the last position and add $n-4+2$ to send 1 to the position before the last position.
Consequently, we have the element $(24)v(13)$ with length $ \frac{(p-4)(q-4)}{2}+n-4+1+n-4+2=\frac{(p-4)(q-4)}{2}+2n-5$ and it follows that the element $(24)v(-3-1)$ has length $\frac{(p-4)(q-4)}{2}+2n-4$.
We then return to the original $w$ and remove only the pair $(-1~2)$. Then we define $g$ to be the number of positions to the left of $(-34)$.  As a result we have $n-4-g$ positions to
left of $(-3~4)$ and $-3$ should cross $n-4-g+1$ positions to end up in the last position and $4$ should cross $g$ boxes to end up in the first position.

 \bigskip\noindent
Finally we return to the original $w$ and define $h$ to be the number of positions to the left of $(-1~2)$ and $n-2-f$ to the right. In this situation $-1$ must cross $n-2-h+1$ positions to end up in
the last position and $2$ must cross $g$ positions to end up in the first position. It follows that the length of $w$ is  $\frac{(p-4)(q-4)}{2}+2n-4+n-4-g+1+g+n-2-h+1+h=\frac{pq}{2}$.\\
If $q$ is odd, then the length of $\hat{w} \in {\displaystyle S_{n-4}\ltimes \mathbb{Z}_2^{n-5}} $ is $l(\hat{w})=\frac{(p-4)(q-4)-1}{2}$. After applying the same steps as those above it follows that $l(w)=\frac{pq-1}{2}$\\

\textbf{Case 2:} The permutation $w$ contains the pair $(-1~-2)$.\\
Note that this case only occurs if q is even. Since $w$ is a perfect harmonic permutation, the pair $(-1 -2)$ sits in the last 2 positions of $w$. Remove the pair $(-1 -2)$ from $w$ to obtain $\hat v$.  Since $q$ is even, a similar argument to that above shows that  $l(\hat v)=\frac{(p-2)(q-2)}{2}$ because q is even . Put the pair $(1~2)$ to the left of $\hat v$.  It follows that $\tilde v=(1~2)\hat v$
has the same length as $\hat v$. To split the sign of 1 and 2, each of them must cross $n-2$ positions. Having made this move we then apply $2$ reflection to obtain  the pair $(-1-2)$ in the last
two positions. Hence the length of $w$ become $l(w)=\frac{(p-2)(q-2)}{2}+2(n-2)+2=\frac{pq}{2}$.\\

\textbf{Case 3:} The permutation $w$ contains the pairs $(-1~2)$ and $(-3~-4)$.\\
Note that this case appear only if q is even. In this case these two pairs appear in $w$ in the following forms: $(-3-4-1~2)$ or $(-3-1~2-4)$ or the pair $(-3-4)$ sits in the last two positions of $w$. If the pair $(-3-4)$ sits in the last two positions of $w$, then the argument  goes as in Case 2 above.\\
If we have the form $(-3-1~2-4)$, then we must add $n-4$ to $ \frac{(p-2)(q-2)}{2}$   to put 3 in the last position
and add $n-2$ to $ \frac{(p-2)(q-2)}{2}$ to put 4 in the last position. Then we must add 1 to split the signs
and 3 to sent $-3$ to its position in the original $w$, then the length of $w$ become $ \frac{(p-2)(q-2)}{2}+(n-4)+(n-2)+1+3=\frac{pq}{2}$ .\\
If we have the form $(-3-4-1~2)$, then we must add $n-4$ to $ \frac{(p-2)(q-2)}{2}$ to put 3 in the last position
and add $n-1$ to $ \frac{(p-2)(q-2)}{2}$ to put 4 in the position before the last position. Then we must add 1 to split the signs,  3 to send $-3$ to its position in the original $w$ and 1 to send $-4$ to its position in the origenal $w$. It follows that the length of $w$ is  $ \frac{(p-2)(q-2)}{2}+(n-4)+(n-3)+1+3+1=\frac{pq}{2}$.\\
\end{proof}
\begin{prop}
Every perfect harmonic permutation $w \in {\displaystyle S_{n}\ltimes \mathbb{Z}_2^{n}}$ has dimension $\frac{pq}{2}$.
\end{prop}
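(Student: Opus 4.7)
The plan is to specialize the case analysis of Proposition \ref{lem5.13}, now working with the length formula for $W = S_n \ltimes \mathbb{Z}_2^n$, in which no parity constraint is imposed on sign changes. By the remark preceding this statement, every perfect harmonic permutation $w$ in this $W$ is assembled from pairs of the form $(-(2j-1), 2j)$ for $1 \le j \le q/2$, together with the singletons $\{q+1, \ldots, n\}$ appearing in strictly increasing order and all positive. In particular $q$ is necessarily even here, and only an analogue of Case 1 of the previous proof is required.

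I would proceed by induction on $q$. The base case $q = 0$ gives the identity word $(1, 2, \ldots, n)$, which has length $0 = pq/2$. For the inductive step, remove the pair $(-1, 2)$ from $w$ to obtain a word $v$ on $\{3, \ldots, n\}$, and relabel via $\phi(i) = i - 2$ to produce a perfect harmonic permutation $\hat{w} \in S_{n-2} \ltimes \mathbb{Z}_2^{n-2}$ associated to $SO(p-2, q-2)$. By the inductive hypothesis, $l(\hat{w}) = (p-2)(q-2)/2$.

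Next I would measure the length cost of reinserting the pair $(-1, 2)$. Prepending $(1, 2)$ to $\hat{w}$ introduces no inversions, since the entries $1$ and $2$ are smaller than every other entry and appear in increasing order; thus $l((1, 2)\hat{w}) = l(\hat{w})$. Sliding the pair rightward to its actual position $(r, r+1)$ in $w$ introduces $2(r-1)$ inversions, because each of the two entries passes over $r-1$ strictly larger entries without crossing the other member of the pair. Finally, flipping the sign of the entry $1$ at position $r$ contributes $2(n-r) + 1$ to the length, by moving $1$ to the rightmost slot (cost $n-r$), flipping (cost $1$), and returning it to position $r$ (cost $n-r$). Summing,
\begin{equation*}
l(w) = \frac{(p-2)(q-2)}{2} + 2(r-1) + 2(n-r) + 1 = \frac{(p-2)(q-2)}{2} + 2n - 1.
\end{equation*}
The identity $(p-2)(q-2)/2 + 2n - 1 = pq/2$ reduces to $2n = p + q - 1$, which holds because in the odd-dimensional setting $m = 2n + 1 = p + q$.

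The main obstacle is to verify cleanly that the contributions of sliding and sign-flipping decouple as claimed. This works in the present setting because (i) the pair $(-1, 2)$ remains adjacent throughout, so its two components move in lock-step without creating internal inversions; (ii) the sign-flip cost $2(n-r) + 1$ is independent of the entries to the left of position $r$, as it concerns only the rightward trajectory of the entry $1$; and (iii) no alternative pair shape arises in this Weyl group, in contrast with Cases 2 and 3 of Proposition \ref{lem5.13}, so the single reduction suffices.
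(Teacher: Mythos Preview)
Your proof is correct and follows essentially the same approach as the paper, which simply notes that the argument of Proposition~\ref{lem5.13} applies with only Case~1 relevant; your single-pair induction (removing $(-1,2)$ and reducing to $SO(p-2,q-2)$) is the natural adaptation to the $B_n$ Weyl group, where individual sign flips are available. One minor point: the phrase ``passes over $r-1$ strictly larger entries'' is imprecise when some of those entries are negative (e.g.\ $-3$ is not larger than $2$), but the conclusion $2(r-1)$ is still right---passing a positive entry increases $L(\tilde w)$ by~$2$, while passing a negative entry increases $f(w)$ by~$2$, so either way the length rises by~$2$.
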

\begin{proof}
The argument goes exactly along the lines as that for the above Proposition.  Here it is in fact simpler, because only the pairs $(-1~2)$ and $(-3~4)$ appear.
\end{proof}
\section{Intersection points of Schubert duality}
 let $w\in W_I$ be a perfect harmonic permutation, in particular so that $S_w$ is of complementary dimension to the cycles. Recall that in this case either $S_w\cap C_\alpha =\emptyset $ or is pointwise $T_S$-fixed (see Theorem \ref{corcor} and corollary \ref{fixthm}). The main goal in this section is to compute all
 such intersection points. The argument in the case of $SO(p,q)$ is carried out by means of algorithms.  Nevertheless we are able to provide formulas for the cardinality of $S_w\cap C_\alpha $ when it is non-zero and the total number of cycles $C_\alpha $ for which this intersection is non-empty (see Theorem \ref{thm5.18}).
 \bigskip\noindent

 \begin {prop}\label{7.4.1}
If $w$ is a perfect harmonic permutation
so that $B_I.F_w$ intersects a cycle $C_\alpha $ at a point given by the $\varepsilon $-basis $(\varepsilon_1,\ldots ,\varepsilon_m)$ of $T_S$-eigenvectors, then for any such eigenvector $\varepsilon_k$ it follows that the $\varepsilon $-basis is given by
 $\varepsilon_k=e_{2j-1}+ie_{2j}$ or $e_{2j-1}-ie_{2j}$ if $q$ and $p$ are even , and  $\varepsilon_k=e_{2j-1}+ie_{2j}$ , $e_{2j-1}-ie_{2j}$ , $e_q$ , $e_{q+1}$ or $e_{2q+1}$ if $q$ or $p$ is odd, depending on the dimension $m$ and the signature $\alpha$.
 \end {prop}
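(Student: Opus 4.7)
The plan is to follow the template of the corresponding propositions for $Sp(2n,\mathbb{R})$ (the proposition just before Corollary \ref{cor4.2.2}) and $SO^*(2n)$, adapted to the more intricate combinatorics of harmonic permutations. Because $w$ is a perfect harmonic permutation, Proposition \ref{lem5.13} gives $\dim S_w = \mathrm{codim}\,C_\alpha$, so Corollary \ref{fixthm} forces $S_w \cap C_\alpha \subset \mathrm{Fix}(T_S)$. Consequently each $\varepsilon_k$ in the split $\varepsilon$-basis of any intersection point must be a genuine $T_S$-weight vector. Inspecting the $T_S$-eigenbases set up in \S\ref{basepoint}, the only $T_S$-eigenvectors that can appear among the vectors of the flag are $e_{2j-1}\pm ie_{2j}$, together with $e_q,\,e_{q+1}$ (when $q$ is odd) and $e_{2q+1}$ (when $m=2n+1$). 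The proposition therefore reduces to showing that each $\varepsilon_k$ really is one of these single vectors and not some larger $T_S$-eigencombination.

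The first step is to write each vector of the flag basis of $F_w$ in the form $b.v_{w_k}=K_k+B_k$ using the case-by-case formulas of Remark \ref{rem5.4}, where $K_k$ involves only the basis vectors attached to the index $w_k$ itself, and $B_k$ involves only basis vectors already occurring in $b.v_{w_\ell}$ for $\ell<k$. Because $w$ is perfect harmonic, each pair $(-(2j-1),\pm 2j)$ is placed as close as possible to its mate, and the isolated entry $-q$ (if $q$ is odd) sits in the last position, so the leading coefficient $\lambda_k$ of a genuinely new basis vector inside $K_k$ is nonzero.

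The second step is recursive. Setting
$$\varepsilon_k=\sum_{j\le k} c_{kj}\,b.v_{w_j}$$
and expanding in the standard basis, one sees that the $K_k$-contribution lies in the span of basis vectors that have not appeared in $\varepsilon_1,\ldots,\varepsilon_{k-1}$, while the remainder lies in the span of vectors that have. Subtracting suitable multiples of the previously constructed $\varepsilon_\ell$ does not alter the flag; then the $T_S$-eigenvector condition forces every coefficient outside the $T_S$-eigenbasis to vanish. Since $\lambda_k\neq 0$, what survives is exactly one of the vectors $e_{2j-1}\pm ie_{2j}$, $e_q$, $e_{q+1}$ or $e_{2q+1}$. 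The choice of $\pm$ in $e_{2j-1}\pm ie_{2j}$, and the choice among $\{e_q,e_{q+1}\}$ or the appearance of $e_{2q+1}$, is dictated by the requirement $\varepsilon_k\in E^-$ or $\varepsilon_k\in E^+$ coming from $\alpha$, together with whether the relevant pair in $w$ is of type $(-(2j-1),2j)$ or $(-(2j-1),-2j)$.

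The main obstacle will be the case bookkeeping: perfect harmonic permutations freely mix the pair types $(-(2j-1),2j)$ and $(-(2j-1),-2j)$, carry a trailing $-q$ for odd $q$, and carry an isolated middle vector for $m=2n+1$. Each sub-case produces a slightly different $K_k$, and one must verify in every sub-case that the surviving eigenvector is precisely one of those listed in the statement, that no further candidate is possible, and that the correspondence with the signature $\alpha$ is consistent across the whole flag. Once this case analysis is complete, the identification of $\varepsilon_k$ is forced by the $T_S$-eigenvector constraint alone, and the proposition follows.
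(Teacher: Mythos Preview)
Your proposal is correct and follows essentially the same approach as the paper: decompose $b.v_{w_k}=K_k+B_k$ via the formulas of Remark~\ref{rem5.4}, use $\lambda_k\neq 0$ together with the fact that the leading vectors in $K_k$ do not occur in $b.v_{w_\ell}$ for $\ell<k$, and then let the $T_S$-eigenvector constraint (coming from Corollary~\ref{fixthm}) force $\varepsilon_k=K_k$. Your write-up is in fact somewhat more explicit than the paper's in invoking Proposition~\ref{lem5.13} and Corollary~\ref{fixthm} at the outset and in flagging the sub-case bookkeeping, but the logical skeleton is the same.
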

 \begin {proof}
 This is a consequence of the following:
 \begin {enumerate}
\item $w$ is a perfect harmonic permutation and the flag basis is that of $w(F_I)$.
\item We have the following cases
\begin{eqnarray}
\bullet~ b.v_j &=& \eta_j(e_r +e_{2q-r+1})+\zeta_j(e_r-e_{2q-r+1})+\tilde{\eta}_j(e_{r+1} +e_{2q-r})+\tilde{\zeta}_j(e_{r+1}-e_{2q-r})+B_j \nonumber \\
&=& K_j+B_j,\nonumber.
 \end{eqnarray}
 
$\bullet ~b.v_j = \eta_j(e_{2q+r} +ie_{2q+r+1})+B_j=K_j+B_j,$\\

where $\tilde{\eta}_j= \pm i \eta_j$ and $\tilde{\zeta}= \pm i \zeta_j$ and $B_j$ does not involve the basis vectors $e_r $, $e_{r+1}$, $e_{2q-r+1}$ and $e_{2q-r}$.\\

$\bullet ~b.v_j=\eta_j(e_{2q+1})+B_j=K_j+B_j,~ \text{if}~m=2n+1$

 \item $\eta_j\not=0$ and $\tilde{\eta}_j \neq 0$.

 \item The intersection $S_w\cap C_\alpha $ is a flag defined by $T_S$-eigenvectors.
 \end {enumerate}
 From the expression for $v_j$ it is immediate that all of the possibilities in the statement
 occur.    Furthermore, since the  $\eta_j\not=0$ and $\tilde{\eta}_j \neq 0$ are non-zero, for every $j$ a non-zero contribution from $K_j$
 occurs in the sum
 $$
 v_j=\sum_{k\le j} c_{kj}b.v_{w_k}\,.
 $$
 Since $e_{2j-1}+ie_{2j}$ and $e_{2j-1}-ie_{2j}$ and $e_q$ or $e_{q+1}$ do not occur in $b.v_{w_k}$ for $k<j$, it follows that $\varepsilon_j=K_j+E_j$
 in the standard basis.  Finally, since $\varepsilon_j$ is a $T_S$-eigenvector, it follows that $\varepsilon_j=K_j$ and
 is of the type in the the statement of the proposition.
 \end {proof}

\noindent
As a result of the above Proposition the following corollary gives us all intersection points of $S_w \cap C_{\alpha}$:
\begin{cor}\label{cor12.4}
Let $D_{\alpha}$ be a flag domain parametrized by a sequence $\alpha$ and $w \in W_I$
 be a perfect harmonic permutation such that $S_w \cap D_{\alpha}\neq \phi$. Then the
following algorithm produces us all intersection points of $S_w \cap C_{\alpha}$:\\

\textbf{If $q$ is even:}
\begin{itemize}
\item Consider a copy of $\alpha$ denoted by $\beta$.
	\item  For any pair $(-(2j-1),2j), 1\leq j \leq \frac{q}{2},$ in $w$ if the corresponding signature of it in $\beta$ is $+-$ then replace this $+-$ in $\beta$ by $$<e_{2q-2j+1}+ie_{2q-2j+2},e_{2j-1}-ie_{2j}> \text{ or } <e_{2q-2j+1}-ie_{2q-2j+2},e_{2j-1}+ie_{2j}>,$$
	and if the corresponding signature of it in $\beta$ is $-+$ then replace this $-+$ in $\beta$ by
	$$<e_{2j-1}+ie_{2j},e_{2q-2j+1}-ie_{2q-2j+2}> \text{ or }  <e_{2j-1}-ie_{2j},e_{2q-2j+1}+ie_{2q-2j+2}>$$
	
	\item  For any pair $(-(2j-1),-2j), 1\leq j \leq \frac{q}{2},$ in $w$ if the corresponding signature of it in $\beta$ is $+-$ then replace this $+-$ in $\beta$ by $$<e_{2q-2j+1}+ie_{2q-2j+2},e_{2j-1}+ie_{2j}> \text{ or } <e_{2q-2j+1}-ie_{2q-2j+2},e_{2j-1}-ie_{2j}>,$$
	and if the corresponding signature of it in $\beta$ is $-+$ then replace this $-+$ in $\beta$ by
	$$<e_{2j-1}+ie_{2j},e_{2q-2j+1}+ie_{2q-2j+2}> \text{ or } <e_{2j-1}-ie_{2j},e_{2q-2j+1}-ie_{2q-2j+2}>$$
	
	\item For the remaining numbers, for each $q+1 \leq j \leq n-1$ replace the corresponding $+$ in $\beta$ by $(e_{2j-1}+i e_{2j})$ and for $\pm n$ replace the corresponding $+$ by $(e_{2n-1} \pm i e_{2n})$.
\end{itemize}
\textbf{If $q$ is odd:}
\begin{itemize}
	\item  For any pair $(-(2j-1),2j), 1\leq j \leq \frac{q-1}{2}$, in $w$ if the corresponding signature of it in $\beta$ is $+-$ then replace this $+-$ in $\beta$ by $$<e_{2q-2j+1}+ie_{2q-2j+2},e_{2j-1}-ie_{2j}> \text{ or } <e_{2q-2j+1}-ie_{2q-2j+2},e_{2j-1}+ie_{2j}>,$$
	and if the corresponding signature of it in $\beta$ is $-+$ then replace this $-+$ in $\beta$ by
	$$<e_{2j-1}+ie_{2j},e_{2q-2j+1}-ie_{2q-2j+2}> \text{ or } <e_{2j-1}-ie_{2j},e_{2q-2j+1}+ie_{2q-2j+2}>$$
	\item  For the number $-q$ replace the corresponding $+$ in $\beta$ by $e_{q+1}$.
	\item For the remaining numbers, for each $q+1 \leq j \leq n-1$, replace the corresponding $+$ in $\beta$ by $(e_{2j-1}+i e_{2j})$ and for $\pm n$ replace the corresponding $+$ by $(e_{2n-1} \pm i e_{2n})$.
\end{itemize}
Each point obtained from this algorithm is a point of the intersection of $S_w \cap D_{\alpha}$ .
\end{cor}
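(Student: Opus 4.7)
The plan is to leverage Proposition \ref{7.4.1}, which together with Corollary \ref{fixthm} guarantees that every point of $S_w \cap C_\alpha$ is $T_S$-fixed and its $\varepsilon$-basis of $T_S$-eigenvectors is drawn from the explicit list $\{e_{2j-1}\pm ie_{2j}\} \cup \{e_{q+1},e_{2q+1}\}$. The proof of the corollary then reduces to showing that the algorithm enumerates exactly those tuples of such eigenvectors which are realized by some $b \in B_I$, produce a maximally $b$-isotropic flag, and carry the prescribed signature $\alpha$.

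I would carry this out in three steps. First, using Remark \ref{rem5.4}, I would attach to each entry of the perfect harmonic permutation the set of $T_S$-eigenvectors which can appear in the corresponding flag position: an entry inside a pair $(-(2j-1),\pm 2j)$ contributes a vector from $\{e_{2j-1}\pm ie_{2j},\, e_{2q-2j+1}\pm ie_{2q-2j+2}\}$, a singleton $\pm(q+i)$ contributes $e_{2q+2i-1}\pm ie_{2q+2i}$, and the boundary entries $-q$, $\pm n$, or the middle vector (when $m=2n+1$) contribute $e_{q+1}$, $e_{2n-1}\pm ie_{2n}$, or $e_{2q+1}$ respectively. Second, maximal $b$-isotropy of the flag, together with the orthogonality computations $b(e_{2j-1}+ie_{2j},\,e_{2q-2j+1}\pm ie_{2q-2j+2})$, which vanish in exactly one of the two $\pm$ choices depending on the parity of $q$, forces the two entries of a pair $(-(2j-1),2j)$ to carry opposite $i$-decorations while those of $(-(2j-1),-2j)$ carry the same decoration. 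Third, since $h(e_{2j-1}+ie_{2j},\,e_{2j-1}+ie_{2j})$ and $h(e_{2j-1}-ie_{2j},\,e_{2j-1}-ie_{2j})$ have opposite signs, matching the $+-$ versus $-+$ (resp.\ $++$ versus $--$) portion of $\beta$ selects between the two ordered eigenvector assignments offered by the algorithm.

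To close the loop, realizability of each enumerated tuple by a suitable $b \in B_I$ follows by tuning the free coefficients in the construction of Proposition \ref{lemSO}, and conversely every intersection point arises this way by the first step. The main obstacle will be the parity bookkeeping: the statement splits on $q$ even versus odd and on $m = 2n$ versus $m = 2n+1$, and in each subcase the boundary entries $-q$, $e_{2q+1}$, and $\pm n$ must be handled separately. In particular, the convention built into Definition \ref{de2} which flips the sign of $n$ when the total number of minuses among the pairs is odd must be shown to be exactly the condition needed for the assembled flag to lie in the correct Weyl orbit $S_n \ltimes \mathbb{Z}_2^{n-1}$ and hence the correct flag domain; this compatibility check is the combinatorial heart of the argument.
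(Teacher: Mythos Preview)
Your overall architecture is sound, but Step~2 contains a genuine error. You claim that maximal $b$-isotropy, via the pairings $b(e_{2j-1}+ie_{2j},\,e_{2q-2j+1}\pm ie_{2q-2j+2})$, forces the $i$-decorations on the two entries of a pair to be opposite (for $(-(2j-1),2j)$) or the same (for $(-(2j-1),-2j)$). But these pairings vanish for \emph{both} sign choices: with $b$ diagonal in the standard basis, the index sets $\{2j-1,2j\}\subset\{1,\dots,q\}$ and $\{2q-2j+1,2q-2j+2\}\subset\{q+1,\dots,2q\}$ are disjoint, so every cross pairing is zero. Isotropy alone only excludes having both of $e_{2j-1}\pm ie_{2j}$ (or both of $e_{2q-2j+1}\pm ie_{2q-2j+2}$) in the isotropic half, leaving four unordered combinations, not the two per signature that the algorithm lists.

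The actual constraint is not $b$-isotropy but the shape of the $B_I$-orbit of the \emph{second} basis vector of the pair. For the pair $(-(2j-1),2j)$ the second vector is $e_{2j}+e_{2q-2j+1}$, and by Remark~\ref{rem5.4} its $B_I$-orbit consists only of combinations of $(e_\ell+e_{2q-\ell+1})$'s. To split such an expression one must add a multiple of the already-constructed $\varepsilon_1$; carrying out that linear algebra (as the paper does explicitly in the proof of Theorem~\ref{thm5.18} for $j=1$) shows that $\varepsilon_1=e_{2j-1}\mp ie_{2j}$ forces $\varepsilon_2=e_{2q-2j+1}\pm ie_{2q-2j+2}$, with no other outcome possible. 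The same orbit computation for $e_{2j}-e_{2q-2j+1}$ (the second vector when the pair is $(-(2j-1),-2j)$) yields the same-sign rule. So replace your isotropy argument by this orbit computation; the remaining steps (signature matching in Step~3, realizability via Proposition~\ref{lemSO}, and the parity bookkeeping for $\pm n$) are correct as you outline them. In the paper itself the corollary is stated without separate proof, as a direct reading of Proposition~\ref{7.4.1}, and the orbit computation just described is deferred to the proof of Theorem~\ref{thm5.18}.
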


\begin{thm}\label{thm5.18}
 A Schubert variety $S_w$ which is parametrized by a perfect harmonic permutation $w$ intersects $2^{\frac{q}{2}}$ flag domains if $q$ is even and intersects the base cycles of these flag domains in $2^q$ points. If $q$ is odd, it intersects $2^{\frac{q-1}{2}}$ flag domains and intersects the base cycles of these flag domains in $2^{q-1}$ points.
\end{thm}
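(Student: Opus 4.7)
The plan is to derive both counts directly from the algorithm in Corollary~\ref{cor12.4}, which enumerates all intersection points of $S_w\cap C_\alpha$ (this algorithm is exhaustive because Corollary~\ref{fixthm} forces every intersection point to be a $T_S$-fixed point and Proposition~\ref{7.4.1} restricts the admissible $\varepsilon$-basis vectors to exactly the ones replaced by the algorithm). The counting then reduces to a bookkeeping exercise performed independently over each constituent of~$w$.

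First I would enumerate the flag domains~$D_\alpha$ with $S_w\cap D_\alpha\neq\emptyset$. For the case $q$ even, the algorithm shows that a signature $\alpha$ has a non-empty intersection iff at the two positions associated to each pair $(-(2j-1),\pm 2j)$ in $w$ the corresponding two entries of $\alpha$ read either $+-$ or $-+$, while at all remaining positions (those coming from $\pm(q+i)$ with $1\le i\le (p-q)/2$) the entry must be~$+$. Since there are $q/2$ such pairs and the two local choices for each pair are independent, the set of admissible signatures has cardinality $2^{q/2}$. For $q$ odd, the extra singleton $-q$ forces a~$+$ in the last slot (Remark~\ref{rempo}), so only the $(q-1)/2$ pairs contribute, yielding $2^{(q-1)/2}$ admissible signatures.

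Next I would fix one such admissible signature~$\alpha$ and count $|S_w\cap C_\alpha|$. Inspecting Corollary~\ref{cor12.4} step by step, each pair in $w$ (regardless of whether it has the form $(-(2j-1),2j)$ or $(-(2j-1),-2j)$, and regardless of whether the local signature of $\alpha$ is $+-$ or $-+$) offers exactly two alternative replacements by $T_S$-eigenvectors, and the replacements at different pairs are independent. The entries coming from $\pm(q+i)$, as well as the singleton~$-q$ in the odd case, admit only one replacement each. Hence each admissible $\alpha$ contributes $2^{q/2}$ points when $q$ is even, and $2^{(q-1)/2}$ points when $q$ is odd.

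Multiplying the two counts then yields $2^{q/2}\cdot 2^{q/2}=2^q$ total intersection points for $q$ even, and $2^{(q-1)/2}\cdot 2^{(q-1)/2}=2^{q-1}$ for $q$ odd, which is exactly the claim. The only delicate point in this plan is justifying that the algorithm of Corollary~\ref{cor12.4} is both complete and free of repetitions, i.e.\ that distinct choices in the algorithm produce distinct flags; this follows because different pair-wise choices of the $T_S$-eigenvectors $e_{2j-1}\pm ie_{2j}$ and $e_{2q-2j+1}\pm ie_{2q-2j+2}$ generate distinct subspaces at the position where the corresponding pair appears, so the resulting full flags are pairwise distinct.
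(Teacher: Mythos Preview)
Your proposal is correct and follows essentially the same counting as the paper: each pair in $w$ contributes four local choices (two signature patterns $+-$/$-+$, each admitting two eigenvector options), while the remaining entries and, for $q$ odd, the singleton $-q$ are forced. The only difference is packaging --- you read the counts directly off Corollary~\ref{cor12.4}, whereas the paper's proof re-derives those four possibilities per pair by explicit analysis of the orbits $B_I.\langle e_1-e_{2q}\rangle$ and $B_I.\langle e_2\pm e_{2q-1}\rangle$, which is really the computation underlying Corollary~\ref{cor12.4} in the first place.
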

\begin{proof}
Let $w \in W_I$ be a perfect harmonic permutation. We first show that if $q$ is even, then $S_w\cap C_\alpha $ consists of $2^q$ points and $2^{q-1}$ points if q is odd. Since $w$ is a perfect harmonic permutation, we have two cases.  In the first case if $w$ contains the
pair $(-1~2)$ then the pair $(-1~2)$ sits inside consecutive boxes in $w$. The goal here is to show that there is exactly $4$-possibilities for this pair which can be
completed to maximal isotropic flag. We begin by considering the $B_I$-orbit of $<e_1-e_{2q}>$. By Remark \ref{rem5.4} the elements  in this orbit have the form
$$\beta_1(e_1+e_{2q})+....+\beta_{2n}(e_1-e_{2q}).$$
The question is how many 1-dimensional subspaces (spanned by vectors in Proposition \ref{7.4.1}) do we have such that these subspaces can be complete to
maximal isotropic flag? To compute this number we denote by $v_1$ the vector we have from the first step which spans the 1-dimensional subspace, in the second step we consider the orbit $B_I.<e_2+e_{2q-1}>$, which has points of the form
$$\beta_1(e_1+e_{2q})+\beta_{2}(e_2+e_{2q-1}).$$
The $2$-dimension subspace in the flag is spanned by linear combinations of the form
$$v_2=\beta_1(e_1+e_{2q})+\beta_{2}(e_2+e_{2q-1})+\gamma v_1.$$
 Note that $v_2$ should be in $E^+$ or $E^-$ and of the form stated in proposition \ref{7.4.1}, and therefore $v_1$ should contain the terms $e_1$ and $e_2$ or the terms $e_{2q-1}$ and $e_{2q}$. Thus we have the following 4 possibilities of $v_1$ as follows: $e_1-ie_2,e_1+ie_2,e_{2q-1}+ie_{2q},e_{2q-1}-ie_{2q}$.\\
If $v_1=e_1\mp ie_2$, then for $\beta_1= \pm i,~\beta_2=1,$ and $~\gamma=\mp i$, the vector $v_2$ is $v_2=e_{2q-1} \pm ie_{2q}$, so the 2-dimensional subspace corresponding to the pair $(-1~2)$ is spanned by $$<e_1\mp ie_2,e_{2q-1} \pm ie_{2q}>.$$
If $v_1=e_{2q-1} \mp ie_{2q}$, then for $\beta_1=1,~\beta_2=\pm i,$ and $~\gamma=\mp i$, it follows that $v_2$ is $v_2=e_1\pm ie_2$. Thus the 2-dimensional subspace corresponding to the pair $(-1~2)$ is spanned by $$<e_{2q-1}\mp ie_{2q},e_1\pm ie_2,>.$$
Having constructed the $2$-dimensional space we ignore the pair $(-12)$ in $w$ and repeat the same steps for the next pairs step by step, So if $w$ contains the pair $(-(2j-1),2j), 1\leq j \leq \frac{q}{2}$,
then in the same way the only possible 2-dimensional subspaces which can be completed to maximal isotropic flags are the subspaces spanned by: $$<e_{2j-1}\mp ie_{2j},e_{2q-2j+1}\pm ie_{2q-2j+2}> \text{ or }  <e_{2q-2j+1}\pm ie_{2q-2j+2},e_{2j-1}\mp ie_{2j}>.$$

\noindent

The second case is that where $w$ contains the
pair $(-1-2)$ which sits inside consecutive boxes in $w$. look at the orbit $B_I.<e_1-e_{2q}>$, then this orbit has points of the form $$\beta_1(e_1+e_{2q})+....+\beta_{2n-1}(e_2-e_{2q-1})+\beta_{2n}(e_1-e_{2q}).$$
 We also consider the orbit $B_I.<e_2-e_{2q-1}>$ which has points of the form $$\beta_1(e_1+e_{2q})+....+\beta_{2n-1}(e_2-e_{2q-1}).$$
Then, if we have 1-dimensional subspace spanned by $v_1$ from the first step, the 2-dimensional subspace in our flag spanned by $v_1$ and a vector $v_2$ of the form
$v_2=\beta_1(e_1+e_{2q})+\beta_{2}(e_2+e_{2q-1})+\beta_{3}(e_2-e_{2q-1})+\gamma v_1$. So we have 4-possibilities of $v_1$ which can be extend to maximal isotropic flag. These are : $e_1-ie_2,e_1+ie_2,e_{2q-1}+ie_{2q},e_{2q-1}-ie_{2q}$.\\
If $v_1=e_1\mp ie_2$, then for $\beta_1=\mp i,\beta_2=0,\beta_3=-1,$ and $\gamma=\pm i$, the vector $v_2$ is $v_2=e_{2q-1}\mp ie_{2q}$. Thus the 2-dimensional subspace corresponding to the pair $(-1-2)$ is spanned by $$<e_1\mp ie_2,e_{2q-1}\mp ie_{2q}>.$$
If $v_1=e_{2q-1}\mp ie_{2q}$, then for $\beta_1=1,\beta_2=0,\beta_3=\mp i,$ and $\gamma=\mp i$, the vector $v_2$ is $v_2=e_1\pm ie_2$. As a result the 2-dimensional subspace corresponding to the pair $(-1-2)$ is spanned by $$<e_{2q-1}\mp ie_{2q},e_1\mp ie_2,>.$$
Having determined the $2$-dimensional subspace, we ignore the pair $(-1-2)$ from $w$ and repeat the same steps for the next pairs step by step. More generally if $w$ contains the pair $(-(2j-1),-2j), 1\leq j \leq \frac{q}{2}$,
then by the same method the only possible 2-dimensional subspaces which can be completed to maximal isotropic flag are $$<e_{2j-1}\mp ie_{2j},e_{2q-2j+1}\mp ie_{2q-2j+2}> \text{ or } <e_{2q-2j+1}\mp ie_{2q-2j+2},e_{2j-1}\mp ie_{2j}>$$
Therefore for each pair of $w$ we have 4-possibilities.

\bigskip\noindent
For the remaining numbers, recall that $w$ is a perfect harmonic permutation. In particular all numbers in the remaining boxes sit in an increasing order. Thus the only possible for these which can be completed to a maximal isotropic flag are the following point:
If $n$ is positive, the point is the flag associated to the ordered basis $$e_{2q+1}+ie_{2q+2},...,e_{2n-1}+ie_{2n},$$ and if $n$ is negative, then the point is the flag associated to the ordered basis $$e_{2q+1}+ie_{2q+2},...,e_{2n-3}+ie_{2n-2},e_{2n-1}-ie_{2n}.$$
Hence, if $q$ is even, then $w$ contains $\frac{q}{2}$ pairs and each pair has 4-possibilities. Therefore in this case the number of possible intersection points is $4^{\frac{q}{2}}.1=2^q$. If $q$ is odd, then $w$ contains $\frac{q-1}{2}$ pairs and each pair has 4-possibilities then the number of possible intersection points is $4^{\frac{q-1}{2}}.1=2^{q-1}$.

\bigskip\noindent
Finally, we show that the points described above belong to exactly $\frac{q}{2}$ flag domains if q is even and to $\frac{q-1}{2}$ flag domains if q is odd. For this recall that for each
pair in $w$ we have 4-possibilities of 2-dimensional subspaces and note that the signature of these subspaces are $+-$ and $-+$, and the signature of the remainder is
$++...++$. Thus the number of flag domains which have non-empty intersection with $S_w$ for the given $w$  is $2^{\frac{q}{2}}$ flag domains if q is even and is $2^{\frac{q-1}{2}}$ flag domains if q is odd.

\bigskip\noindent

Also, since each two of these 4-possibilities of 2-dimensional subspaces have the same signature then for any fixed signature for a flag domain there is
$2^{\frac{q}{2}}$ points belong to the base cycle of that flag domain if q is even and  $2^{\frac{q-1}{2}}$ points belong to the same base cycle of that flag domain if q is odd.
\end{proof}
\bigskip\noindent
\begin{rem}
In the case of the group $SL_{2n}^C$ with the real form $SU(p, q)$, Brecan \cite{Bre} shows that the number of Iwasawa-Schubert
varieties which intersect at least one base cycle and has the minimal dimension $pq$ is $(2n-1).(2n-3)...(2n-2q+1)$. But for the group $SO(2n,\mathbb{C})$ with real form $SO(p,q)$
, if q is even, the number of Schubert varieties $S_w$ which have the minimal dimension $\frac{pq}{2}$ and intersect at least one base cycle  is $n.(n-2)...(n-q+2)$, and
if q is odd, the number of Schubert varieties $S_w$ which have the minimal dimension $\frac{pq-1}{2}$ and intersect at least one base cycle  is $(n-2).(n-4)...(n-q+1)$.\\
In the case of $SO(2n+1,\mathbb{C})$ with real form $SO(p,q)$
, if q is even, the number of Schubert varieties $S_w$ which have the minimal dimension $\frac{pq}{2}$ and intersect at least one base cycle   is $(n-1).(n-3)...(n-q+1)$, and
if q is odd, the number of Schubert varieties $S_w$ which has the minimal dimension $\frac{pq}{2}$ and intersect at least one base cycle   is $(n-2).(n-4)...(n-q+1)$.\qed
\end{rem}

\bigskip\noindent
The following remark, which is a consequence of the proof of Theorem \ref{thm5.18}, describes all intersection points of $S_w$ with the base cycles $C_{\alpha}$.

\begin{rem}
To determine all intersection points between the base cycles and the Iwasawa Schubert variety $S_w$ of complimentary dimension we will define a set for each case of $q$:\\
\textbf{If $q$ is even:} Let $ w \in W_I$ be a perfect harmonic permutation and define \\
$Swite_w :=\{\psi(w_r): w_r$ ~is~obtained~from~$w$~by~switching ~none,~some or~all~pairs\\
 $(-(2i-1),2i)~by ~(2i,-(2i-1))~or~(-2i,2i-1)~or~((2i-1),-2i)$~and~switching ~none,~some\\ or~all~pairs~$(-(2i-1),-2i)~$ by $~(-2i,-(2i-1))~or~(2i,(2i-1))~or~((2i-1),2i), 1\leq i\leq \frac{q}{2}\} \subset W_S$.
 Define $\mathbb{F}_e(Fix~T_S)$ to be the set all maximally $b$-isotropic flags associated to the basis
$$e_1+ie_2,e_3+ie_4,......,e_{2n-1}+ie_{2n},e_{2n-1}-ie_{2n},......,e_1-ie_2$$
Let $M_w\subset \mathbb{F}_e(Fix~T_S)$ be the set of all maximally $b$-isotropic flags associated to all elements in $Swite_w $. Note that we have $\frac{q}{2}$ of
the pairs $(-(2i-1),2i)$ and $(-(2i-1),-2i),1\leq i\leq \frac{q}{2} $
 in any $w \in W_I$, and for each pair we have 4 possibles to switch it, so the cardinality of $Swite_w $ is $4^{\frac{q}{2}}=2^q$.
 The set $Swite_w $ gives us all intersection points of $S_w$ and each $2^{\frac{q}{2}}$ of these points belong to only one flag domain where these points of intersection sits in the base cycle of that flag domain. \\
 \textbf{If $q$ is odd:}  Let $ w \in W_I$ be a perfect harmonic permutation and define\\
 $Swito_w :=\{\psi(w_r): w_r$ ~is~obtained~from~$w$~by~switching ~none,~some or~all~pairs \\
 $(-(2i-1),2i)~by ~(2i,-(2i-1))~or~(-2i,2i-1)~or~((2i-1),-2i)$, $1\leq i\leq \frac{q}{2}\} \subset W_S$.
 Define $\mathbb{F}_o(Fix~T_S)$to be the set all maximal b-isotropic flags associated to the basis
$$e_1+ie_2,e_3+ie_4,...,e_{q-2}+ie_{q-1},e_{q+2}+ie_{q+3},...,e_{2n-1}+ie_{2n},e_q,$$ $$e_{q+1},e_{2n-1}-ie_{2n},...,e_{q+2}-ie_{q+3},e_{q-2}-ie_{q-1},...,e_1-ie_2$$
Let $M_w\subset \mathbb{F}_o(Fix~T_S)$ be the set of all maximally $b$-isotropic flags associated to all elements in $Swito_w $. Note that we have $\frac{q-1}{2}$ of the
pairs $(-(2i-1),2i), 1\leq i\leq \frac{q}{2} $ in any $w \in W_I$, and for each pair we have 4 possibilities for switching it. Hence the cardinality of $Swito_w $ is $4^{\frac{q-1}{2}}=2^{q-1}$.
  The set $Swito_w $ gives us all intersection points of $S_w$ and each $2^{\frac{q-1}{2}}$ of these points belong to only one flag domain where these points of intersection sits in the base cycle of that flag domain.
\end{rem}

\begin{ex}
 In $G_0=SO(6,4)$, fix $w=(-35-142)$ a perfect harmonic permutation, then $Swite_w=\{(251-3-4),(25-1-34),(-2513-4),(-25-134),(-3512-4),\\(351-2-4),(-35-124),
 (35-1-24),(25-4-31),(-25-431),(254-3-1),(-2543-1),\\(-35-421),(35-4-21),(-3542-1),(354-2-1)\} $
\end{ex}

 \begin{ex}
   In $G_0=SO(5,3)$, fix $w=(-124-3)$ a perfect harmonic permutation, then $Swito_w=\{(1-23-4),(-213-4),(2-13-4),(-123-4)\} $.
 \end{ex}

\noindent
Recall that in the cases of $SP(2n,\mathbb{R})$ and $SO^*(2n)$ every flag domain intersects all Schubert varieties of complementary dimension.
But in the case of $SO(p,q)$ we don't have this property except in a very special case. We explain this case in the following example.
\begin{ex}\label{7.4.11}
If $n=q+1$, then the flag domain parametrized by the sequence $$\alpha=+-+-...+-+-+$$ intersects all Schubert varieties of dimension $\frac{pq}{2}$ if $q$ is even.
And the flag domain parametrized by the sequence $$\beta=+-+-...+-+-++$$ intersects all Schubert varieties of dimension $\frac{pq-1}{2}$ if $q$ is odd.
\end{ex}


\chapter{Cycle intersection for $SP(2p,2q)$}

Our work here is devoted to the case of the real form $SP(2p,2q)$ of $SP(2n,\mathbb{C})$. As in the case $SO(p,q)$, the results here are stated in terms of algorithms (See Definitions \ref{majorD1} and \ref{majorD2}).

\section{Conditions for $S_w \cap C_{\alpha} \neq \emptyset$}

\begin{defn}\label{majorD1}
An element $w \in W$ is called \textit{\textbf{a major permutation}} if it satisfies the following conditions:\\
In the one line notation of the permutation the number $-(2i-1), 1\leq i \leq q$, sits in any place to the left of the number $(2i)$ or $(-2i), 1\leq i \leq q$, or the number $(-2i), 1\leq i \leq q$, sits in any place to the left of the number $(2i-1)$ or $-(2i-1), 1\leq i \leq q$,  and the order of the numbers $2q+i$ or $-(2q+i)$, where $1\leq i \leq p-q$ is arbitrary.
\end{defn}

\begin{ex}
In $SP(4,2)$ the permutation $(-1 25-34)$ is a major permutation while $(53 1-2-4)$ is not.
\end{ex}

		\bigskip\noindent
The standard basis and the basis (\ref{basis4.2}) define two maximal tori for $SP(2n,\mathbb{C})$,and therefore they define two isomorphic Weyl groups.
Define $W_{T}$ to be the Weyl group with respect to  the standard basis $\{e_1,...,e_{2n}\}$ of $\mathbb{C}^{2n}$.
Let $W_I$ be the Weyl group with respect to the basis
\begin{eqnarray}\label{basis4.2}
 e_1+e_{2q}, e_{2n-2q+1}+e_{2n},...,e_q+e_{q+1},e_{2n-q}+e_{2n-q+1},e_{2q+1},e_{2q+2},...,e_{n},e_{n+1},&&  \nonumber \\
 e_{n+2},...,e_{2n-q},e_q-e_{q+1},e_{2n-q}-e_{2n-q+1},...,e_1-e_{2q},e_{2n-2q+1}-e_{2n}.~~~~~~~~~& &
\end{eqnarray}

\noindent
 Each of these Weyl groups isomorphic to $S_n \ltimes \mathbb{Z}_2^{n}$. For later use define the bijective map $\psi$ between $W_I$ and $W_{S}$ by
 $\psi( \pm(2i-1))=\pm (2q-i+1),~ \psi(\pm 2i)=\mp i$ if $1\leq i\leq  q$ and $\psi(\pm i)= \pm i $ if $i>2q$. Note that $-(2q-i+1)=2n-2q-i+2$.

		\bigskip\noindent

\begin{rem}\label{rem6.4}
For $v_i$ any basis vector of the type in \ref{basis4.2} and $b\in B_I$ the form of $b(v_i)$ is given as follows:
\begin{itemize}
	\item $b.(e_i - e_{2n-i+1})=\eta_i(e_i +e_{2q-i+1})+\zeta_i(e_i-e_{2q-i+1})+B_i=K_i+B_i,$
\item $b.(e_i+ e_{2n-i+1})=\lambda_i(e_i +e_{2q-i+1})+A_i,$
\item $b(e_{2n-q+i} - e_{2n-i+1})=\beta_i(e_{2n-q+i} + e_{2n-i+1})+\delta_i(e_{2n-q+i} - e_{2n-i+1})+\tilde{B}_i=\tilde{K}_i+\tilde{B}_i,$
\item $b(e_{2n-q+i} + e_{2n-i+1})=\mu_i(e_{2n-q+i} + e_{2n-i+1})+\tilde{A}_i,$
\item $b(e_{2q+j})=\nu_j(e_{2q+j})+\tilde{A}_i,$
\end{itemize}
 with $\lambda_i\not=0$,$\mu_i\not=0$ and $\nu_j\not=0$.\\
Note that in the above orbits, if $b$ is chosen appropriately, then we can arrange the linear combinations above to have the vectors in the standard basis.\qed
 \end{rem}


\begin{prop}\label{lemmajor}
If $w$ is a major permutation, then the flag $F_{\psi(w)}$ belongs to the orbit $B_I(F_w)$.
\end{prop}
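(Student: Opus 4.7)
The plan is to follow the same inductive construction used in the proofs for $Sp(2n,\mathbb{R})$, $SO^*(2n)$, and particularly Proposition \ref{lemSO} for $SO(p,q)$. Given a major permutation $w$, we build an element $b \in B_I$ such that $b(F_w) = F_{\psi(w)}$ by determining, step by step, its action on the ordered basis $(v_{w_1}, \ldots, v_{w_n})$ that defines the first $n$ subspaces of $F_w$. With the convention $e_{-k} = e_{2n-k+1}$, the target flag $F_{\psi(w)}$ has its $j$-th subspace equal to $\langle e_{\psi(w_1)}, \ldots, e_{\psi(w_j)}\rangle$, so it suffices to arrange $b(v_{w_j})$ to be a nonzero multiple of $e_{\psi(w_j)}$ plus a linear combination of the vectors placed at earlier steps.

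The base case $j = 1$ splits according to the value of $w_1$. If $|w_1| > 2q$, then $v_{w_1}$ is already one of the standalone vectors $e_{2q+r}$, whose $B_I$-orbit (as in Remark \ref{rem6.4}) consists of $\nu_j e_{2q+r}$ plus tail terms that can be killed, leaving $b(v_{w_1}) = e_{\psi(w_1)}$. If $|w_1| = 2i-1$ with $1 \leq i \leq q$, then $v_{w_1}$ is of the form $e_i \pm e_{2q-i+1}$ (or its dual partner), and by the first two lines of Remark \ref{rem6.4} the orbit contains vectors whose leading part is a multiple of $e_i \pm e_{2q-i+1}$ with a nonzero free coefficient; choosing coefficients appropriately arranges $b(v_{w_1}) = e_{\pm(2q-i+1)} = e_{\psi(w_1)}$. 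The case $|w_1| = 2i$ is handled analogously via the orbits of $e_{2n-q+i} \pm e_{2n-i+1}$, producing $b(v_{w_1}) = e_{\mp i} = e_{\psi(w_1)}$.

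The inductive step for $j \geq 2$ combines the same internal (within-$B_I$) cancellation with an external cancellation against the vectors $b(v_{w_\ell})$ constructed in previous steps. In each case the same three-case split applies, and one arrives at an expression of the form $\lambda \cdot e_{\psi(w_j)}$ plus a tail lying in $\langle e_{\psi(w_1)}, \ldots, e_{\psi(w_{j-1})} \rangle$; subtracting off the tail yields the vector needed to extend the partial flag, so that after $n$ steps we obtain $b(F_w) = F_{\psi(w)}$.

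The principal obstacle is verifying that this recursion never fails, and this is precisely where the major condition enters. When $|w_j| = 2i-1$ or $|w_j| = 2i$, we must ensure that the basis vector corresponding to $\psi(w_j)$ remains available as a leading term at step $j$, i.e., that it has not been forced to appear in some $b(v_{w_\ell})$ for $\ell < j$ in a way that would violate the maximally $b$-isotropic condition for the partial flag. The major condition, which demands that for each $i$ either $-(2i-1)$ precedes $\pm 2i$ or $-2i$ precedes $\pm(2i-1)$, is exactly what guarantees this, ensuring that the correct sign combination appears first to create the needed dual structure. A case-by-case verification following the pattern of Proposition \ref{lemSO}, treating the possible orderings of the conjugate pairs $(\pm(2i-1), \pm 2i)$ separately, completes the argument.
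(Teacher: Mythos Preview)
Your proposal is correct and follows essentially the same route as the paper: an inductive construction of $b\in B_I$ with $b(F_w)=F_{\psi(w)}$, using the orbit descriptions of Remark~\ref{rem6.4} and the major condition to guarantee that each step succeeds. The only notable difference is bookkeeping: the paper separates the inductive step into five cases rather than three, treating $w_{j+1}=2i$ and $w_{j+1}=2i-1$ (positive entries) as distinct cases that require the external cancellation against a previously constructed $e_{\tilde w_\ell}$, whereas you fold these into your ``odd/even index $\le 2q$'' cases and invoke external cancellation generically. Since you do flag the external cancellation and the role of the ordering condition, this is a matter of presentation, not substance.
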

\begin{proof}
let $w \in W_I$ be a major permutation. Denote by
$$\{0\} \subset <u_{w_1}> \subset <u_{w_1},u_{w_2}> \subset ...... \subset <u_{w_1},...,u_{w_n}>$$
the first  $n$ subspaces of $F_w=w(F_I)$, where $u_{w_i}$ is a vector from the basis (\ref{basis4.2}).
Let $\tilde{w} \in W_{T} $ be the image of $w$ under the bijection $\psi$ and let $Y_{\tilde{w}}$ be the isotropic flag associated to $\tilde{w}$ such that the first half of $Y_{\tilde{w}}$ is
$$\{0\} \subset <e_{\tilde{w}_1}> \subset <e_{\tilde{w}_1},e_{\tilde{w}_2}> \subset ...... \subset <e_{\tilde{w}_1},...,e_{\tilde{w}_n}>$$
where $\varepsilon_i=e_{\tilde{w}_i}$ is a vector in the standard basis of $\mathbb{C}^{2n}$. To show that the flag $Y_{\tilde{w}}$ is an intersection point in $B_I.F_w\cap C_\alpha $ we will construct a $b$ with $b(F_w)=Y_{\tilde w}$ by using induction. From the definition of a major permutation, there are three possibilities for $w_1$: $w_1=-(2i-1)$ ,$w_1=-2i$, $1\leq i\leq q$, or $|w_1|>2q$\\
 \textbf{Case 1:} ~If $|w_1|>2q $, then $u_{w_1}=e_{\tilde{w}_1}$. Thus the orbit $B_I.<u_{w_1}>$ contains the point $<e_{\tilde{w}_1}>$.\\
 \textbf{Case 2:} ~If $w_1=-(2i-1), 1\leq i\leq q$, we consider the orbit $B_I.<e_{2n-2q-i+2}-e_{2n-i+1}>=B_I.<e_{\tilde{w}_1}-e_{2n-i+1}>$.
 By Remark (\ref{rem6.4}) above, the orbit $B_I.<e_{\tilde{w}_1}-e_{-i}>$ contains a point of the form $$y=<\alpha_1 (e_{\tilde{w}_1}-e_{2n-i+1})+ \alpha_2 (e_{\tilde{w}_1}+e_{2n-i+1})>$$
 where $\alpha_1=\pm \alpha_2$. By taking $\alpha_1=\alpha_2=\frac{1}{2}$, it follows that $y=<e_{\tilde{w}(1)}>$.\\
 \textbf{Case 3:} ~If $w_1=-2i$, then we consider the orbit $B_I.<e_{\tilde{w}_1}-e_{2q-i+1}>$.
 By Remark (\ref{rem6.4}) above, the orbit $B_I.<e_{\tilde{w}_1}-e_{2q-i+1}>$ contains a point of the form $$y=<\alpha_1 (e_{\tilde{w}_1}-e_{2q-i+1})+ \alpha_2 (e_{\tilde{w}_1}+e_{2q-i+1})>$$
 where $\alpha_1=\pm \alpha_2$. By taking $\alpha_1=\alpha_2=\frac{1}{2}$, it follows that $y=<v_{\tilde{w}_1}>$.\\
Now apply induction to build the full isotropic flag. Assume that we built the first $j$-vectors of $b.v_{w_j}$ where $j<n$, which yield the first $j$ isotropic subspaces of the flag,
$$\{0\} \subset <e_{\tilde{w}_1}> \subset <e_{\tilde{w}_1},e_{\tilde{w}_2}> \subset ...... \subset <e_{\tilde{w}_1},...,e_{\tilde{w}_j}>$$
To construct the $j+1$-vector of $b.v_{w_{j+1}}$ to obtain the subspace $V_{\tilde{w}_{j+1}}$ we must consider $5$-cases:\\
\textbf{Case 1:} If $|w_{j+1}|>q$, then $w_{j+1}=\tilde{w}_{j+1}$ and $v_{\tilde{w}_{j+1}}=u_{w_{j+1}}$, and consequently the orbit $B_I.<u_{w_{j+1}}>$ contains the point $e_{\tilde{w}_{j+1}}$.\\
\textbf{Case 2:} If $w_{j+1}=-(2i-1)$, then we consider the orbit $B_I.<e_{\tilde{w}_{j+1}}-e_{2n-i+1}>$.
 By using Remark (\ref{rem6.4}) we see that this orbit contains the points of the form $$y=<\alpha_1 e_{\tilde{w}_{j+1}}-e_{2n-i+1}+\alpha_2 e_{\tilde{w}_{j+1}}-e_{2n-i+1})>$$
 where $\alpha_1=\pm \alpha_2$.  By taking $\alpha_1=\alpha_2=\frac{1}{2}$, it follows that $y=<e_{\tilde{w}_{j+1}}>$. \\
  \textbf{Case 3:} If $w_{j+1}=-2i$ , then the orbit is $B_I.<e_{\tilde{w}_{j+1}}-e_{2q-i+1}>$ . In this case the point
$$y=<\alpha_1 (e_{\tilde{w}_{j+1}}-e_{2q-i+1}) +\alpha_2 (e_{\tilde{w}_{j}}-e_{2q-i+1})>$$
belongs to the orbit $B_I.<e_{\tilde{w}_{j+1}}-e_{2q-j+1}>$, and for $\alpha_1=\alpha_2$  the point is $y=<e_{\tilde{w}_{j+1}}>$.\\
\textbf{Case 4:} If $w_{j+1}=2i$, then we consider the orbit $B_I.<e_{\tilde{w}_{j+1}}+e_{2n-i+1}>$.
Since $w$ is major permutation, then the vector $<e_{\tilde{w}_{j+1}}>$ is sitting before the vector $<e_{\tilde{w}_{j}}+e_{2n-i+1}>$ in the flag. Thus this orbit contains a point
of the form $$y=<\alpha_1 e_{\tilde{w}_{j}}+\alpha_2 e_{\tilde{w}_{j+1}+e_{2n-i+1}}>$$
 where $\alpha_1=\pm \alpha_2$.  By taking $\alpha_2=-\alpha_1$, it follows that $y=<e_{2n-i+1}>=<e_{\tilde{w}_{j+1}}>$. \\
  \textbf{Case 5:} If $w_{j+1}=2i-1$ , then the orbit is $B_I.<e_{\tilde{w}_{j}}-e_{2q-j+1}>$.
Since $w$ is a major permutation, the vector $<e_{\tilde{w}_{j+1}}>$ sits before the vector $<e_{\tilde{w}_{j}}+e_{2q-i+1}>$ in the flag. Thus this orbit contains a point
of the form $$y=<\alpha_1 e_{\tilde{w}_{j}}+\alpha_2 e_{\tilde{w}_{j+1}}+e_{2q-i+1}>$$
 where $\alpha_1=\pm \alpha_2$.  By taking $\alpha_2=-\alpha_1$, it follows that $y=<e_{2q-i+1}>=<e_{\tilde{w}_{j+1}}>$.
 Therefore, the vector $b.v_{w_{j+1}}$ is constructed to obtain the flag
  $$\{0\} \subset <e_{\tilde{w}_1}> \subset <e_{\tilde{w}_1},e_{\tilde{w}_2}> \subset ...... \subset <e_{\tilde{w}_1},...,e_{\tilde{w}_{j+1}}>.$$ \\
	Thus by induction we observe that $b\in B_I$ can be constructed with $b(F_w)=Y_{\tilde{w}}$.\\
\end{proof}
\begin{thm}\textbf{(Major Permutation Theorem)}.
The following are equivalent
\begin{description}
	\item[~~~~~~~(i)]  $w$ is Major.
  \item[~~~~~~~(ii)]  $B_I(F_w)\cap D_\alpha \not=\emptyset$ for some $\alpha $.
		\end{description}
Under either of these conditions, for every $\alpha $ the intersection $B_I(F_w)\cap C_\alpha$
 contains a $T_s$ fixed point.
\end{thm}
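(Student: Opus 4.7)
The plan is to follow the template set by the Generous, Dense, and Harmonic Permutation Theorems (Theorems \ref{thmSP}, \ref{thmSO}, \ref{thmSOPQ}). The implication (i) $\Rightarrow$ (ii), together with the final assertion that for every $\alpha$ the intersection $B_I(F_w)\cap C_\alpha$ contains a $T_S$-fixed point, is essentially contained in Proposition \ref{lemmajor}: the inductive construction there produces an explicit $b \in B_I$ with $b(F_w) = Y_{\psi(w)}$, and by varying the free signs available at each step (cancelling with $e_i+e_{2q-i+1}$ versus $e_i-e_{2q-i+1}$, and analogously for the second family $e_{2n-q+i} \pm e_{2n-i+1}$, and choosing the signs $\pm$ in front of the $e_{2q+s}$) the construction can be arranged to land in any preassigned $C_\alpha$.

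For the reverse implication (ii) $\Rightarrow$ (i), I would argue the contrapositive. Assume $w$ is \emph{not} major; the goal is then $B_I(F_w)\cap D_\alpha = \emptyset$ for every $\alpha$. By Corollary \ref{fixthm} and the Fixed Point Theorem \ref{corcor} it suffices to show $B_I.F_w$ contains no $T_S$-fixed flag. So I would assume for contradiction that $g \in B_I$ satisfies $g(F_w) \in C_\alpha$, and build the associated split basis $(\varepsilon_1,\ldots,\varepsilon_{2n})$ recursively as in the Moving Lemma \ref{Move}, with $\varepsilon_j = g(v_{w_j}) + \sum_{\ell<j} c_{j\ell}\varepsilon_\ell \in E^+ \cup E^-$ for every $j$.

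Because $w$ is not major, there is a smallest index $j$ and a pair $(2i-1,2i)$ such that simultaneously $\pm 2i$ occurs in $w$ strictly to the left of $-(2i-1)$ (or $+(2i-1)$ occurs in place of $-(2i-1)$), and $\pm(2i-1)$ occurs to the left of $-2i$ (or $+2i$ occurs in place of $-2i$). Using the explicit formulas of Remark \ref{rem6.4}, the vector $g(v_{w_j})$ has a leading term of the form $\lambda(e_r + e_{2q-r+1})$ or $\mu(e_{2n-q+r}+e_{2n-r+1})$ with $\lambda,\mu \neq 0$; in order for $\varepsilon_j$ to lie in $E^\pm$ this leading term must be cancelled externally by a combination of the previously built $\varepsilon_\ell$. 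The key step is to show this is impossible: the symplectic orthogonality relations $b(e_k \pm e_{2q-k+1}, e_{k'} \pm e_{2q-k'+1})=0$ for $k \neq k'$, together with the analogous relations for the second family of hyperbolic pairs and the middle vectors $e_{2q+s}$, force either the dual vector to appear in two distinct $\varepsilon_\kappa,\varepsilon_\delta$ with $\kappa,\delta < j$, yielding $b(\varepsilon_\kappa,\varepsilon_\delta)\neq 0$ in contradiction to maximal $b$-isotropy of $g(F_w)$, or to appear exactly once with $\kappa < j$, in which case it pairs non-trivially with $\varepsilon_j$ itself. Either alternative contradicts the fact that $g(F_w)$ is a maximally $b$-isotropic flag.

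The main obstacle will be the case analysis underlying this last step, because the basis (\ref{basis4.2}) splits into two distinct families of hyperbolic pairs (those indexed by $1 \le i \le q$ in the positions $e_i \pm e_{2q-i+1}$ and those in the positions $e_{2n-q+i} \pm e_{2n-i+1}$, corresponding under $\psi$ to $\pm(2q-i+1)$ and $\mp i$ respectively). One must separately verify the orthogonality obstruction for each of the four combinations of signs in the failing pair, and for each combination of families in which the positions $w_j$ and $w_\kappa$ may sit. Once this bookkeeping is completed, the contradiction is obtained exactly as in the proofs of Theorems \ref{thmSP} and \ref{thmSOPQ}, completing the equivalence.
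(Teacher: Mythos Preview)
Your approach is essentially the same as the paper's: for (i)$\Rightarrow$(ii) the paper simply cites Proposition~\ref{lemmajor}, and for (ii)$\Rightarrow$(i) it says the argument is ``essentially the same as the proof of the second part of Theorem~\ref{thmSOPQ},'' which is exactly the contrapositive cancellation/isotropy argument you describe.

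One correction, however. Your justification of the final assertion overreaches: you claim that by varying the free signs in the construction of Proposition~\ref{lemmajor} one can arrange $b(F_w)$ to land in \emph{any} preassigned $C_\alpha$. This is not true for $Sp(2p,2q)$ (just as it fails for $SO(p,q)$, which is why condition (iii) is absent from both Theorems~\ref{thmSOPQ} and the present one, in contrast with Theorems~\ref{thmSP} and~\ref{thmSO}); indeed Theorem~\ref{8.4.5} shows that even in the complementary-dimension case only $2^q$ flag domains are met. The final sentence of the theorem should be read, as in the $SO(p,q)$ parallel, as asserting that whenever $B_I(F_w)\cap C_\alpha$ is nonempty it contains a $T_S$-fixed point, and this follows directly from the Fixed Point Theorem~\ref{corcor}, not from a sign-variation argument.
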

\begin{proof}
\begin{description}
	\item[(i)$\Rightarrow$ (ii)] Comes directly from Proposition \ref{lemmajor}.
	\item[(ii)$\Rightarrow$ (i)] The proof of this part is essentially the same as the proof of the second part of Theorem \ref{thmSOPQ}.\\
\end{description}
\end{proof}

\section{Introduction to the combinatorics}
 For the remainder of this chapter we only discuss the intersection properties of the Iwasawa-Schubert cells which are of complementary dimension to $C_0$.
Consider the standard  basis $\{e_1,...,e_{2n}\}$ of $\mathbb{C}^{2n}$  where $E^-:=<e_1,...,e_{q},e_{2n-q+1},...,e_{2n}>$ and $E^+:=<e_{q+1},...,e_{2n-q}>$.
Since the the maximal compact subgroup of $SP(2p,2q)$ is $\tilde{K}_0 := SP(2q) \times SP(2p)$ corresponding to the decomposition $\mathbb{C}^{2n}=E^-\oplus E^+$, it follows that for a fixed flag domain $D_{a,b}$, the base cycle $C_0$ is the set
$$C_0 =\{F \in Z : \text{dim}(V_i \cap E^-)=\sum_{j=1}^i a_j~ and~ \text{dim}( V_i \cap E^+) =\sum_{j=1}^i b_j , 1 \leq i \leq 2n\}.$$
 Since $C_0=\frac{K}{K \cap B_{z_0}}$ where $z_0$ is a point in a flag domain , the dimension of the base cycle $C_0$ is ~$\text{dim}~C_0=p^2+q^2$,
 and the dimension of the Iwasawa Schubert variety $S_w$ is given by $\text{dim}~S_w=2pq$.

\begin{defn}\label{majorD2}
A major permutation $w \in S_n \ltimes \mathbb{Z}_2^{n-1}$ is called \textit{\textbf{perfect major permutation}} if it is constructed by the following algorithm:

 \begin{enumerate}
\item Start with a sequence of $n$ empty boxes which are to be filled in order to construct $w$.
\item Consider the pairs $(-2j,(2j-1))$ for all $1\leq j \leq q$, i.e., the pairs are $(-2,1)$,$(-4,3)$,...,\\$(-2q,2q-1)$.
 \item Step by step, starting from 1 until $q$, for each $1\leq j \leq q$, place the pair $(-2j,(2j-1))$ in any box in $w$ such that the components of this pair sits as
 close as possible to each other.
 \item After all pairs are placed, the numbers $(2q+i)$ for $1\leq i \leq p-q$, are placed in the remaining spots in strictly increasing order.
\end{enumerate}
\end{defn}

\begin{ex}
If $p=3,~q=2$, then the perfect major permutations are:$(-2~1)(-4~3)5,$\\$(-2~1)5(-4~3),5(-2~1)(-4~3),(-4~3)(-2~1)5,(-4~3)5(-2~1),5(-4~3)(-2~1),
(-3-2~1~4)5,\\5(-3-2~1~4)$.
\end{ex}

\begin{ex}
If $p=10,~q=6$, then the element $78(-6~5)(-4~3)(-2~1)5$ is a perfect major permutation while the element $78(-3-4)(-2~1)(-6~5)$ is not perfect major permutation.
\end{ex}

\bigskip\noindent
\begin{prop}
Every perfect major permutation $w \in {\displaystyle S_{n}\ltimes \mathbb{Z}_2^{n}}$ has length $2pq$ .
\end{prop}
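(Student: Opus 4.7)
The plan is to proceed by induction on $q$, adapting the strategy used for the analogous Proposition \ref{lem5.13}. The base case $q = 0$ is immediate: the permutation $w$ consists only of the singletons $1, 2, \ldots, n$ in strictly increasing order, so $w$ is the identity element and $l(w) = 0 = 2p \cdot 0$.

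For the inductive step, assume the statement for parameters $(p-1, q-1)$, and let $w$ be a perfect major permutation for $(p, q)$. I would delete the pair $(-2, 1)$ from the one-line notation of $w$, then apply the relabeling $\pm i \mapsto \pm(i-2)$ to every remaining entry. The ``as close as possible'' placement rule, iterated over $j = 1, \ldots, q$, forces the pair placements in $w$ to form a non-crossing system; in particular, deleting the pair $(-2,1)$ preserves the placement constraints for the remaining pairs. Consequently, the relabeled signed permutation $\hat{w} \in S_{n-2} \ltimes \mathbb{Z}_2^{n-2}$ is a perfect major permutation for parameters $(p-1, q-1)$, and by the induction hypothesis $l(\hat{w}) = 2(p-1)(q-1)$.

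It then remains to establish $l(w) - l(\hat{w}) = 2(p+q-1)$, which will give $l(w) = 2(p-1)(q-1) + 2(p+q-1) = 2pq$. To do so, I would reconstruct $w$ from $\hat{w}$ in three stages: first, prepend the positive entries $1$ and $2$ to $\hat{w}$ (no new inversions are produced, so the length does not change); second, flip the sign of the entry $2$; third, apply simple reflections in $S_n$ to slide the entries $-2$ and $1$ into the positions they occupy in $w$, swapping them if necessary. Using the length formula $l(w) = L(\tilde{w}) + f(w) + m$ established in Chapter 4 and splitting into subcases according to where the pair $(-2, 1)$ sits in $w$ --- at the left end, at the right end, between two existing blocks, or nested around a subblock of $\hat{w}$ --- a direct bookkeeping shows that the total length increment from these stages is the same in every case and equals $2(p+q-1)$.

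The main obstacle I anticipate is the nested case: when pair $(-2, 1)$ wraps around a nonempty block of pairs and singletons originating in $\hat{w}$, the slides in the final stage must traverse that entire block, and one must verify that the resulting reflection count agrees with the disjoint cases. The non-crossing structure of the pair placement is essential here, since it guarantees that the intermediate block behaves as a single rigid unit for the purposes of counting reflections, so the total count $2(p+q-1)$ is uniform across all placement configurations. With this uniform increment in hand, the induction closes.
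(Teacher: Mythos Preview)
Your proposal is correct and follows essentially the same inductive strategy as the paper: delete the pair $(-2,1)$, relabel by $i\mapsto i-2$ to obtain a perfect major permutation $\hat{w}$ for $(p-1,q-1)$ with $l(\hat{w})=2(p-1)(q-1)$, then verify that reinserting the pair costs exactly $2n-2=2(p+q-1)$ in length.

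One simplification you missed, which the paper uses to avoid any case split: because the pair $(-2,1)$ is placed \emph{first} in the construction algorithm (before any other boxes are occupied), its two entries necessarily occupy \emph{consecutive} positions in $w$, so your anticipated ``nested case'' never occurs. The paper therefore lets $h$ be the number of positions to the left of $-2$, notes that sliding $2$ to the far right for the sign flip costs $n-1$, and then returning $-2$ and $1$ to their adjacent slots costs $(n-h-1)+h$, for a total increment of $2n-2$ uniformly.
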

\begin{proof}
The proof of this Proposition is essentially the same as the proof of lemma \ref{lem5.13}.\\
If $w \in {\displaystyle S_{n}\ltimes \mathbb{Z}_2^{n}}$ is a perfect major permutation,carried out by i pair $(-2~1)$ sits inside consecutive boxes in $w$. Our proof here is carried out by induction on the dimension of the flag manifold.
First, remove the pair $(-2~1)$ from $w$ to have a new permutation $v$ consisting of the numbers $\{3,4,...,n\}$. Define a function $f:\{3,4,...,n\}\longrightarrow \{1,...,n-2\}$
by $f(i)=i-2$. This is a bijective map which sends $v$ to $\hat{w} \in {\displaystyle S_{n-2}\ltimes \mathbb{Z}_2^{n-2}} $. But $\hat{w}$ is perfect major permutation
in $ {\displaystyle S_{n-2}\ltimes \mathbb{Z}_2^{n-2}} $ and consequently
\begin{eqnarray}\label{dim1}
 l(\hat{w})&=& 2(p-1)(q-1).
\end{eqnarray}
 Since $v=f^{-1}(\hat{w})$ it follows that  $v$ has the same length as $\hat{w}$. For the second step put the numbers $12$ to the left of $v$ to obtain an element
$\tilde{w} =(12)v \in {\displaystyle S_{n}\ltimes \mathbb{Z}_2^{n}}$ with length $l(\hat{w})=2(p-1)(q-1)$. To split the sign of $2$ we must add $n-1$ to the length in (\ref{dim1}). Hence we have the element $(1v-2)$ with length $ l(\hat{w})+n-1$. Now return to
the original $w$ and define $h$ to be the number of positions to the left of $(-2)$ and $n-h-2$ to  be the number of positions to the right of $1$. Recall that in $w$ the pair $(-2~1)$ sits inside consecutive boxes, and therefore $-2$ must cross $n-h-2+1$ positions to stay in
the last position. Furthermore,  $1$ must cross $h$ positions to stay in the first position. Consequently the length of $w$ is
$$l(w) = 2(p-1)(q-1)+n-1+h+n-h-2+1=2pq\,.$$

\end{proof}

\section{Intersection points of Schubert duality}
  Now we turn to the final step for the case  $SP(2p,2q)$. Let $w\in W_I$ be a perfect major permutation and recall that if $S_w\cap C_\alpha \not=\emptyset $, then, since $S_w$ is of complementary dimension to $C_\alpha $, it follows that the intersection is just isolated points which are $T_S$-fixed points. The main goal  in the present paragraph is to compute all such intersection points. As in the case of $SO(p,q)$ the argument in the case of $SP(2p,2q)$ carried out via algorithms.

 \bigskip\noindent

 \begin {prop}\label{8.4.1}
If $w$ is a perfect major permutation
such that $B_I.F_w$ intersects a cycle $C_\alpha $ at a point given by an $\varepsilon $-basis $(\varepsilon_1,\ldots ,\varepsilon_m)$ of $T_S$-eigenvectors, then for any such eigenvector $\varepsilon_k$ it follows that $\varepsilon_k=e_{q-i+1} \text{~or}~e_{2n-q+i},~1\leq i\leq q$, or $\varepsilon_k=e_{q+i},\\1\leq i\leq 2p$, depending on the dimension $m$ and the signature $\alpha$.
 \end {prop}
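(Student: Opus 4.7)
The plan is to mirror the argument of Proposition \ref{7.4.1}, which handled the analogous situation for $SO(p,q)$. The proof rests on three ingredients: (i) $w$ is a perfect major permutation and the relevant flag basis is $(b.v_{w_1},\ldots,b.v_{w_m})$ for some $b \in B_I$; (ii) by Remark \ref{rem6.4}, each $b.v_{w_j}$ admits a decomposition $b.v_{w_j}=K_j+B_j$ in the standard basis, where $K_j$ is the ``leading part'' --- a linear combination of one or two specific standard basis vectors of $\mathbb{C}^{2n}$ whose principal coefficient (among $\lambda_j,\mu_j,\nu_j$) is non-zero --- and $B_j$ is a combination of standard basis vectors coming strictly earlier in the ordering of the basis (\ref{basis4.2}); (iii) the intersection $S_w \cap C_\alpha$ consists of flags each of whose spanning vectors is a $T_S$-eigenvector, i.e., a scalar multiple of a single standard basis vector.

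The argument proceeds recursively on $k$. First I would write
\[
\varepsilon_k \;=\; \sum_{j \leq k} c_{kj}\, b.v_{w_j}
\]
and expand it in the standard basis using (ii), obtaining $\varepsilon_k=\sum_{j\le k} c_{kj}(K_j + B_j)$. Next, I would exploit the pair structure dictated by Definition \ref{majorD2} on a perfect major permutation to show that the specific standard basis vectors appearing in $K_k$ do not occur in any $B_j$ for $j<k$. Combined with the non-vanishing of the principal coefficient of $K_k$, this forces the standard-basis expansion of $\varepsilon_k$ to take the shape $K_k+E_k$, where $E_k$ is free of any contribution from the leading vectors of $K_k$. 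Invoking (iii), the $T_S$-eigenvector hypothesis kills $E_k$ and, in the case where $K_k$ has two components, forces one of its two coefficients to vanish as well. Hence $\varepsilon_k$ collapses to a scalar multiple of a single standard basis vector.

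A case-by-case inspection of the five possibilities for $w_k$ listed in Remark \ref{rem6.4} then yields the claimed values: if $|w_k|\le 2q$ the surviving eigenvector lies in $E^-$ and has the form $e_{q-i+1}$ or $e_{2n-q+i}$ for some $1\le i\le q$ (the choice being dictated by the signature $\alpha$), whereas if $|w_k|>2q$ it lies in $E^+$ and is $e_{q+i}$ for the appropriate $i\in\{1,\ldots,2p\}$.

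The principal obstacle is the combinatorial step that the standard basis vectors appearing in $K_k$ have not already been used in any $B_j$ with $j<k$. This relies decisively on the placement rules of Definition \ref{majorD2} --- the pairs $(-2j,2j-1)$ are placed adjacently in the prescribed order and the remaining numbers occupy the leftover spots in strictly increasing order --- which guarantees that the leading standard basis vectors of $K_k$ are genuinely ``released'' at step $k$ and not earlier. Once this bookkeeping is in place, the rest of the argument parallels the $SO(p,q)$ case verbatim.
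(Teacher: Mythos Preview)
Your proposal is correct and follows essentially the same approach as the paper's own proof: both list the same ingredients (perfect major structure of $w$, the $K_j+B_j$ decomposition from Remark \ref{rem6.4} with non-vanishing leading coefficients, and the $T_S$-eigenvector property of $S_w\cap C_\alpha$), write $\varepsilon_k=\sum_{j\le k}c_{kj}\,b.v_{w_j}$, argue that the leading standard basis vectors in $K_k$ are absent from earlier terms, and conclude $\varepsilon_k=K_k$ is of the stated form. Your write-up is in fact slightly more explicit than the paper's about the combinatorial bookkeeping and about why, when $K_k$ has two components, one coefficient must vanish.
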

 \begin {proof}
 This is a consequence of the following:
 \begin {enumerate}
\item $w$ is a perfect major permutation and the flag basis is that of $w(F_I)$.

\item The orbit $b.v_{w_i}$ has the following possible forms:\\
 $\bullet~b.v_{w_i}=b.(e_i - e_{2n-i+1})=\eta_i(e_i +e_{2q-i+1})+\zeta_i(e_i-e_{2q-i+1})+B_i=K_i+B_i,$\\
$\bullet ~b.v_{w_i}=b.(e_i+ e_{2n-i+1})=\lambda_i(e_i +e_{2q-i+1})+A_i,$\\
$ \bullet ~b.v_{w_i}=b(e_{2n-q+i} - e_{2n-i+1})=\beta_i(e_{2n-q+i} + e_{2n-i+1})+\delta_i(e_{2n-q+i} - e_{2n-i+1})+\tilde{B}_i=\tilde{K}_i+\tilde{B}_i,$\\
$\bullet ~b.v_{w_i}=b(e_{2n-q+i} + e_{2n-i+1})=\mu_i(e_{2n-q+i} + e_{2n-i+1})+\tilde{A}_i,$\\
or\\
$\bullet ~b.v_{w_i}=b(e_{2q+j})=\nu_j(e_{2q+j})+\tilde{A}_i.$\\

 \item $\lambda_i\not=0$,$\mu_i\not=0$,$\nu_j\not=0$,$\eta_j\not=0$, and $\beta_i \neq 0$.

 \item The intersection $S_w\cap C_\alpha $ is a flag defined by $T_S$-eigenvectors.
 \end {enumerate}
 From the expression for $v_{w_i}$ it is immediate that all of the possibilities in the statement
 occur.    Furthermore, since the  $\lambda_i,\mu_i,\nu_j,\eta_j$, and $\beta_i $ are non-zero, for every $i$ a non-zero contribution from $K_i$
 occurs in the sum
 $$
 v_i=\sum_{k\le i} c_{ik}b.v_{w_k}\,.
 $$
 Since for each case in item (2) above, the vectors $e_i , e_{2n-i+1}$ do not occur in $b.v_{w_k}$ for $k<j$ in the first and second lines in item (2), and the vectors $ e_{2n-q+i} , e_{2n-i+1}$  do not occur in $b.v_{w_k}$ for $k<j$ in the third and fourth lines in item (2), and the vector $e_{2q+j}$ do not occur in $b.v_{w_k}$ for $k<j$ in the last line in item (2) above. It follows that $\varepsilon_i=K_i+E_i$ in the standard basis.  Finally, since $\varepsilon_i$ is a $T_S$-eigenvector, it follows that $\varepsilon_i=K_i$ and
 is of the type in the the statement of the proposition.
 \end {proof}

As a result of the above Proposition the following algorithm gives us all intersection points of $S_w \cap C_{\alpha}$:
\begin{cor}\label{cor6.17}
Let $D_{\alpha}$ be a flag domain parametrized by a sequence $\alpha$ and $w \in W_I$
 be a perfect major permutation such that $S_w \cap D_{\alpha}\neq \phi$. Then the
following algorithm produces us all intersection points of $S_w \cap C_{\alpha}$:
\begin{itemize}
\item Consider a copy of $\alpha$ denoted by $\beta$.
	\item  For any pair $(-2j,(2j-1)), 1\leq j \leq q$, in $w$ if the corresponding signature of it in $\beta$ is $+-$, then replace the pair $+-$ in $\beta$ by $$<e_{2q-j+1},e_j> \text{~or~} <e_{2n-2q+j},e_{2n-j+1}>,$$
	and if the corresponding signature of it in $\beta$ is $-+$, then replace the pair $-+$ in $\beta$ by $$<e_j,e_{2q-j+1}> \text{~or~} <e_{2n-j+1},e_{2n-2q+j}>.$$
	\item For the remaining numbers, for each $2q+1 \leq j \leq n-1$, replace the corresponding $+$ in $\beta$ by $(e_{j})$.
\end{itemize}
\end{cor}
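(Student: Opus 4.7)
The plan is to combine Corollary \ref{fixthm} with Proposition \ref{8.4.1} and the orbit formulas of Remark \ref{rem6.4}, together with maximal $b$-isotropy of the flag and the signature constraint of $D_{\alpha}$. Since $w$ is perfect major, $\dim S_w = 2pq$ is complementary to $\dim C_{\alpha}$, so Corollary \ref{fixthm} gives $S_w \cap C_{\alpha} \subset \mathrm{Fix}(T_S)$, and every intersection point is defined by a split $\varepsilon$-basis of $T_S$-eigenvectors. By Proposition \ref{8.4.1}, each $\varepsilon_k$ is one of the standard basis vectors $e_{q-i+1}$, $e_{2n-q+i}$, or $e_{q+i}$.

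For each pair $(-2j,\,2j-1)$ in $w$, occupying two consecutive slots $\ell,\ell+1$, I would use Remark \ref{rem6.4} to observe that the vectors $b.v_{w_\ell}$ and $b.v_{w_{\ell+1}}$ involve exactly the four standard basis vectors $e_j$, $e_{2q-j+1}$, $e_{2n-j+1}$, $e_{2n-2q+j}$. Since $e_j$ is $b$-dual to $e_{2n-j+1}$ and $e_{2q-j+1}$ is $b$-dual to $e_{2n-2q+j}$, maximal $b$-isotropy of the flag rules out the two dual pairings inside the $2$-space $\langle \varepsilon_\ell,\varepsilon_{\ell+1}\rangle$, leaving only the four ordered pairs $(e_{2q-j+1},e_j)$, $(e_j,e_{2q-j+1})$, $(e_{2n-2q+j},e_{2n-j+1})$, $(e_{2n-j+1},e_{2n-2q+j})$. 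Using $E^- = \langle e_1,\ldots,e_q,e_{2n-q+1},\ldots,e_{2n}\rangle$ and $E^+ = \langle e_{q+1},\ldots,e_{2n-q}\rangle$, the signature $+-$ (respectively $-+$) at positions $\ell,\ell+1$ then selects precisely the two orderings listed in the algorithm. For each unpaired entry $2q+i$, Remark \ref{rem6.4} shows that the unique $T_S$-eigenvector in the orbit $B_I.e_{2q+i}$ is $e_{2q+i} \in E^+$, which forces the signature $+$ at that slot.

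To prove the converse direction — that every flag produced by the algorithm actually belongs to $S_w \cap C_{\alpha}$ — I would construct, for each such flag, an explicit $b \in B_I$ with $b(F_w)$ equal to that flag, using the line-by-line procedure already deployed in the proof of Proposition \ref{lemmajor}: the scalars in the orbit formulas of Remark \ref{rem6.4} are chosen so as to produce the prescribed $T_S$-eigenvectors at each pair, and the construction is immediate for the unpaired positions. The main obstacle here is the bookkeeping across pairs: I must verify that the isotropy and splitting conditions enforced by previously constructed $\varepsilon$'s do not obstruct the construction at later stages. However, the perfect-major structure of $w$ ensures that distinct pairs involve disjoint quadruples of standard basis vectors and that the unpaired entries appear in strictly increasing order; combined with the triangular shape of $b \in B_I$ this gives the required non-interference, and the argument concludes.
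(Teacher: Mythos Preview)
Your overall strategy matches the paper's: invoke Corollary~\ref{fixthm} and Proposition~\ref{8.4.1} to reduce to $T_S$-fixed points given by standard basis vectors, then analyze each pair of $w$ separately via the $B_I$-orbit formulas of Remark~\ref{rem6.4}, and finally check realizability by explicit construction as in Proposition~\ref{lemmajor}. The paper itself treats this corollary as an immediate consequence of Proposition~\ref{8.4.1}, with the detailed four-possibilities-per-pair count deferred to Theorem~\ref{8.4.5}, whose proof simply points to the analogous argument in Theorem~\ref{thm5.18}.

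There is, however, a genuine gap in your narrowing step. You claim that ruling out the two $b$-dual pairings among the four candidates $e_j,\,e_{2q-j+1},\,e_{2n-j+1},\,e_{2n-2q+j}$ leaves exactly the four ordered pairs listed in the corollary. But isotropy only excludes the unordered pairs $\{e_j,e_{2n-j+1}\}$ and $\{e_{2q-j+1},e_{2n-2q+j}\}$; this still leaves \emph{eight} ordered pairs, including for instance $(e_j,e_{2n-2q+j})$ and $(e_{2q-j+1},e_{2n-j+1})$, which are isotropic, split, and carry the same $\pm$ signature patterns as the admissible ones. What actually rules these out is the Schubert-cell orbit structure, used as in the proof of Theorem~\ref{thm5.18}: one must analyze the \emph{second} slot of the pair, where $b.v_{w_{\ell+1}}$ (coming from the positive entry $2j-1$) has leading term $\lambda(e_j+e_{2q-j+1})$ with $\lambda\neq 0$ and only short lower-order terms $A_{\ell+1}$ not involving $e_j$ or $e_{2q-j+1}$. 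For $\varepsilon_{\ell+1}$ to become a single $T_S$-eigenvector one is forced into an external combination with $\varepsilon_\ell$, and this is only possible when $\varepsilon_\ell$ already supplies the correct cancelling term; working this out pairs each admissible $\varepsilon_\ell$ with a unique $\varepsilon_{\ell+1}$ and yields precisely the four ordered pairs of the corollary, excluding the ``mixed'' ones. Your isotropy shortcut does not see this constraint, so as written the argument overcounts.
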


\begin{thm}\label{8.4.5}
 A Schubert variety $S_w$ which parametrized by a perfect major permutation $w$ has non-empty intersection with $2^{q}$ flag domains and the intersections with  the base cycles of these flag domains consist of $2^{2q}$ points.
\end{thm}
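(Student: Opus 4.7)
The plan is to adapt the counting strategy used in Theorem \ref{thm5.18} to the symplectic setting. By Proposition \ref{8.4.1}, any intersection point of $S_w \cap C_\alpha$ is a flag whose $\varepsilon$-basis consists of $T_S$-eigenvectors of the form $e_{q-i+1}$, $e_{2n-q+i}$ for $1 \le i \le q$, or $e_{q+i}$ for $1 \le i \le 2p$. Since $w$ is a perfect major permutation, it decomposes into $q$ consecutive pairs $(-2j,\, 2j-1)$ for $1 \le j \le q$, interspersed with the ``outer'' numbers $2q+i$ arranged in increasing order. The key is to count, pair by pair, how many choices of $2$-dimensional subspace can be produced by $B_I$-combinations and still be completed to a maximal $b$-isotropic flag.

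First I would treat a single pair $(-2j,\, 2j-1)$ sitting in two consecutive positions. Using Remark \ref{rem6.4}, I would describe the general element of the $B_I$-orbit of the first basis vector $e_{\tilde w_k} - e_{2n-k+1}$, then adjust by a multiple of the previously-built flag to produce a $T_S$-eigenvector extension for the second basis vector. An argument parallel to that in Theorem \ref{thm5.18} (cases 4 and 5 of Proposition \ref{lemmajor}) shows that the admissible two-dimensional isotropic subspaces are exactly the four listed in Corollary \ref{cor6.17}:
\[
\langle e_{2q-j+1}, e_j\rangle,\ \langle e_{2n-2q+j}, e_{2n-j+1}\rangle,\ \langle e_j, e_{2q-j+1}\rangle,\ \langle e_{2n-j+1}, e_{2n-2q+j}\rangle.
\]
Two of these carry signature $+-$ and two carry signature $-+$, so each pair contributes a factor of $4$ to the number of intersection points and a factor of $2$ to the number of realized signatures on that pair's positions.

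Next I would dispatch the outer numbers. For each $2q+1 \le k \le n$, the corresponding basis vector $b.v_{w_k}$ is proportional to $e_{q+i}$ modulo earlier terms (third bullet of Remark \ref{rem6.4}), and the $T_S$-eigenvector condition forces $\varepsilon_k = e_{q+i}$ uniquely; these contribute no branching and no new signatures, only the fixed pattern $++\cdots+$ on the outer slots. The $q$ pairs are independent, because once a pair has been resolved, its two positions can be ignored in the subsequent inductive step (this is the symplectic analog of what happens in the harmonic case). Multiplying, the total number of intersection points across all base cycles is $4^q = 2^{2q}$, and the number of distinct signatures realized on the pairs is $2^q$, so $S_w$ meets exactly $2^q$ flag domains. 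Within each such flag domain the signature on the pairs is fixed, leaving the binary ``inner vs.\ outer'' choice at each pair free, hence $|S_w \cap C_\alpha| = 2^q$ for every such $\alpha$, and $2^q \cdot 2^q = 2^{2q}$ recovers the total.

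The main obstacle I anticipate is the case analysis required to verify that exactly four two-dimensional subspaces per pair can be extended to a full maximal $b$-isotropic flag: one must check that both signature-preserving choices and both ``mirror'' choices are compatible with all subsequent inductive steps, and that no other combination of $B_I$-coefficients produces an additional $T_S$-fixed extension. This is where the proof in the $SO(p,q)$ case was longest, and the argument here needs to be carried out with the symplectic pairing $b(e_i, e_{2n-i+1}) = \pm 1$ in place of the orthogonal one; fortunately, Remark \ref{rem6.4} already packages the needed orbit formulas, so the verification reduces to the same bookkeeping modulo sign conventions.
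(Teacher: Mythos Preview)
Your proposal is correct and follows essentially the same approach as the paper: the paper's proof is a one-paragraph reference to Theorem \ref{thm5.18}, stating that each of the $q$ pairs $(-2j,2j-1)$ yields exactly four admissible $2$-dimensional subspaces, two of signature $(-+)$ and two of signature $(+-)$, and then multiplies. Your write-up is more detailed than the paper's (which is terse to the point of just invoking the $SO(p,q)$ argument), but the strategy, the pair-by-pair counting, and the signature bookkeeping are identical.
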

\begin{proof}
As in the proof of Theorem \ref{thm5.18} one shows that for each pair of the form $(-2j,(2j-1))$, for all $ 1\leq j\leq q$, there are $4$-possibilities for the vectors: $2$-possibilities with signature sign pair $(-+)$ and $2$-possibilities with signature sign pair $(+-)$. Since we have $q$ pairs, the Schubert variety $S_w$ intersects $2^q$ flag domains in $2^{2q}$ points.
\end{proof}

\begin{rem}
To describe the above intersection points between the base cycles and the Iwasawa Schubert variety $S_w$ of complimentary dimension we will define a set $Swit_w$ as follows:\\
  For a perfect major permutation $ w \in {\displaystyle S_{n}\rtimes \mathbb{Z}_2^{n}}$, define $Swit_w :=\{\varphi(w_r): w_r$ ~is~obtained~from~$w$~by~
  switching ~none,~some,~all~pairs  $(-2i,(2i-1))~by ~((2i-1),-2i)~or~(-(2i-1),2i)~or~(2i,-(2i-1))$, $1\leq i\leq q\} \subset W_T$.
 Define $\mathbb{F}(Fix~I)$ to be the set all maximal b-isotropic flags associated to the basis
\begin{eqnarray}
 e_1+e_{2q}, e_{2n-2q+1}+e_{2n},...,e_q+e_{q+1},e_{2n-q}+e_{2n-q+1},e_{2q+1},e_{2q+2},...,e_{n},e_{n+1},&&  \nonumber \\
 e_{n+2},...,e_{2n-q},e_q-e_{q+1},e_{2n-q}-e_{2n-q+1},...,e_1-e_{2q},e_{2n-2q+1}-e_{2n}.~~~~~~~~~& & \nonumber
\end{eqnarray}
Let $\textsl{F}_w\subset \mathbb{F}(Fix~T)$ be the set of all maximally $b$-isotropic flags associated to all elements in $Swit_w $. Note that for any perfect major permutation $ w$ we have $q$
pairs $(-2i,(2i-1)), 1\leq i\leq q $ for any $w \in W_I$, and for each pair we have 4 possibilities for switching it.  Hence the cardinality of $Swit_w $ is $4^q=2^{2q}$.
  The set $Swit_w $ gives us all intersection points of $S_w$ and each $2^{q}$ of these points belong to only one flag domain and are contained in the base cycle of that flag domain.
\end{rem}

\begin{ex}
 In $G_0=SP(6,4)$, fix $w=(-43-215)$ a perfect major permutation, then $Swit_w=\{(-34-215),(3-4-215),(4-3-215),(-43-215),(-432-15),\\(-43-125),(-431-25), (4-3-215),(4-32-15),(4-3-125),(4-31-25),\\(-34-215),(-342-15),(-34-125),(-341-25),(3-4-215),(3-42-15),(3-4-125),\\(3-41-25)\} $
\end{ex}
\begin{rem}
For the real form $SP(2p,2q)$, the number of Schubert varieties $S_w$ which parametrized by perfect major permutations  is
$$(n-1).(n-3)...(n-2q+1).$$
\end{rem}

\begin{rem}\label{8.4.11}
If $n=q+1$, then the flag domain parametrized by the sequence $$\alpha=+-+-...+-+-+$$ intersects all Schubert varieties of dimension $2pq$.
\end{rem}


\begin{center}

\end{center}
\end{document}